\definecolor{Gray}{gray}{0.9}
\providecommand{\keyword}[1]{{\footnotesize\textbf{\textit{Key words---}} #1}}
\pgfplotsset{compat=1.8}
\definecolor{rulecolor}{RGB}{0,71,171}
\definecolor{tableheadcolor}{gray}{0.92}
\colorlet{tableheadcolor}{gray!25} 
\colorlet{tablerowcolor}{gray!10} 
\DeclareFontFamily{OMX}{MnSymbolE}{}
\DeclareSymbolFont{MnLargeSymbols}{OMX}{MnSymbolE}{m}{n}
\DeclareFontShape{OMX}{MnSymbolE}{m}{n}{
    <-6>  MnSymbolE5
    <6-7>  MnSymbolE6
    <7-8>  MnSymbolE7
    <8-9>  MnSymbolE8
    <9-10> MnSymbolE9
    <10-12> MnSymbolE10
    <12->   MnSymbolE12
}{}
\DeclareFontShape{OMX}{MnSymbolE}{b}{n}{
    <-6>  MnSymbolE-Bold5
    <6-7>  MnSymbolE-Bold6
    <7-8>  MnSymbolE-Bold7
    <8-9>  MnSymbolE-Bold8
    <9-10> MnSymbolE-Bold9
    <10-12> MnSymbolE-Bold10
    <12->   MnSymbolE-Bold12
}{}
\let\llangle\@undefined
\let\rrangle\@undefined
\DeclareMathDelimiter{\llangle}{\mathopen}%
{MnLargeSymbols}{'164}{MnLargeSymbols}{'164}
\DeclareMathDelimiter{\rrangle}{\mathclose}%
{MnLargeSymbols}{'171}{MnLargeSymbols}{'171}
\newtheorem{theorem}{Theorem}[section]
\newtheorem{lemma}[theorem]{Lemma}
\newtheorem{proposition}[theorem]{Proposition}
\newtheorem{corollary}[theorem]{Corollary}
\theoremstyle{definition}
\newtheorem{definition}[theorem]{Definition}
\newtheorem{remark}[theorem]{Remark}
\newtheorem{example}[theorem]{Example}
\newtheorem{assumption}[theorem]{Assumption}
\newcommand{\Trm}{\mathrm{T}}
\newcommand{\Acal}{\mathcal{A}}
\newcommand{\Bcal}{\mathcal B}
\newcommand{\Fcal}{\mathcal{F}}
\newcommand{\Hcal}{\mathcal{H}}
\newcommand{\Mcal}{\mathcal{M}}
\newcommand{\Ncal}{\mathcal{N}}
\newcommand{\Ffrak}{\mathfrak{F}}
\newcommand{\Ascr}{\mathscr{A}}
\newcommand{\Cscr}{\mathscr{C}}
\newcommand{\Rscr}{\mathscr{R}}
\newcommand{\Wscr}{\mathscr{W}}
\newcommand{\Abf}{\mathbf{A}}
\newcommand{\Bbf}{\mathbf{B}}
\newcommand{\Cbf}{\mathbf{C}}
\newcommand{\Dbf}{\mathbf{D}}
\newcommand{\Ebf}{\mathbf{E}}
\newcommand{\Gbf}{\mathbf{G}}
\newcommand{\Hbf}{\mathbf{H}}
\newcommand{\Ibf}{\mathbf{I}}
\newcommand{\Mbf}{\mathbf{M}}
\newcommand{\Sbf}{\mathbf{S}}
\newcommand{\Tbf}{\mathbf{T}}
\newcommand{\Nbb}{\mathbb{N}}
\DeclareMathOperator{\Reg}{Reg}
\DeclareMathOperator{\graph}{graph}
\DeclareMathOperator{\diverg}{div}
\DeclareMathOperator{\dist}{dist}
\DeclareMathOperator{\spn}{span}
\newcommand{\set}[2]{\left\{\, #1 \  \textup{\textbf{:}}\  #2 \,\right\}}
\newcommand{\cl}[1]{\overline{#1}}
\newcommand{\dd}{\;\mathrm{d}}
\newcommand{\N}{\mathbb{N}}
\newcommand{\R}{\mathbb{R}}
\newcommand{\loc}{\mathrm{loc}}
\newcommand{\spt}{\mathrm{spt}}
\newcommand{\Sing}{\mathrm{Sing}}
\newcommand{\todown}{\downarrow}
\newcommand{\eps}{\epsilon}
\DeclareMathOperator{\Err}{Err}
\renewcommand{\eps}{\varepsilon}
\newcommand{\vphi}{\varphi}
\renewcommand*\env@matrix[1][*\c@MaxMatrixCols c]{%
    \hskip -\arraycolsep
    \let\@ifnextchar\new@ifnextchar
    \array{#1}}
\newcommand{\mres}{\mathbin{\vrule height 1.6ex depth 0pt width
        0.13ex\vrule height 0.13ex depth 0pt width 1.3ex}}
\def\vint_#1{\mathchoice%
    {\mathop{\kern 0.2em\vrule width 0.6em height 0.69678ex depth -0.58065ex
            \kern -0.8em \intop}\nolimits_{\kern -0.4em#1}}%
    {\mathop{\kern 0.1em\vrule width 0.5em height 0.69678ex depth -0.60387ex
            \kern -0.6em \intop}\nolimits_{#1}}%
    {\mathop{\kern 0.1em\vrule width 0.5em height 0.69678ex depth -0.60387ex
            \kern -0.6em \intop}\nolimits_{#1}}%
    {\mathop{\kern 0.1em\vrule width 0.5em height 0.69678ex depth -0.60387ex
            \kern -0.6em \intop}\nolimits_{#1}}}
\newcommand*{\RangeX}{%
    {%
        \mathpalette\@RangeOf{X}%
    }%
}
\newcommand*{\@RangeOf}[2]{%
    \sbox0{$\m@th#1\mathsf{#2}$}%
    \mathsf{#2}%
    \kern-\wd0 %
    \mkern2.75mu\relax
    \nonscript\mkern.25mu\relax
    \mathsf{#2}%
}
\newcommand{\aveint}[2]{\mathchoice%
    {\mathop{\kern 0.2em\vrule width 0.6em height 0.69678ex depth -0.58065ex
            \kern -0.8em \intop}\nolimits_{\kern -0.45em#1}^{#2}}%
    {\mathop{\kern 0.1em\vrule width 0.5em height 0.69678ex depth -0.60387ex
            \kern -0.6em \intop}\nolimits_{#1}^{#2}}%
    {\mathop{\kern 0.1em\vrule width 0.5em height 0.69678ex depth -0.60387ex
            \kern -0.6em \intop}\nolimits_{#1}^{#2}}%
    {\mathop{\kern 0.1em\vrule width 0.5em height 0.69678ex depth -0.60387ex
            \kern -0.6em \intop}\nolimits_{#1}^{#2}}}
\title[Rectifiability of flat singularities mod$(2Q)$]{Rectifiability of flat singular points for area-minimizing mod$(2Q)$ hypercurrents}
\date{\today}
\author[A. Skorobogatova]{Anna Skorobogatova}
\address{Department of Mathematics, Fine Hall, Princeton University, Washington Road, Princeton, NJ 08540, USA}
\email{as110@princeton.edu}
\begin{document}
	
\maketitle

\begin{abstract}
	Consider an $m$-dimensional area minimizing mod$(2Q)$ current $T$, with $Q\in\N$, inside a sufficiently regular Riemannian manifold of dimension $m + 1$. We show that the set of singular density-$Q$ points with a flat tangent cone is $(m-2)$-rectifiable. This complements the thorough structural analysis of the singularities of area-minimizing hypersurfaces modulo $p$ that has been completed in the series of works of De Lellis-Hirsch-Marchese-Stuvard and  De Lellis-Hirsch-Marchese-Stuvard-Spolaor, and the work of Minter-Wickramasekera.
\end{abstract}

\keyword{minimal surfaces, area minimizing currents, rectifiability, regularity theory, multiple valued functions, blow-up analysis, center manifold}

\allowdisplaybreaks

\section{Introduction and main results}
Suppose that $T$ is an $m$-dimensional integer rectifiable current supported in a complete $(m+\bar{n})$-dimensional $C^1$ Riemannian submanifold $\Sigma\subset \R^{m+n}$ without boundary (for which we use the notation $T\in \Rscr_m(\Sigma)$), and let $p \geq 2$ be a given integer. 

Given an open set $\Omega \subset \R^{m+n}$ we say that $T$ is \emph{area-minimizing} mod$(p)$ in $\Sigma\cap\Omega$ if it has minimal $m$-dimensional mass in its mod$(p)$ homology class within $\Omega\cap\Sigma$, namely
\[
\Mbf(T) \leq \Mbf(T + S) \quad \text{for each $S \in\Rscr_m(\Omega\cap\Sigma)$ with $[S] = \partial^p[R]$ for some $R \in \Rscr_m(\Omega\cap\Sigma)$},
\]
where $[S] \in \Rscr_m(\Omega\cap K) / \sim_p$, for the equivalence relation $\sim_p$ given by $T\sim_p S$ if $T = S$ mod $p$. Equivalently, this can be written as
\[
\Mbf^p([T]) \leq \Mbf^p([T] + \partial^p[R]) \qquad \text{for every $R\in \Rscr_m(\Omega\cap\Sigma)$},
\]
where $\Mbf^p([T])$ is the mass mod$(p)$ for the class $[T]$, defined by
\[
\Mbf^p([T]) \coloneqq \inf\set{t \geq 0}{\substack{\text{$\forall \ \eps > 0 \ \exists$ compact $K \subset \Sigma$, $S \in \Rscr_m(\Sigma)$} \\ \text{with $\Fcal^p_K(T,S) < \eps$, $\Mbf(S) \leq t + \eps$}}}.
\]

Given $S \in \Rscr_m(\R^{m+n})$ (not necessarily a representative mod$(p)$), we let $\|S\|_p$ denote the mod$(p)$ mass measure associated with $[S]$ when identifying $[S]$ with a vector-valued Radon measure. Note that if $S$ is a representative mod$(p)$, then $\|S\|_p$ agrees with the classical $m$-dimensional mass measure $\|S\|$ associated to $S$, induced by the vector-valued Radon measure $\vec{T}\|T\|$ identified with $T$. We will henceforth make the following underlying assumption:

\begin{assumption}\label{asm:modp1}
	$T\in \Rscr_m(\R^{m+n})$ is an $m$-dimensional representative mod$(p)$ in a $C^{3,\alpha_0}$ $(m+\bar{n})$-dimensional Riemannian submanifold $\Sigma \subset \R^{m+n}$ with $\alpha_0 \in (0,1)$. $T$ is area-minimizing mod$(p)$ in $\Sigma \cap \Bbf_{7\sqrt{m}}$ for some open set $\Bbf_{7\sqrt{m}}\subset\R^{m+n}$ containing $0$ and $\partial^p [T]\mres \Bbf_{7\sqrt{m}} = 0 \ \text{mod}(p)$. Note that $\Theta(T, x) \in \left[1,\frac{p}{2}\right]$ for $\|T\|$-almost every $x$. 
	
	We may assume that $\Sigma \cap \Bbf_{7\sqrt{m}}$ is the graph of a $C^{3,\alpha_0}$ function $\Psi_p : \Trm_p\Sigma \cap \Bbf_{7\sqrt{m}} \to \Trm_p\Sigma^\perp$ for every $p \in \Sigma\cap \Bbf_{7\sqrt{m}}$. We may further assume that
	\[
	\boldsymbol{c}(\Sigma,\Bbf_{7\sqrt{m}})\coloneqq\sup_{p \in \Sigma \cap \Bbf_{7\sqrt{m}}}\|D\Psi_p\|_{C^{2,\alpha_0}} \leq \bar{\eps},
	\]
	where $\bar\eps$ will be determined later. This in particular gives us the following uniform control on the second fundamental form $A_\Sigma$ of $\Sigma$:
	\[
	\Abf_\Sigma \coloneqq \|A_\Sigma\|_{C^0(\Sigma\cap \Bbf_{7\sqrt{m}})} \leq C_0\boldsymbol{c}(\Sigma,\Bbf_{7\sqrt{m}}) \leq C_0 \bar\eps.
	\]
\end{assumption}

Given $T$ satisfying Assumption \ref{asm:modp1}, a point $p\in \spt T$ is called an (interior) regular point if there is a ball $\Bbf_r (p)$ in which $\spt T$ is an {\em embedded} submanifold of $\Sigma$ without boundary in $\Bbf_r (p)$. Its complement in $\spt T\setminus \spt (\partial T)$ is called the (interior) singular set and will henceforth be denoted by $\Sing (T)$.

Understanding the size and the structure of the singularities of $T$ in this setting was a problem first studied by Federer \cite{Federer1970} in the case $p=2$, namely, unoriented surfaces. There, it was shown that the Hausdorff dimension of the singular set is at most $m-2$, while in \cite{Simon_rectifiability, Simon_cylindrical}, Simon subsequently improved this to $(m-2)$-rectifiability and local finiteness of $(m-2)$-dimensional Hausdorff measure. Furthermore, J. Taylor~\cite{JTaylor} handled the case when $m=2$ and $p=3$ in three-dimensional ambient Euclidean space, achieving a groundbreaking structural result demonstrating that the only singularities models are superpositions of three half-planes meeting along an axis at angles of $\frac{2\pi}{3}$, and the singularities are locally a $C^{1,\alpha}$ perturbation of this. 

In light of a stratification of the singular set based of the maximal number of directions of translation-invariance of any tangent cone, the biggest obstruction to understanding the size and structure of the singularities is due to the presence of singular points with \emph{flat tangent cones} (of multiplicity at least two).

When $p$ is odd, in the work \cite{DLHMS} of De Lellis, Hirsch, Marchese and Stuvard, the authors demonstrate that the singular set of $T$ is $(m-1)$-rectifiable. The key is that in higher codimension, singularities at which $T$ admits a flat tangent cone \emph{can} arise, but the result \cite{DLHMS}*{Theorem 1.7} implies that they will necessarily have (positive integer) density \emph{strictly smaller than} $\frac{p}{2}$. Thus, such flat singularities can be dealt with inductively on the density $Q$, and we need not consider them; we only handle the highest density singular points with $Q = \frac{p}{2}$. 

When the codimension $\bar{n}=1$, the work of White~\cite{White86} tells us that any point $x \in \spt T$ with a flat tangent cone $k\llbracket \pi \rrbracket$ with integer multiplicity $k \in \left(-\frac{p}{2}, \frac{p}{2}\right)$ is necessarily regular. When $p$ is odd, this tells us that \emph{in codimension one}, there are no flat singular points. Making crucial use of this result, Taylor's structure theorem was successfully generalized in codimension one by De Lellis, Hirsch, Marchese, Spolaor and Stuvard in~\cite{DLHMSS_odd_moduli_p}, where it was demonstrated that when $p$ is odd, outside of an $(m-2)$-rectifiable set, the singular set of $T$ is locally a $C^{1,\alpha}$ $(m-1)$-dimensional submanifold, with a singularity model consiting of a superposition of $m$-dimensional half-spaces meeting in an $(m-1)$-dimensional axis.  

However, when $p$ is even, one cannot rule out the appearance of singular points of density $Q = \frac{p}{2}$ with a flat tangent cone (which we will henceforth refer to as \emph{flat singular $Q$-points}, and denote by $\Ffrak_Q (T)$). A prototypical example is as follows.

\begin{example}[cf. Figure \ref{fig:ex1} below]\label{ex:1}
	Let $f: \R^2 \to \R$ be the map $f \equiv 0$ and let $g: \R^2 \to \R$ be a (non-trivial) solution of the minimal surface equation 
	\[
	\diverg\left(\frac{\nabla g}{\sqrt{1+|\nabla g|^2}}\right) = 0,
	\]
	with $g(0) = \nabla g(0) = 0$. Let $T$ be the two-dimensional area-minimizing current mod(4) given by
	\[
	T\coloneqq \llbracket \graph (f) \cup \graph (g) \rrbracket,
	\]
	with alternating orientations in the regions between the intersections of the two graphs, to ensure that $\partial^4 [T] \mres \Bbf_1 = 0$.
	
	\begin{figure}[ht]
		\includegraphics[scale=0.4]{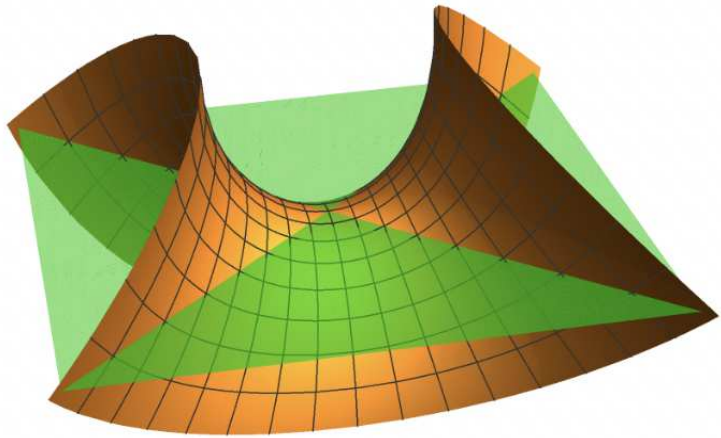}
		\caption{The graphs of $f$ and $g$ in Example \ref{ex:1}.}
		\label{fig:ex1}
	\end{figure}
	
	The origin is an isolated flat singular point of density $2$ here, meanwhile the curve segments $(\graph (f) \cap \graph (g)) \setminus \{0\}$ consist of singular points of density $2$, at each of which there is a (unique) ``open book" blowup that is the union of $4$ half-planes meeting in a line, with all of the orientations directed towards this line, to ensure that it is counted with multiplicity $4$, and thus does not create a boundary mod$(4)$.
\end{example}

In the recent works \cite{DLHMSS_structure} and \cite{MW21}, it was shown that at under the assumption that $\bar{n} = 1$ (namely, in codimension one) and with $p=2Q$ for $Q \in \Nbb$, at all flat singular $Q$-points of $T$ the flat tangent cone is unique and there is a polynomial decay rate of $T$ towards this flat tangent cone. In \cite{MW21}, the authors also establish $Q$-valued $C^{1,\alpha}$-graphicality (in a suitable sense) of $T$ locally around such points. In \cite{DLHMSS_structure}, it is additionally shown that the Hausdorff dimension of $\Ffrak_Q (T)$ is at most $m-2$, and up to a set of Hausdorff dimension at most $m-2$, the singular set of $T$ is locally a $C^{1,\alpha}$ $(m-1)$-dimensional submanifold (cf. the above discussion in the case when $p$ is odd). Note that these results heavily rely on the codimension one assumption, which allows one to classify the possible degrees of homogeneity of solutions to the linearized problem (see \cite{DLHMSS_structure}*{Proposition 2.9}). Unlike in the case when $p$ is odd, however, the authors in~\cite{DLHMSS_structure} were unable to easily establish $(m-2)$ rectifiability of the lower-dimensional part of the singular set, due to the presence of flat singularities.

In this article, we adapt the techniques developed in~\cite{DLMSV,DLSk2} in the context of higher codimension area-minimizing surfaces, in order to indeed demonstrate the $(m-2)$-rectifiability and local finiteness of $(m-2)$-dimensional Hausdorff measure for the flat singular set of $T$ when $\bar{n}=1$ and $p$ is even:

\begin{theorem}\label{thm:modpmain}
	Let $T$ satisfy Assumption~\ref{asm:modp1} (c.f.~\cite{DLHMS}*{Assumptions 17.5}) with $\bar n = 1$ and $p = 2Q$ for some $Q \in \N$. If the parameters in~\cite{DLHMS}*{Assumption~17.11} are chosen appropriately, then $\Ffrak_Q (T)$ is $(m-2)$-rectifiable.
\end{theorem}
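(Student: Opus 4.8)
The plan is to follow the strategy pioneered for higher-codimension area-minimizing currents in \cite{DLMSV} and refined in \cite{DLSk2}, transplanting it to the mod$(2Q)$ hypercurrent setting, where the uniqueness of flat tangent cones and the $Q$-valued $C^{1,\alpha}$ graphicality at flat singular $Q$-points from \cite{DLHMSS_structure} and \cite{MW21} are available as black boxes. First I would recall the quantitative stratification/Almgren-type frequency function machinery: around a flat singular $Q$-point $x \in \Ffrak_Q(T)$, after subtracting the center manifold and passing to the normal multivalued approximation $N$, one has a well-defined monotone frequency function $I_{x,N}(r)$. The key reduction is to show that $\Ffrak_Q(T)$ decomposes (up to $\Hcal^{m-2}$-null sets) into countably many pieces on which the frequency is bounded and ``pinched'' — this is where the codimension-one classification of homogeneous solutions to the linearized system (\cite{DLHMSS_structure}*{Proposition 2.9}) is essential, since it forces the relevant blow-ups to be homogeneous of degree one with an $(m-1)$-dimensional spine, and in particular rules out the degenerate behavior that obstructs rectifiability in general higher codimension.

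The second and central step is the construction and analysis of an \emph{$L^2$-type Lipschitz selection/approximation} of the map $x \mapsto $ (the blow-up of $N$ at $x$), and the proof of a one-sided ``spine–excess'' comparison: at $\Hcal^{m-2}$-a.e.\ flat singular $Q$-point one has a unique tangent map with $(m-2)$-dimensional spine, and the non-uniqueness set at scale $r$ — the set of $x$ at which the tangent plane/tangent map has not yet stabilized to within $\eta$ — has small measure, controlled by the Dirichlet energy drop across dyadic scales. Concretely, I would prove a \emph{rectifiable Reifenberg}-type statement: define $\mu = \Hcal^{m-2}\mres \Ffrak_Q(T)$ (or a suitable Frostman-type measure supported on it), and show that the $L^2$ $\beta$-numbers $\beta_\mu(x,r)^2 = \inf_{L} r^{-(m-2)}\int_{\Bbf_r(x)} \dist(y,L)^2\, d\mu(y)$, infimum over affine $(m-2)$-planes $L$, satisfy a summable Carleson/Jones-type estimate $\sum_k \beta_\mu(x, 2^{-k})^2 \leq C < \infty$ for $\mu$-a.e.\ $x$. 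The comparison between $\beta_\mu$ and the normalized Dirichlet excess of $N$ at $x$ over the ball of radius $r$ is carried out exactly as in \cite{DLMSV,DLSk2}: the ``no-holes'' property (every flat singular $Q$-point carries density exactly $Q$, so the singular set is not too sparse near a flat point), together with the almost-monotonicity of the frequency and the Hardt–Simon / De Lellis–Spolaor decay toward the (unique) flat tangent cone, yields pointwise and integral control of $\beta_\mu(x,r)$ by $\sum_{j \geq k}\bigl(I_{x,N}(2^{-j}) - \lim_{s\to 0} I_{x,N}(s)\bigr)$-type quantities, whose sum telescopes.

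Once the summable-$\beta$ estimate is in hand, the \emph{rectifiable Reifenberg theorem} of Naber–Valtorta (as used in \cite{DLMSV}) applies directly and outputs that $\mu$-a.e.\ point lies on a single Lipschitz $(m-2)$-graph, giving both $(m-2)$-rectifiability and the local finiteness $\Hcal^{m-2}(\Ffrak_Q(T)\cap \Bbf_r) < \infty$; strictly speaking one first runs this on the ``top-frequency'' stratum and then handles lower-frequency strata by the same argument with the induction on the number of spine directions. I expect the main obstacle to be precisely the frequency-pinching / no-holes analysis in Step 2: in the mod$(2Q)$ setting the multivalued normal approximation $N$ need not have average zero and the cancellation of orientations (as in Example~\ref{ex:1}) complicates both the construction of the center manifold–based approximation and the lower bound on $\mu$ near flat points; one must check carefully that the machinery of \cite{DLHMS} (whose center manifold and frequency function are built for exactly this mod$(p)$ situation) feeds correctly into the \cite{DLMSV}-style estimates, in particular that the ``splitting-before-tilting'' and the decay-of-the-excess estimates survive the identification of $T$ with a $Q$-valued graph over the center manifold without the usual $\sum \llbracket N_i \rrbracket$ having vanishing barycenter. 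Verifying the compatibility of these two machineries — De Lellis–Hirsch–Marchese–Stuvard's mod$(p)$ regularity package and the Naber–Valtorta rectifiable-Reifenberg / De Lellis–Marchese–Spolaor–Valtorta scheme — is the technical heart of the argument.
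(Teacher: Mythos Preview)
Your overall framework is correct and matches the paper's: reduce to a single center manifold, set up the frequency function for the $\Ascr_Q$-valued normal approximation, control Jones' $\beta_{2,\mu}^{m-2}$ coefficients by frequency pinching, and finish with a Naber--Valtorta covering/discrete-Reifenberg argument as in \cite{DLSk2}. However, there is a genuine conceptual error in your account of the linearized classification, and you have misidentified where the new technical work lies.

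The error: you write that the codimension-one classification ``forces the relevant blow-ups to be homogeneous of degree one with an $(m-1)$-dimensional spine.'' This is backwards. At a point of $\Ffrak_Q(T)$ the tangent cone is \emph{flat}, so the blow-up of $N$ cannot be $1$-homogeneous (a $1$-homogeneous $\Ascr_Q$-minimizer corresponds to an open-book, non-flat, tangent cone). The classification (Proposition~\ref{prop:highfreq} in the paper, resting on \cite{DLHMSS_structure}) shows that the frequency at any $x\in\Ffrak_Q(T)$ is an integer $\geq 2$, and it is precisely this lower bound $\Ibf(x,0)\geq 2$ that forces the spine of any blow-up to be at most $(m-2)$-dimensional. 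This is what makes $m-2$ the correct rectifiability dimension and what drives the entire $\beta_2^{m-2}$ argument; if blow-ups really had $(m-1)$-dimensional spines, the scheme would produce the wrong dimension. Relatedly, there is no ``induction on frequency strata'' or separate treatment of a ``top-frequency'' stratum: the covering argument (Lemma~\ref{lem:cover1}, Proposition~\ref{prop:cover2}) handles all points of $\Sbf$ simultaneously via uniform frequency drop.

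On the technical heart: the center-manifold/normal-approximation compatibility from \cite{DLHMS} transfers with essentially no change, and the average of $N$ is not the obstacle you anticipate (indeed $\boldsymbol\eta\circ u=0$ for the limiting Dir-minimizers). The genuinely new ingredients in the mod$(2Q)$ setting are the cone-splitting and unique-continuation lemmas for $\Ascr_Q$-valued (rather than $\Acal_Q$-valued) Dir-minimizers, where the sign flips across $\Delta_Q u$ must be handled (Lemmas~\ref{lem:modphomog}, \ref{lem:modpuniquecont}, \ref{lem:modpinvariant}), and the lower bound \eqref{eq:lowerbd} in the proof of the $\beta_2$ estimate, which requires ruling out a non-trivial $\Ascr_Q$-minimizer that is a function of a single variable yet has an interior $Q\llbracket 0\rrbracket$-point. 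These are the places where the $\Ascr_Q$ structure enters essentially; the rest of the argument is a direct port of \cite{DLSk2}.
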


This, in particular, implies that when $\bar{n}=1$ and $p=2Q$, up to an $(m-2)$-rectifiable set, the singular set of $T$ is locally a $C^{1,\alpha}$ $(m-1)$-dimensional submanifold.

\subsection{Connection with stable minimal hypersurfaces}
Any $T$ satisfying Assumption \ref{asm:modp1} induces a stable integral varifold. Moreover, the existing known regularity theory for stable integral varifolds of codimension one ($\bar{n}=1$) appears to be consistent with the known regularity theory for area-minimizing mod$(p)$ hypersurfaces; see the works \cite{W14} and \cite{MW21}. In the more general framework of codimension one $m$-dimensional stable integral varifolds, the work \cite{KW} of Krummel and Wickramasekera demonstrates that locally in the regions where there are no points of density $3$ or higher, the interior flat singular set is $(m-2)$-rectifiable. However, to the knowledge of the author, such a result is still open for higher multiplicities, in such a framework.

\section*{Acknowledgments} The author is indebted to Professor Camillo De Lellis for introducing her to this problem, taking the time to explain important background results to her, and reading a preliminary draft of the article. The author would also like to further thank both Camillo De Lellis and Vikram Giri for partaking in many fruitful discussions with her. She would also like to thank Paul Minter for pointing out some known results in the literature that she was not aware of.

The author acknowledges the support of the National Science Foundation through the grant FRG-1854147.

\section{Notation and preliminaries}\label{s:prelim}
We begin this section by providing a list of notation, consistent with \cite{Federer,DLHMS,DLHMSlin}, which will be frequently used throughout this article.
\begin{align*}
	&\Fcal^p_K(S,T) &&\text{the flat distance modulo $p$ between the $m$-dimensional integral flat chains $S, T$} \\
	& \ && \text{with compact support in $K$ (see, for example~\cite{Federer}*{Section 4.2.26} or~\cite{DLHMS});}\\
	&\partial^p [T] && \text{the boundary modulo $p$ of $[T]$, defined by $\partial^p [T] \coloneqq [\partial T]$}; \\
	&\spt^p(T) &&\text{the support mod $p$ of $T \in \Rscr_m$, defined by $\spt^p(T) \coloneqq \bigcap_{S = T \ \textrm{mod}(p)} \spt S$}; \\
	& \Acal_Q(\R^k) && \text{the space of $Q$-tuples of vectors in $\R^k$ (see \cite{DLS_MAMS} for more details);} \\
	& \Ascr_Q(\R^k) && \text{the quotient space $\Acal_Q(\R^k)\times \{-1,1\} / \sim$, where $\sim$ is the equivalence relation} \\
	& \ && \text{given by $(S,-1) \sim (T,1) \iff \exists q \in \R^k$ with $S=Q\llbracket q\rrbracket = T$, and} \\ & \ &&(S,\pm 1) \sim (T, \pm 1) \iff S = T; \\
	& \Bbf_r(p) && \text{the $(m+n)$-dimensional Euclidean ball of radius $r$ centered at $p$};\\
	& \Bcal_r(z) && \text{the geodesic ball of radius $r$ centered at $z$ on a given center manifold (see~\cite{DLS16centermfld}} \\
	& \ && \text{for more details);}\\
	&\Hcal^s &&\text{the} \ s \text{-dimensional Hausdorff measure}, \ s \geq 0; \\
	& d_H && \text{the Hausdorff distance, defined on the space of compact subsets of $\R^{m+n}$}; \\
	&W^{k,s}(\Omega;\Acal_Q) &&\text{the space of} \ Q\text{-valued} \ s\text{-integrable Sobolev maps with $s$-integrable} \\ 
	& \ && \text{distributional derivatives up to order $k \in \Nbb$} \ \text{on} \ \Omega; \\
	&B_r(z,\pi) &&\text{the $m$-dimensional Euclidean ball of radius $r$ and center $z$ in the} \\
	& \ &&\text{$m$-dimensional plane $\pi$. If it is clear from context, we will just write $B_r(z)$;} \\
	& E^\perp &&\text{The orthogonal complement to the set $E$ with respect to the standard } \\
	& \ &&\text{Euclidean inner product;} \\
	&\Cbf_r(z,\pi) &&\text{the infinite $(m+n)$-dimensional Euclidean cylinder $B_r(z,\pi) + \pi^\perp$ with center}\\
	& \ &&\text{$z$, radius $r$ in direction $\pi^\perp$;} \\
	& \iota_{z,r} &&\text{the scaling map $w \mapsto \frac{w-z}{r}$ around the center $z$;} \\
	& \tau_z &&\text{the translation map $w \mapsto w+z$;} \\
	& f_\sharp &&\text{the push-forward under the map $f$;} \\
	& E_{z,r} &&\text{the blow-up $(\iota_{z,r})_\sharp E$ of the set $E$;} \\
	&T_p \Ncal &&\text{the tangent plane to the manifold $\Ncal$ at the point $p \in \Ncal$;} \\
	&\Tbf_{F} &&\text{the current $\sum_{i\in\N}\sum_{j=1}^Q (f^j_i)_\sharp \llbracket M_i \rrbracket$ induced by the push-forward of a}\\
	& \ &&\text{$Q$-valued map $F:M \to \Acal_Q(\R^{m+n})$ on a Borel set $M \subset \R^m$ with} \\
	& \ &&\text{decomposition $F|_{M_i} = \sum_{j=1}^Q \llbracket f^j_i \rrbracket$, $M = \sqcup_i M_i$ as in~\cite{DLS_multiple_valued}*{Lemma 1.1}} \\
	& \ &&\text{(see~\cite{DLS_multiple_valued}*{Section 1.1} for a more detailed definition);} \\
	&\Theta(T,p) &&\text{the $m$-dimensional Hausdorff density of $T$ at a given point $p$;} \\
	&\mathbf{p}_\pi &&\text{the orthogonal projection to the $m$-plane $\pi \subset \R^{m+n}$;} \\
	&T_{x,r} &&\text{the pushforward $(\iota_{x,r})_\sharp T$ of $T$ under the rescaling map $\iota_{x,r}$}.
\end{align*}

We are now in a position to introduce some key notions that will be pivotal in this article. In order to study the behaviour of $T$ around flat singularities, one needs Almgren's celebrated center manifold construction (see~\cite{Almgren_regularity,DLS16centermfld}) and the linear theory of \emph{special $Q$-valued maps} in this setting. We refer the reader to~\cite{DLHMS, DLHMSlin} for the relevant background theory and notation in the mod$(p)$ framework. We recall here the definition of the \emph{non-oriented excess} of $T$ with respect to flat planes here, for the convenience of the reader.

\begin{definition}
	For $T$ as in Assumption~\ref{asm:modp1}, we define the non-oriented excess $\Ebf^{no}(T,\Cbf_r(x),\pi)$ of $T$ in $\Cbf_r(x) = B_r(x,\pi) \times \pi^\perp$ by
	\[
	\Ebf^{no}(T,\Cbf_r(x)) \coloneqq \frac{1}{2\omega_m r^m} \int_{\Cbf_r(x)} |\vec{T} - \vec{\pi}|^2_{no} \dd \|T\|,
	\]
	where
	\[
	|\vec{T} - \vec{\pi}|_{no} \coloneqq \min\{|\vec{T} - \vec{\pi}|, |\vec{T} + \vec{\pi}|\}.
	\]
	The non-oriented excess $\Ebf^{no}(T,\Bbf_r(x),\pi)$ of $T$ in $\Bbf_r(x)$ with respect to the $m$-dimensional plane $\pi$ is defined analogously. The non-oriented excess $\Ebf^{no}(T,\Bbf_r(x))$ of $T$ in $\Bbf_r(x)$ is then defined as
	\[
	\Ebf^{no}(T,\Bbf_r(x)) \coloneqq \min_{\text{$m$-planes $\pi$}} \Ebf^{no}(T,\Bbf_r(x),\pi).
	\]
	We define the mod$(p)$ excess $\Ebf(T,\Cbf_r(x))$ of $T$ in $\Cbf_r(x) = B_r(x,\pi) \times \pi^\perp$ in analogous manner to the classical oriented excess for integral currents:
	\[
	\Ebf(T,\Cbf_r(x)) \coloneqq \frac{1}{2\omega_m r^m} \int_{\Cbf_r(x)} |\vec{T} - \vec{\pi}|^2\dd\|T\| = \frac{\| T\|(\Cbf_r(x)) - \|\mathbf{p}_\sharp T\|_p (\Cbf_r(x))}{\omega_m r^m}.
	\]
	Note that although $\|T\| = \|T\|_p$ since $T$ is a representative mod$(p)$, this is no longer necessarily the case for $\|\mathbf{p}_\sharp T\|$ and $\|\mathbf{p}_\sharp T\|_p$, since $\mathbf{p}_\sharp T$ need not be a representative mod$(p)$. 
	The mod$(p)$ excess $\Ebf(T,\Bbf_r(x),\pi)$ of $T$ in $\Bbf_r(x)$ with respect to the $m$-dimensional plane $\pi$ and the mod$(p)$ excess $\Ebf(T,\Bbf_r(x))$ are defined analogously.
\end{definition}

\subsection{Reduction to a single center manifold}
Following \cite{DLHMS}*{Section 25} we introduce appropriate disjoint intervals $]s_j, t_j]\subset ]0,1]$, called {\em intervals of flattening}, the union of which identifies those radii $r$ such that the non-oriented excess $\mathbf{E}^{no} (T, \Bbf_{6\sqrt{m} r})$ falls below a positive fixed threshold $\varepsilon_3^2$. Arguing as in \cite{DLHMS}*{Section 25} for each rescaled current $T_{0, t_j}$ and rescaled ambient manifold $\Sigma_{0, t_j}$ we produce a center manifold $\mathcal{M}_j$ and an appropriate multivalued map $N_j : \mathcal{M}_j \to \mathscr{A}_Q (\mathbb R^{m+n})$. The latter takes values in the normal bundle of $\mathcal{M}$ and gives an efficient approximation of the current $T_{0, t_j}$ in $\Bbf_3\setminus \Bbf_{s_j/t_j}$.
We recall the following (non-oriented) tilt excess decay from~\cite{DLHMSS_structure}:
\begin{proposition}[\cite{DLHMSS_structure}*{Proposition 2.3}]\label{prop:excessdecaymodp}
	Let $\delta_2 > 0$ be fixed as in \cite{DLHMS}*{Assumption 17.10}. There exists $\eps_1 = \eps_1(\delta_2,m,Q)$ and a positive constant $C = C(\delta_2,m,Q)$ such that the following holds. Suppose that $T$ satisfies Assumption~\ref{asm:modp1} with $\bar n = 1$ and $p=2Q$. Assume that $q\in \Ffrak_Q(T)$ and that 
	\begin{equation}\label{e:excessthreshold}
		\mathbf{E}^{no} (T, \mathbf{B}_\rho(q),\pi_q) + (\rho\Abf)^2 < \varepsilon_1, \qquad \Cbf_{\rho/4}(q, \pi_q)\cap\spt_p T \subset\Bbf_{\rho/2}(q).
	\end{equation} 
	Then for every $r\leq \frac{\rho}{32}$, we have
	\begin{align}\label{e:excessdecay-quantitative-higherI}
		\Ebf^{no}(T,\Bbf_r(q)) \leq C\left(\frac{r}{\rho}\right)^{2-2\delta_2}\left(\Ebf^{no}(T,\Bbf_\rho(q),\pi_q) +(\rho\Abf)^2\right).
	\end{align}
\end{proposition}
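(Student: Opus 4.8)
Since this is \cite{DLHMSS_structure}*{Proposition~2.3}, the plan is to reproduce its proof: a De Giorgi--Allard--Almgren excess-decay argument run through the mod$(p)$ Lipschitz/graphical approximation machinery of \cite{DLHMS,DLHMSlin} and the linear theory of special $Q$-valued maps, with the hypothesis $\bar n = 1$ entering only at the end through a classification of homogeneous solutions of the linearized problem. I would argue by contradiction: if the estimate failed for every choice of $\eps_1$ and $C$, there would be currents $T_k$ as in Assumption~\ref{asm:modp1} with $\bar n = 1$, $p = 2Q$, points $q_k \in \Ffrak_Q(T_k)$, radii $\rho_k$ and planes $\pi_k := \pi_{q_k}$ satisfying \eqref{e:excessthreshold} with $E_k := \Ebf^{no}(T_k,\Bbf_{\rho_k}(q_k),\pi_k) + (\rho_k\Abf)^2 \to 0$, yet $\Ebf^{no}(T_k,\Bbf_{r_k}(q_k)) > k\,(r_k/\rho_k)^{2-2\delta_2}E_k$ for some $r_k \le \rho_k/32$. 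Applying $\iota_{q_k,\rho_k}$ and a rotation reduces matters to $q_k = 0$, $\rho_k = 1$, $\pi_k = \pi_0 := \R^m\times\{0\}$; the second condition in \eqref{e:excessthreshold} then guarantees that $T_k$ projects properly over $B_{1/4}(0,\pi_0)$, so that the graphical approximation below is available on a fixed ball.

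Next I would apply the $Q$-valued --- in fact special $Q$-valued, to record the local orientations of the sheets --- Lipschitz approximation of currents with small non-oriented excess from \cite{DLHMS,DLHMSlin}, obtaining maps $f_k\colon B_{1/8}(0,\pi_0)\to\Ascr_Q(\pi_0^\perp)$ with $\Lip(f_k)\to 0$, $\int|Df_k|^2\le CE_k$, $\Tbf_{f_k}$ coinciding with $T_k$ off a set of mass superlinear in $E_k$, and an $L^2$ approximation error superlinear in $E_k$; the side condition in \eqref{e:excessthreshold} also forces a definite fraction of the excess to be captured, so $\liminf_k E_k^{-1}\int_{B_{1/8}}|Df_k|^2 > 0$. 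Writing $g_k := E_k^{-1/2}f_k$, the interior estimates and the almost-minimality inequalities --- whose error terms are again superlinear in $E_k$ and hence negligible after normalization --- yield, along a subsequence, strong $W^{1,2}_{\loc}$ convergence of $g_k$ (together with the recorded orientations) to a nontrivial limit $u$ that is $\Dir$-minimizing among special $Q$-valued maps, i.e.\ a solution of the linearized problem for area-minimizing mod$(p)$ hypersurfaces, with $u(0)=Q\llbracket 0\rrbracket$. Treating separately, by the classical tilt-excess iteration on rotating reference planes, the case where the blow-up of $u$ at $0$ is linear, we may also assume $D\bar u(0) = 0$ for the average $\bar u := \tfrac1Q\sum_i u_i$.

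It then remains to prove the linear decay statement: for a suitable $\delta_2 > 0$ and a constant $C$, any $\Dir$-minimizing special $Q$-valued $u$ in codimension one with $u(0)=Q\llbracket 0\rrbracket$ and $D\bar u(0)=0$ satisfies $r^{-m}\int_{B_r}|Du|^2 \le C\,r^{2-2\delta_2}\int_{B_{1/8}}|Du|^2$ for all small $r$. Here $\bar n = 1$ is essential: by Almgren's frequency-function monotonicity for special $Q$-valued maps (developed in \cite{DLHMSlin}) together with the codimension-one classification of homogeneous solutions of the linearized problem, \cite{DLHMSS_structure}*{Proposition~2.9}, the frequency $I_u(0)$ lies in $\{1\}\cup[2,\infty)$ --- the value $1$ corresponding exactly to linear solutions $Q\llbracket\ell\rrbracket$, which are excluded by $D\bar u(0)=0$ (or absorbed into the rotation step) --- so $I_u(0)\ge 2$. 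Since $2I_u(0)-2 \ge 2 > 2-2\delta_2$, frequency monotonicity upgrades this into the displayed decay, with the slack $\delta_2$ also absorbing the approximation errors and legitimizing the dyadic iteration. Transporting this decay for $u$ back through the strong $W^{1,2}$ convergence and the superlinear error bounds then contradicts $\Ebf^{no}(T_k,\Bbf_{r_k}(0)) > k\,r_k^{2-2\delta_2}E_k$ for large $k$, proving the proposition.

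The main obstacle is the bridge between the nonlinear and the linear problems: verifying strong convergence of the special $Q$-valued approximations and that the limit genuinely minimizes the mod$(p)$ linearized functional (so that the orientation bookkeeping and the superlinearity of all error terms really do survive the rescaling), and --- most critically --- the codimension-one classification \cite{DLHMSS_structure}*{Proposition~2.9}, which supplies the frequency gap at $1$, has no higher-codimension counterpart, and is exactly what pins down the hypothesis $\bar n = 1$.
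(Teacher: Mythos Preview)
The paper does not prove this proposition at all: it is simply \emph{recalled} from \cite{DLHMSS_structure}*{Proposition~2.3} with the introductory sentence ``We recall the following (non-oriented) tilt excess decay from~\cite{DLHMSS_structure}'', and no argument is supplied. So there is no proof in the present paper to compare your proposal against.

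That said, your sketch is a faithful outline of the argument one finds in \cite{DLHMSS_structure}: contradiction and rescaling, the mod$(p)$ special $Q$-valued Lipschitz approximation from \cite{DLHMS,DLHMSlin}, strong $W^{1,2}_{\loc}$ compactness to a nontrivial $\Ascr_Q$-valued Dir-minimizer $u$ with $u(0)=Q\llbracket 0\rrbracket$, and then the codimension-one frequency gap (the classification \cite{DLHMSS_structure}*{Proposition~2.9}) to force $I_u(0)\ge 2$ and hence the $r^{2-2\delta_2}$ decay. Your identification of the two genuinely delicate points --- the strong convergence/minimality of the limit in the special $Q$-valued sense, and the reliance on $\bar n=1$ through the classification of homogeneous linearized solutions --- is accurate.
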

Moreover, by an obvious adaptation of the proof of \cite{DLHMSS_structure}*{Proposition 2.4}, we have the following.

\begin{proposition}[cf. \cite{DLHMSS_structure}*{Proposition 2.4}]\label{p:onecm}
	For every $\bar{c}_s>0$, there is a constant $\tau_0=\tau_0(m,Q,\bar{c}_s)>0$ with the following property. Let $T$ satisfy Assumption~\ref{asm:modp1} with $\bar n = 1$, $p=2Q$ and let $\delta_2 > 0$ be fixed as in \cite{DLHMS}*{Assumption 17.10}. Then there exists a choice of parameters in \cite{DLHMS}*{Assumption 17.11} with this choice of $\delta_2$, such that if the assumptions \cite{DLHMS}*{Assumption 17.5} hold and \eqref{e:excessthreshold} holds with $\rho=6\sqrt{m}$, then
	\begin{itemize}
		\item[(a)] The decay \eqref{e:excessdecay-quantitative-higherI} holds for every $r\leq \tau_0$;
		\item[(b)] $\Ffrak_Q(T)\cap\Bbf_{\tau_0}$ is a subset of $\mathbf\Phi(\mathbf{\Gamma})\subset \mathcal{M}_0$;
		\item[(c)] for each $q=(x_q,y_q)\in \Ffrak_Q(T)\cap\Bbf_{\tau_0}$, where $x_q\in\pi_0$, $y_q\in\pi_0^\perp$, we have
		\[
		L\in\Wscr \qquad\implies \qquad \ell(L)< \bar{c}_s \dist(x_q,L).
		\]
	\end{itemize}
\end{proposition}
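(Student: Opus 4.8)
The plan is to extract all three conclusions from the excess decay of Proposition~\ref{prop:excessdecaymodp}, adapting the proof of \cite{DLHMSS_structure}*{Proposition~2.4}. First I would apply Proposition~\ref{prop:excessdecaymodp} at $q=0$ with $\rho=6\sqrt m$, legitimate since $0\in\Ffrak_Q(T)$ and \eqref{e:excessthreshold} holds there. Because the right-hand side of \eqref{e:excessdecay-quantitative-higherI} decays like the fixed power $r^{2-2\delta_2}$, and the unnormalised excess is monotone, $\mathbf E^{no}(T,\mathbf B_{6\sqrt m r})$ stays below the flattening threshold $\varepsilon_3^2$ for every $r\le1$ once $\varepsilon_1$ is small in terms of $\varepsilon_3,m$; hence the first interval of flattening is all of $]0,1]$, so there is a single center manifold $\mathcal M_0$ (reference plane $\pi_0$, normal approximation $N_0$), built once for $T$. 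Part (a) is then just \eqref{e:excessdecay-quantitative-higherI} restricted to $r\le\tau_0\le\rho/32$.

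For (b), fix $q=(x_q,y_q)\in\Ffrak_Q(T)\cap\mathbf B_{\tau_0}$ and transport the decay to $q$: the optimal planes at $0$ at dyadic scales are Cauchy, with $\sum_k|\pi_{2^{-k}\rho}(0)-\pi_0|^2\lesssim\mathbf E^{no}(T,\mathbf B_\rho)\lesssim\varepsilon_1$, so the flat tangent plane $\pi_q$ at $q$ satisfies $|\pi_q-\pi_0|^2\lesssim\tau_0^{2-2\delta_2}\varepsilon_1$; with the concomitant smallness of the height, \eqref{e:excessthreshold} then holds at $q$ at a fixed radius $\rho_q\sim1$ once $\tau_0$ is small, and Proposition~\ref{prop:excessdecaymodp} yields $\mathbf E^{no}(T,\mathbf B_r(q))\lesssim r^{2-2\delta_2}\mathbf m_0$ for $r\le\rho_q/32$, where $\mathbf m_0$ is the mass scale. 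The center-manifold algorithm stops subdividing a cube $L$ only when the excess near $L$ exceeds $C_e\mathbf m_0\ell(L)^{2-2\delta_1}$, or the height near $L$ exceeds $C_h\mathbf m_0^{\gamma}\ell(L)^{1+\beta_2}$ (with $\gamma>0$ the exponent in the height bound $\mathbf h\lesssim r\,(\mathbf E^{no})^{\gamma}$, which by design is also the power of $\mathbf m_0$ there), or a neighbouring larger cube is stopped; testing these at scales $\sim\ell(L)$ near a cube containing $x_q$, the transported decay bounds the excess by $\lesssim\ell(L)^{2-2\delta_2}\mathbf m_0$ and the height by $\lesssim\ell(L)^{1+\gamma(2-2\delta_2)}\mathbf m_0^{\gamma}$, so (as $\ell(L)\le1$) no stopping ever occurs near $x_q$ provided the parameters of \cite{DLHMS}*{Assumption~17.11} are fixed with $\delta_1>\delta_2$ and $\beta_2<\gamma(2-2\delta_2)$. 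Thus $x_q\in\mathbf\Gamma$, and $y_q=\varphi(x_q)$ (i.e.\ $q=\mathbf\Phi(x_q)$) by the standard fact (\cite{DLS16centermfld}, \cite{DLHMS}) that every density-$Q$ point of $T$ at which the excess is infinitesimal at all scales lies in $\mathbf\Phi(\mathbf\Gamma)$, the center manifold passing through it because it interpolates the barycentre of $T$, which collapses onto $q$.

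For (c), suppose towards a contradiction that $L\in\Wscr$ has $\ell(L)\ge\bar c_s\dist(x_q,L)$. Then $L$ lies within $(\bar c_s^{-1}+C)\ell(L)$ of $x_q$, so the region over which the stopping condition for $L$ is evaluated is contained in $\mathbf B_r(q)$ with $r\sim\bar c_s^{-1}\ell(L)$; by the transported decay of (b), and up to the lower-order error from replacing $\pi_q$ by the plane of $L$, the excess (resp.\ height) over that region is at most $C\bar c_s^{-(m+2-2\delta_2)}\ell(L)^{2-2\delta_2}\mathbf m_0$ (resp.\ $C\bar c_s^{-(1+\gamma(2-2\delta_2))}\ell(L)^{1+\gamma(2-2\delta_2)}\mathbf m_0^{\gamma}$). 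Since $L\in\Wscr$, one stopping condition holds for it, giving the opposite lower bound $C_e\mathbf m_0\ell(L)^{2-2\delta_1}$ or $C_h\mathbf m_0^{\gamma}\ell(L)^{1+\beta_2}$ on the same quantity; the $\mathbf m_0$-powers cancel, and since $\delta_1>\delta_2$ and $\beta_2<\gamma(2-2\delta_2)$, dividing gives a lower bound $\ell(L)\ge c(\bar c_s,\delta_1,\delta_2,\beta_2,C_e,C_h,m)$. Choosing $C_e,C_h$ large in terms of $\bar c_s$ (and the other parameters) makes this bound exceed $1$, hence exceed the largest admissible cube side length — a contradiction; the neighbour/chaining rule in the definition of $\Wscr$ only worsens $\bar c_s$ by a dimensional factor and is absorbed identically. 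One then sets $\tau_0=\tau_0(m,Q,\bar c_s)$ equal to the smallest threshold needed for the transport step in (b) and the comparisons above.

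The principal difficulty is the parameter bookkeeping rather than any single estimate: one must exhibit one admissible choice of the many constants of \cite{DLHMS}*{Assumption~17.11} that simultaneously produces $\mathcal M_0$, keeps $\delta_1>\delta_2$ and $\beta_2<\gamma(2-2\delta_2)$ with the slack the remaining center-manifold estimates require, and makes $C_e,C_h$ large enough (given $\bar c_s$) to close (c), all while leaving $\tau_0$ dependent only on $m,Q,\bar c_s$. In particular one has to verify carefully that \eqref{e:excessthreshold} propagates to every $q\in\Ffrak_Q(T)\cap\mathbf B_{\tau_0}$, and to handle the routine-but-nonempty passage between the non-oriented ball excess in Proposition~\ref{prop:excessdecaymodp} and the oriented cylindrical excess over cubes used in the stopping conditions — precisely the content hidden in "an obvious adaptation of \cite{DLHMSS_structure}*{Proposition~2.4}".
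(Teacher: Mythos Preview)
Your proposal is correct and follows the same route the paper indicates: the paper gives no self-contained argument but simply defers to \cite{DLHMSS_structure}*{Proposition~2.4}, remarking only that the fixed constant $\frac{1}{64\sqrt m}$ there may be replaced by an arbitrary $\bar c_s>0$ at the cost of letting $\tau_0$ depend on $\bar c_s$. Your sketch is a faithful reconstruction of that argument (excess decay $\Rightarrow$ single interval of flattening, transport of \eqref{e:excessthreshold} to each $q\in\Ffrak_Q(T)\cap\Bbf_{\tau_0}$, then comparison of the decayed excess/height against the stopping thresholds), the one cosmetic difference being that you route the $\bar c_s$-dependence through the stopping constants $C_e,C_h$, whereas the paper's phrasing emphasises the dependence landing in $\tau_0$; either choice is compatible with the quantifier order in the statement.
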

Indeed, notice that in the statement \cite{DLHMSS_structure}*{Proposition 2.4}, one may replace $\frac{1}{64\sqrt{m}}$ with a choice of $\bar{c}_s>0$ arbitrarily small, at the price of allowing the scale $\tau_0$ to additionally depend on $\bar{c}_s$. We do not include the details here, and refer the reader to the proof therein.

Now fix $\bar{c}_s>0$ (to be determined in Theorem \ref{t:modpreduction} below) and let us decompose $\Ffrak_Q(T)$ into a countable union of (nested) sets as follows:
\[
\Sbf_j \coloneqq \left\{q\in \Ffrak_Q(T): \text{the assumptions of Proposition \ref{p:onecm} hold with $\bar{c}_s$ for $T_{q, \frac{1}{6\sqrt{m}j}}$} \right\}.
\]
Now given any point $q\in\Sbf_j$, we may apply Proposition \ref{p:onecm} with this choice of $\bar{c}_s$ to $T_{q,\frac{1}{6\sqrt{m}j}}$ to reduce Theorem \ref{thm:modpmain} to the following theorem, the first three conclusions of which are an immediate consequence of the above discussion (without any constraint on $\eps_4$ or $\eta$). Note that here we use a slightly different definition of $\boldsymbol{m}_0$ to that in \cite{DLHMS}, but the arguments therein remain unchanged under such a replacement (up to possibly decreasing the parameter $\eps_2$ therein).
\begin{theorem}\label{t:modpreduction}
	There exists $\eps_4=\eps_4(Q,m)> 0$, $\eta=\eta(Q,m)>0$ such that for some $\bar{c}_s=\bar{c}_s(m,\eta)>0$, the following holds.  Let $T$ satisfy Assumption~\ref{asm:modp1} with $\bar n = 1$, $p=2Q$ and let $\delta_2 > 0$ be fixed as in \cite{DLHMS}*{Assumption 17.10}. Then for any $j\in \N$ and any $q\in \mathbf{S}_j$, letting $r_0 \coloneqq \frac{\tau_0}{6\sqrt{m}j}$ for $\tau$ as in Proposition \ref{p:onecm} and $\boldsymbol{m}_0\coloneqq \mathbf{E}^{no} (T_{q,r_0}, \mathbf{B}_{6\sqrt{m}}) + (6\sqrt{m}\Abf)^2 < \eps_4^2$, there exists a choice of parameters in \cite{DLHMS}*{Assumption 17.11} with this choice of $\delta_2$, such that the following properties hold:
	\begin{itemize}
		\item[(i)] $T_{p, r_0}$ satisfies the assumptions of the relevant statements in \cite{DLHMS} (in place of $T$), where the center manifold $\mathcal{M}_0$ is constructed using $\delta_2$ and $\boldsymbol{m}_0$ as defined above;
		\item[(ii)] the decay \eqref{e:excessdecay-quantitative-higherI} holds  for $T_{q,r_0}$ for all scales $r\leq 1$;
		\item[(iii)] the rescaling $\iota_{q,r_0}(\mathbf{S}_j) \cap \overline{\Bbf}_{6\sqrt{m}}$, and thus also its closure $\Sbf$ (for which we omit dependency on $j$), is contained in $\mathcal{M}_0$; 
		\item[(iv)] for every $x\in \Mcal_0\cap\Bbf_{6\sqrt{m}}$ and for every $r\in ]\eta d(x,\Sbf),1]$ (where $d (x, \mathbf{S}) = \min \{ d(x,y): y\in \mathbf{S}\}$), every cube $L$ which intersects $B_r (q, \pi_0)$ satisfies $\ell (L) \leq c_s r$, where $c_s=\textstyle{\frac{1}{64\sqrt{m}}}$ is as in \cite{DLHMS}*{(25.5)}. 
		\item[(v)] $\mathbf{S}$ has finite upper $(m-2)$-dimensional Minkowski content and it is $(m-2)$-rectifiable.
	\end{itemize} 
\end{theorem}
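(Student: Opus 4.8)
The first three items (i)–(iii) are, as the authors note, immediate from the setup: once we pass to the rescaled current $T_{q,r_0}$ the smallness threshold $\boldsymbol m_0 < \eps_4^2$ is met, Proposition \ref{p:onecm} applies on the whole ball $\Bbf_{6\sqrt m}$, the excess decay \eqref{e:excessdecay-quantitative-higherI} holds at every scale $r\le 1$, and the flat singular set lies on the single center manifold $\mathcal M_0$. Item (iv) is the statement that, after rescaling, every Whitney cube in the center-manifold construction that meets $B_r(q,\pi_0)$ is not too large relative to $r$, as long as $r$ is comparable to (or larger than) $\eta\, d(x,\mathbf S)$; this follows by combining conclusion (c) of Proposition \ref{p:onecm} (which says cubes $L\in\mathscr W$ satisfy $\ell(L)<\bar c_s\dist(x_q,L)$) with the usual Whitney-cube estimates of \cite{DLHMS}*{Section 25}, choosing $\bar c_s = \bar c_s(m,\eta)$ small enough that $\bar c_s$ times the relevant geometric factor is below $c_s = \frac{1}{64\sqrt m}$. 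The real content is item (v): the upper Minkowski content bound and the $(m-2)$-rectifiability of $\mathbf S$.

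The plan for (v) is to run the Naber–Valtorta / quantitative-stratification machinery exactly as in \cite{DLMSV,DLSk2}, now driven by the \emph{non-oriented} excess and the decay estimate \eqref{e:excessdecay-quantitative-higherI}. The key device is a monotone (up to errors) quantity whose drop controls how fast $T$ converges to its flat tangent plane — here one uses the non-oriented tilt excess $\Ebf^{no}(T,\Bbf_r(q))$ together with \eqref{e:excessdecay-quantitative-higherI}, which gives an almost-doubling/decay dichotomy: at each flat singular $Q$-point the excess decays like $r^{2-2\delta_2}$, so the "pinching" from scale $2r$ to scale $r$ is summable. From this one extracts (a) a \emph{spatial variation} or "$L^2$-best-plane" estimate comparing the optimal planes at nearby points and scales — this is the analogue of the key estimate in \cite{DLSk2} — and (b) the corresponding \emph{Reifenberg-type} control. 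One then applies the discrete Reifenberg theorem of Naber–Valtorta (as packaged in \cite{DLMSV}) to the measure $\mu = \Hcal^{m-2}\mres \mathbf S$ on the center manifold $\mathcal M_0$: the covering argument at each dyadic scale, using the excess-decay to bound $\sum$ (deviations)$^2$, yields simultaneously the uniform Minkowski content estimate $\Hcal^{m-2}\bigl(\{x: d(x,\mathbf S)<r\}\bigr)\le C r^{2}$ and, by the rectifiable Reifenberg theorem, the $(m-2)$-rectifiability of $\mathbf S$.

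The main obstacle — and the reason this requires adapting \cite{DLMSV,DLSk2} rather than quoting them — is that the natural frequency/excess-type monotone quantity in the mod$(2Q)$ setting is built from the \emph{non-oriented} excess and the center-manifold approximation $N_j$, and one must show it behaves well under the change of center manifold across intervals of flattening and under the projection onto $\mathcal M_0$. Concretely, the delicate points are: controlling the error terms coming from the second fundamental form of $\mathcal M_0$ and from the fact that $\mathbf p_\sharp T$ need not be a representative mod$(p)$ (so $\|\mathbf p_\sharp T\|$ and $\|\mathbf p_\sharp T\|_p$ differ); verifying that the "no-gaps" / density-lower-bound hypothesis of the discrete Reifenberg theorem holds along $\mathbf S$, using that every point of $\mathbf S$ has density exactly $Q=\tfrac p2$ and a \emph{unique} flat tangent cone (from \cite{DLHMSS_structure,MW21}); and propagating the single-scale excess pinching \eqref{e:excessdecay-quantitative-higherI} into the multi-scale $L^2$ bound $\int_{\mathbf S\cap\Bbf}\int_0^1 (\text{tilt excess at }(x,r))\,\frac{dr}{r}\,d\mu(x)<\infty$ that feeds the covering argument. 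Once these are in place, the conclusion of (v) — hence Theorem \ref{thm:modpmain} via the reduction already carried out — follows by the now-standard Naber–Valtorta scheme.
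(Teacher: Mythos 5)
Your treatment of items (i)--(iv) is consistent with the paper's: (i)--(iii) fall out of Proposition~\ref{p:onecm} once $\boldsymbol m_0$ is made small, and for (iv) the paper's argument is precisely the simple comparison you sketch, i.e.\ a cube $L$ with $\ell(L)>c_s r$ intersecting $B_r(q,\pi_0)$ for $r>\eta\,d(x,\Sbf)$ would also be too large relative to its distance from a nearby point of $\Sbf$, contradicting conclusion~(c) of Proposition~\ref{p:onecm} with the choice $\bar c_s = c_s/(1+\eta^{-1})$.

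For (v), however, you misidentify the monotone quantity that drives the covering argument, and the misidentification is fatal. You propose running the Naber--Valtorta scheme ``driven by the non-oriented excess and the decay estimate~\eqref{e:excessdecay-quantitative-higherI}'', but that estimate holds uniformly at \emph{every} flat singular $Q$-point, so it produces no dichotomy and no summable pinching: the excess decays polynomially at all scales and carries no finer structural information on $\Sbf$ than the $\dim_{\Hcal}\le m-2$ bound already known from \cite{DLHMSS_structure}. The quantity the paper (following \cite{DLMSV,DLSk2}) actually monotonizes is Almgren's frequency $\Ibf(x,r)=r\Dbf(x,r)/\Hbf(x,r)$ of the $\Mcal$-normal approximation $N$, which is a fundamentally different object: it measures the failure of $N$ to be radially homogeneous, not the tilt of $T$ relative to a plane. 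The technical heart of the argument is the frequency machinery you do not describe: the first-variation identities and almost-monotonicity (Lemma~\ref{lem:firstvar}, Corollary~\ref{cor:freqmono}, Lemma~\ref{lem:simplify}), the frequency lower bound $\Ibf(x,0)\ge 2$ on $\Sbf$ (Proposition~\ref{prop:highfreq}), the spatial frequency variations (Lemmas~\ref{lem:spatialvarI}--\ref{lem:spatialvarDH}), the quantitative spine-splitting for $\Ascr_Q$-valued Dir-minimizers (Lemmas~\ref{lem:modphomog}--\ref{lem:smallspatialvar}), and the control of the Jones $\beta_2$ coefficients of any measure $\mu$ supported in $\Sbf$ by the frequency pinching $W^{4r}_{r/8}$ (Proposition~\ref{prop:beta2control}); it is this $\beta_2$ estimate, not any excess estimate, that feeds into the covering argument (Lemma~\ref{lem:cover1}, Proposition~\ref{prop:cover2}) and yields the Minkowski bound and rectifiability. (The excess decay~\eqref{e:excessdecay-quantitative-higherI} enters earlier and differently: it is what permits a single center manifold and the uniform estimates on $N$.) As a lesser point, your Minkowski content bound is dimensionally off: the $m$-dimensional tubular neighborhood $\{x\in\Mcal\cap\Bbf_1 : d(x,\Sbf)<r\}$ has $\Hcal^{m}$-measure $\lesssim r^2$, not $\Hcal^{m-2}$.
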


\begin{proof}[Proof of Theorem \ref{t:modpreduction}(iv)]
	For any point $x\in \Sbf$, the conclusion follows immediately from conclusion (c) of Proposition \ref{p:onecm} with $\bar{c}_s=\eta$. It remains to verify the conclusion at points outside of $\Sbf$. Taking $r \in ]\eta d (x, \mathbf{S}), 1]$ and $c_s$ as in the statement of the theorem, observe that any cube $L\in\Cscr$ with $L\cap B_r(q,\pi_0)\neq\emptyset$ and $\ell(L) > c_s r > \bar c_s \eta d (x, \mathbf{S})$ would in turn satisfy $L\cap B_{d (x, \mathbf{S}) + r}(\tilde{q},\pi_0)\neq \emptyset$ for some $\Sbf\ni \tilde{x} = \mathbf{p}_{\pi_0}(\tilde{q})$, contradicting conclusion (c) of Proposition \ref{p:onecm} for the choice $\bar{c}_s = \frac{c_s}{1+\frac{1}{\eta}}$.
\end{proof}

The remainder of this article will thus be dedicated to proving the conclusion (v) of Theorem \ref{t:modpreduction}, from which the conclusion of Theorem \ref{thm:modpmain} follows immediately by an elementary covering argument. The conclusion of \ref{t:modpreduction}(v) will be given in Section \ref{s:concl}, at the very end of the article.


Translating by $q$ and rescaling so that $r_0$ is taken to be unit scale, and henceforth denoting $\Mcal_0$ by simply $\Mcal$, we may thus make the following assumptions from now on. 

\begin{assumption}\label{asm:modp2}
	For some fixed (yet arbitrary) positive constants $\varepsilon_4$ and $\eta$, the following holds.
	\begin{itemize}
		\item[(i)] $T$ satisfies Assumption~\ref{asm:modp1} with $\bar n = 1$, $p=2Q$ and $0 \in  \Ffrak_Q(T)$.
		\item[(ii)] There is one interval of flattening $]0,1]$ around $0$ with corresponding $\boldsymbol{m}_{0,0} \equiv \boldsymbol{m}_0 \coloneqq \mathbf{E}^{no} (T, \mathbf{B}_{6\sqrt{m}}) + (6\sqrt{m}\Abf)^2 \leq \varepsilon_4^2$.
		\item[(iii)] If $\Mcal$ is the corresponding center manifold with normal approximation $N$, then $\mathbf{S} :=  \Ffrak_Q(T)\cap \Bbf_{1}$ is contained in the contact set $\mathbf{\Phi}(\mathbf{\Gamma})$ of $\Mcal$ and the excess decay of Proposition \ref{prop:excessdecaymodp} holds at each $q \in \Sbf$, for all scales $r\in]0,1]$ and $Q\llbracket T_q\Mcal \rrbracket \mres\Bbf_{1}$ is the unique flat tangent cone of $T$ at any such $q$.
		\item[(iv)] For every $x\in \mathbf{B}_{1} \cap \mathcal{M}$, the conclusion (iv) of Theorem \ref{t:modpreduction} is valid for all radii $r \in ]\eta\, d (x, \mathbf{S}),1]$ (and hence for all radii $r \in ]0,1]$ when $x \in \Sbf$).
	\end{itemize}
\end{assumption}

\section{Frequency function and radial variations}

Let us begin by introducing the (regularized) frequency function for the $\Mcal$-normal approximation $N$ of $T$ as in Assumption \ref{asm:modp2}. Let $\phi:[0,\infty[ \to \R$ be defined by
\[
\phi (t) =
\left\{
\begin{array}{ll}
	1 \qquad &\mbox{for $0\leq t \leq \frac{1}{2}$}\\
	2-2t \quad &\mbox{for $\frac{1}{2}\leq t \leq 1$}\\
	0 &\mbox{otherwise}\, .
\end{array}
\right.
\]
Let $d:\Mcal \times \Mcal \to [0,\infty[$ denote the geodesic distance on $\Mcal$. We recall the following properties of $d$, which are consequences of the $C^{3,\kappa}$-estimates on the center manifold $\Mcal$ (we refer the reader to \cite{DLS16centermfld} and \cite{DLDPHM} for the details):
\begin{enumerate}[(i)]
	\item $d(x,y) = |x-y| + O\big(\boldsymbol{m}_0^{\frac{1}{2}} |x-y|^2\big)$, 
	\item $|\nabla_y d| = 1 + O\big(\boldsymbol{m}_0^{\frac{1}{2}}d \big)$,
	\item $\nabla^2_y (d^2) = g + O(\boldsymbol{m}_0 d)$, where $g$ is the metric induced on $\mathcal{M}$ by the Euclidean ambient metric.
\end{enumerate}
We now define the following quantities:
\begin{align*}
	\Dbf(x,r) &\coloneqq \int_\Mcal |DN|^2 \phi\left(\frac{d(y, x)}{r}\right) \dd y\, , \\
	\Hbf(x,r) &\coloneqq - \int_\Mcal \frac{|\nabla_y d(y, x)|^2}{d(y, x)} |N|^2\phi'\left(\frac{d(y, x)}{r}\right) \dd y\, \\
	\Ibf(x,r) &\coloneqq \frac{r \Dbf(x,r)}{\Hbf(x,r)}\, .
\end{align*}
We often omit the dependency on $N$ of these quantities, since we are considering one single fixed center manifold $\Mcal$ and associated normal approximation $N$ throughout. However, when it becomes necessary to highlight such dependence (e.g. in compactness arguments), we will write $\Ibf_N$, $\Dbf_N$ and $\Hbf_N$. We refer the reader to~\cite{DLS16blowup} or~\cite{DLDPHM} for more details on the above quantities and their basic properties. Moreover, since in practically all the computations the derivative of $d$ is taken in the variable which is the same as the intergration variable, in all such cases we will write instead $\nabla d$.

We will also need to use the above quantities for Dir-minimizing maps $u: \R^m \supset \Omega \to \Ascr_Q$. For such a map $u$ we define
\begin{align*}
	D_u(x,r) &\coloneqq \int_\Omega |Du|^2 \phi\left(\frac{|y-x|}{r}\right) \dd y\, , \\
	H_u(x,r) &\coloneqq - \int_\Omega \frac{1}{|y-x|} |u|^2\phi'\left(\frac{|y-x|}{r}\right) \dd y\, \\
	I_u(x,r) &\coloneqq \frac{r D_u(x,r)}{H_u(x,r)}\, .
\end{align*}

In addition, we define the scale invariant quantities
\[
\bar{\Hbf}(x,r) \coloneqq r^{-(m-1)}\Hbf(x,r), \qquad \bar{\Dbf}(x,r) \coloneqq r^{-(m-2)} \Dbf(x,r)
\]
and
\begin{align*}
	\Ebf(x,r) &\coloneqq -\frac{1}{r} \int_{\Mcal} \phi'\left(\frac{d(x, y)}{r}\right)\sum_i N_i(y)\cdot DN_i(y)\nabla d(x, y) \dd y\, , \\
	\Gbf(x,r) &\coloneqq -\frac{1}{r^2} \int_{\Mcal} \phi'\left(\frac{d(x, y)}{r}\right) \frac{d(x, y)}{|\nabla d(x, y)|^2} \sum_i |DN_i(y) \cdot \nabla d(x, y)|^2 \dd y\, , \\
	\mathbf{\Sigma}(x,r) &\coloneqq \int_{\Mcal} \phi\left(\frac{d(x, y)}{r}\right)|N(y)|^2\dd y\, .
\end{align*}
We will require the following important lemma, which verifies that the variational identities required for the almost monotonicity of the frequency function $r\mapsto\mathbf{I} (x,r)$ hold indeed for every $x\in \Mcal\cap\Bbf_1$ and for every $r \in ]0,1]$.

\begin{lemma}\label{lem:firstvar}
	There exists $\gamma_4 (m,Q) > 0$ sufficiently small and a constant $C (m,Q) > 0$ such that the following holds. Suppose that $T$, $\Mcal$, $N$ are as in Assumption \ref{asm:modp2}. Then for any $x \in \Mcal\cap\Bbf_1$ and any $r \in ]\eta d(x,\Sbf), 1]$, we have the following identities
	\begin{align*}
		\partial_r \Dbf(x,r) &= - \int_{\Mcal} \vphi'\left(\frac{d(x, y)}{r}\right) \frac{d(x, y)}{r^2} |DN(y)|^2 \ dy \\
		\partial_r \bar\Hbf(x,r)&= O(\boldsymbol{m}_0)\bar{\Hbf}(x,r) + 2r^{-(m-1)}\Ebf(x,r).
	\end{align*}
	\begin{align*}
		&|\Dbf(x,r) - \Ebf(x,r)| \leq \sum_{j=1}^5 |\Err_j^o| \leq C\boldsymbol{m}_0^{\gamma_4}\Dbf(x,r)^{1+\gamma_4} + C\boldsymbol{m}_0\mathbf{\Sigma}(x,r), \\
		&\left|\partial_r\bar\Dbf(x,r) - 2r^{-(m-2)}\Gbf(x,r)\right| \leq 2r^{-(m-2)} \sum_{j=1}^5 |\Err_j^i| + C\boldsymbol{m}_0\bar\Dbf(x,r) \\
		&\qquad \leq Cr^{-1}\boldsymbol{m}_0^{\gamma_4}\bar\Dbf(x,r) + Cr^{-(m-2)+\gamma_4}\boldsymbol{m}_0^{\gamma_4}\Dbf(x,r)^{\gamma_4}\partial_r \Dbf(x,r) +C\boldsymbol{m}_0\bar{\Dbf}(x,r),
	\end{align*}
	where $\Err_j^o$ and $\Err_j^i$ are as in~\cite{DLDPHM}*{Proposition~9.8,~Proposition~9.9}.
\end{lemma}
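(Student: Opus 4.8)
The statement collects the first‑variation identities and frequency‑function estimates of \cite{DLDPHM}*{Section~9}, transplanted to the present mod$(2Q)$ setting, so the proof is a verification that the arguments there go through verbatim under Assumption~\ref{asm:modp2}. The key point that makes this non‑trivial is the range of validity of the radii: the identities must hold for \emph{every} $x\in\Mcal\cap\Bbf_1$ and \emph{every} $r\in\,]\eta\,d(x,\Sbf),1]$, not just for $x$ on the contact set. I would begin by recalling the precise regularity package we are allowed to use: the $C^{3,\kappa}$‑estimates on the center manifold $\Mcal$ (properties (i)--(iii) of the geodesic distance $d$ above), the Taylor expansion of the normal approximation $N$ and the separation/splitting estimates from \cite{DLHMS}*{Section~25}, together with the Lipschitz and $L^2$‑bounds $\|N\|_{C^0}\le C\boldsymbol m_0^{\gamma}$, $\Lip(N)\le C\boldsymbol m_0^{\gamma}$, $\Mbf(\Tbf_F - T\mres\text{(relevant region)})\le C\boldsymbol m_0^{1+\gamma}$. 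These are the inputs needed to estimate the error terms $\Err_j^o$, $\Err_j^i$.

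\textbf{Step 1: the exact identities for $\partial_r\Dbf$ and $\partial_r\bar\Hbf$.} The first identity $\partial_r\Dbf(x,r) = -\int_{\Mcal}\varphi'\!\left(\tfrac{d(x,y)}{r}\right)\tfrac{d(x,y)}{r^2}|DN(y)|^2\,dy$ is a pure calculus fact: differentiate under the integral sign, using only that $y\mapsto d(x,y)$ is Lipschitz and that $\varphi$ is piecewise affine; nothing about minimality enters. For $\partial_r\bar\Hbf$ one differentiates $\bar\Hbf(x,r)=r^{-(m-1)}\Hbf(x,r)$ and rewrites the $r$‑derivative of $\Hbf$ using the change of variables $y\mapsto$ geodesic normal coordinates at $x$; the metric error $g = \nabla_y^2(d^2)+O(\boldsymbol m_0 d)$ from property (iii) produces exactly the $O(\boldsymbol m_0)\bar\Hbf$ term, while integrating by parts in the radial direction and using the first variation of $N$ along $\Mcal$ converts the boundary contribution into $2r^{-(m-1)}\Ebf(x,r)$. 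This is the computation of \cite{DLDPHM}*{Proposition~9.5}; I would simply indicate that the mod$(2Q)$ structure is invisible here because $\Hbf$, $\Dbf$, $\Ebf$ are all defined through $N$ and $\Mcal$, which are honest single‑valued/$\Ascr_Q$‑valued objects.

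\textbf{Step 2: the comparison $|\Dbf-\Ebf|$ and the outer errors.} This is where minimality is used. One tests the stationarity of $T$ (equivalently the almost‑minimality of $\Tbf_N$ relative to $\Mcal$, via the construction in \cite{DLHMS}*{Section~25}) against the outer vector field $X(y) = \varphi\!\left(\tfrac{d(x,y)}{r}\right)\big(y - x\big)^{\mathrm{tangential}}$, or rather its lift to $\Sigma$. The first variation $\delta\Tbf_N(X)$ splits into the leading term $\Dbf(x,r)-\Ebf(x,r)$ plus the five error pieces $\Err_1^o,\dots,\Err_5^o$ coming respectively from: the difference between $T$ and $\Tbf_N$ on the region where they disagree, the curvature of $\Mcal$, the curvature of $\Sigma$, the non‑orthogonality of $N$ to $T_y\Mcal$, and the cut‑off error. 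Each is bounded, exactly as in \cite{DLDPHM}*{Proposition~9.8}, by $C\boldsymbol m_0^{\gamma_4}\Dbf^{1+\gamma_4}+C\boldsymbol m_0\mathbf\Sigma$ using the $C^0$/Lipschitz bounds on $N$, the $C^{3,\kappa}$ bound on $\Mcal$, and Assumption~\ref{asm:modp2}(ii) that $\boldsymbol m_0\le\varepsilon_4^2$. The only modification relative to the classical (oriented, codimension‑one area‑minimizing) case is that the "error between $T$ and $\Tbf_N$" estimate must be quoted from the mod$(p)$ center‑manifold algorithm of \cite{DLHMS} rather than from \cite{DLS16centermfld}; the inequality it yields is of the same form. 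The inner‑variation estimate for $\partial_r\bar\Dbf$ is entirely analogous, now testing against the radial vector field $X(y)=\varphi\!\left(\tfrac{d(x,y)}{r}\right)\tfrac{d(x,y)}{|\nabla d|^2}\nabla d(x,y)$ and using \cite{DLDPHM}*{Proposition~9.9}; it produces the leading term $2r^{-(m-2)}\Gbf(x,r)$ together with the inner errors $\Err_j^i$, whose sum is controlled by the stated combination of $\bar\Dbf$, $\partial_r\Dbf$ and $\Dbf^{\gamma_4}$.

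\textbf{Step 3: validity for the full range of radii.} The only genuinely new issue compared with \cite{DLDPHM} is ensuring the estimates hold down to the scale $r=\eta\,d(x,\Sbf)$, uniformly in $x\in\Mcal\cap\Bbf_1$. The reason this is legitimate here is Assumption~\ref{asm:modp2}(iv) (equivalently Theorem~\ref{t:modpreduction}(iv)): for every such $x$ and every $r\in\,]\eta\,d(x,\Sbf),1]$, every Whitney cube $L$ of the center‑manifold construction meeting $B_r(x,\pi_0)$ has side length $\ell(L)\le c_s r$ with $c_s = \tfrac{1}{64\sqrt m}$. This is precisely the hypothesis under which the center‑manifold estimates of \cite{DLHMS}*{Section~25} — in particular the separation between $\Tbf_N$ and $T$, and the bound $\Lip(N|_{\Bcal_r})\le C\boldsymbol m_0^{\gamma}$ rescaled to scale $r$ — remain valid at scale $r$; thus all the error bounds above self‑improve at every such scale. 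I would spell this out by noting that the Whitney‑cube condition guarantees the normal approximation has the required $C^0$ and Lipschitz decay on $\Bcal_r(x)$ for each admissible $r$, which is the only place where the scale enters the error estimates of Step 2.

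\textbf{Main obstacle.} The substantive point — and the one I would be most careful about — is Step 3: pinning down exactly which estimates from the mod$(p)$ center‑manifold construction of \cite{DLHMS} survive the passage to all scales $r\in\,]\eta d(x,\Sbf),1]$ via the Whitney condition \ref{t:modpreduction}(iv), and checking that the constants $\gamma_4$, $C$ one gets are genuinely independent of $x$ and $r$. The computations in Steps 1–2 are, by contrast, essentially a transcription of \cite{DLDPHM}*{Propositions~9.5,~9.8,~9.9}; the reader should be referred there for the detailed bookkeeping of the error terms, with the present proof limited to identifying the (few) places where the mod$(2Q)$ setting requires citing \cite{DLHMS} instead.
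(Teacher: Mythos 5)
Your proof is correct and follows essentially the same route as the paper: the paper likewise dispatches the lemma by citing the mod$(p)$ first-variation estimates (in its case \cite{DLHMS}*{Proposition~26.4}, with \cite{DLS16blowup} and \cite{DLDPHM} as the integral-current analogues), observing that the constants can be tracked in terms of $\boldsymbol m_0$, and—just as in your Step 3—noting that validity down to scale $r = \eta\,d(x,\Sbf)$ rests only on the Whitney-cube condition of Theorem~\ref{t:modpreduction}(iv). Your write-up simply spells out the structure of the outer/inner variation computation in more detail; the one small bibliographic difference is that the paper points to \cite{DLHMS}*{Proposition~26.4} as the primary mod$(p)$ reference rather than deriving the estimates directly from \cite{DLDPHM} with mod$(p)$ substitutions, but the content is the same.
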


A simple consequence is the following quantitative almost-monotonicity for the frequency (cf. \cite{DLHMSS_structure}*{Proposition 2.7}).

\begin{corollary}\label{cor:freqmono}
	Let $T$, $\Mcal$, $N$ and $\gamma_4$ be as in Lemma \ref{lem:firstvar}. Then there exists $C=C(m,Q)> 0$ such that for any $x\in \Mcal\cap\Bbf_1$ and any $r\in ]\eta d(x,\Sbf),1]$ we have 
	\begin{align*}
		\partial_r\left[1+\log\Ibf(x,r)\right] &\geq -C\boldsymbol{m}_0^{\gamma_4}.
	\end{align*}
\end{corollary}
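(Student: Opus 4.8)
Here is a plan for proving Corollary~\ref{cor:freqmono}.

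\medskip

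The strategy is to differentiate $\log\Ibf$, substitute the two identities and the two error estimates of Lemma~\ref{lem:firstvar}, and reduce the claimed inequality to a Cauchy--Schwarz bound $\Ebf(x,r)^2\le\Hbf(x,r)\,\Gbf(x,r)$. First I would record that, because $\Dbf=r^{m-2}\bar\Dbf$ and $\Hbf=r^{m-1}\bar\Hbf$, one has $\Ibf(x,r)=\tfrac{r\Dbf(x,r)}{\Hbf(x,r)}=\tfrac{\bar\Dbf(x,r)}{\bar\Hbf(x,r)}$, so that
\[
\partial_r\bigl[1+\log\Ibf(x,r)\bigr]=\frac{\partial_r\bar\Dbf(x,r)}{\bar\Dbf(x,r)}-\frac{\partial_r\bar\Hbf(x,r)}{\bar\Hbf(x,r)}.
\]
If $\bar\Hbf(x,r_0)=0$ for some $r_0$ in the range, then $N$ vanishes on an annulus around $x$; since $N$ is, up to the center--manifold errors, $\mathrm{Dir}$-minimizing, a unique-continuation argument forces $N\equiv0$ on $\Mcal\cap\Bbf_1$ and there is nothing to prove, so one may assume $\bar\Hbf(x,r)>0$ (hence also $\bar\Dbf(x,r)>0$) for all $r\in\,]\eta\, d(x,\Sbf),1]$.

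\medskip

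Next I would plug in Lemma~\ref{lem:firstvar}. The $\Hbf$-term is immediate: from $\partial_r\bar\Hbf=O(\boldsymbol{m}_0)\bar\Hbf+2r^{-(m-1)}\Ebf$ and $\bar\Hbf=r^{-(m-1)}\Hbf$ we get $\partial_r\bar\Hbf/\bar\Hbf=2\Ebf/\Hbf+O(\boldsymbol{m}_0)$. For the $\Dbf$-term, the estimate for $\partial_r\bar\Dbf$ in Lemma~\ref{lem:firstvar}, together with $\bar\Dbf=r^{-(m-2)}\Dbf$, the auxiliary relation $\partial_r\Dbf=\tfrac{m-2}{r}\Dbf+2\Gbf+(\text{error})$ obtained by combining the first and third identities, the a~priori Dirichlet bound $\Dbf(x,r)\le C\boldsymbol{m}_0$ from the center--manifold construction, and the same bookkeeping as in \cite{DLHMSS_structure}*{Proposition~2.7}, yields $\partial_r\bar\Dbf/\bar\Dbf\ge \tfrac{2\Gbf}{\Dbf}-C\boldsymbol{m}_0^{\gamma_4}$. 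Then I would use the estimate $|\Dbf-\Ebf|\le C\boldsymbol{m}_0^{\gamma_4}\Dbf^{1+\gamma_4}+C\boldsymbol{m}_0\mathbf{\Sigma}$ of Lemma~\ref{lem:firstvar}, together with $\Dbf\le C\boldsymbol{m}_0$ and the standard comparison $\mathbf{\Sigma}(x,r)\le Cr\Hbf(x,r)$, to replace $\Dbf$ by $\Ebf$ in the denominator at the cost of another $C\boldsymbol{m}_0^{\gamma_4}$, arriving at
\[
\partial_r\bigl[1+\log\Ibf(x,r)\bigr]\ \ge\ \frac{2\Gbf(x,r)}{\Ebf(x,r)}-\frac{2\Ebf(x,r)}{\Hbf(x,r)}-C\boldsymbol{m}_0^{\gamma_4}\ =\ \frac{2\bigl(\Gbf(x,r)\Hbf(x,r)-\Ebf(x,r)^2\bigr)}{\Ebf(x,r)\Hbf(x,r)}-C\boldsymbol{m}_0^{\gamma_4}.
\]

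\medskip

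It then remains to establish $\Ebf(x,r)^2\le\Hbf(x,r)\,\Gbf(x,r)$. This follows from Cauchy--Schwarz applied, first in the index $i$ and then in the integral against the nonnegative weight $-\phi'\!\bigl(d(x,\cdot)/r\bigr)$, to the pointwise identity
\[
\sum_i N_i\cdot\bigl(DN_i\,\nabla d\bigr)=\sum_i\Bigl(\tfrac{|\nabla d|}{\sqrt{d}}\,N_i\Bigr)\cdot\Bigl(\tfrac{\sqrt{d}}{|\nabla d|}\,DN_i\,\nabla d\Bigr),
\]
which gives $\Ebf(x,r)\le \tfrac1r\,\Hbf(x,r)^{1/2}\bigl(r^2\Gbf(x,r)\bigr)^{1/2}=\Hbf(x,r)^{1/2}\Gbf(x,r)^{1/2}$ (also using $\Ebf(x,r)\ge 0$, which follows from $\Dbf\ge0$ and the fact that $|\Dbf-\Ebf|$ is a higher-order error). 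Combining with the previous display gives $\partial_r[1+\log\Ibf(x,r)]\ge -C\boldsymbol{m}_0^{\gamma_4}$, as claimed.

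\medskip

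The main obstacle is the second paragraph, i.e. absorbing the error terms in $\partial_r\bar\Dbf$ --- in particular the one of the form $r^{-(m-2)+\gamma_4}\boldsymbol{m}_0^{\gamma_4}\Dbf^{\gamma_4}\partial_r\Dbf$ and the $r^{-1}$-type contributions --- \emph{uniformly in $r$} all the way down to $r=\eta\, d(x,\Sbf)$. This requires that the variational identities of Lemma~\ref{lem:firstvar} be valid on the whole interval $]\eta\, d(x,\Sbf),1]$, which is exactly where Assumption~\ref{asm:modp2}(iv) (the cube-size control inherited from Theorem~\ref{t:modpreduction}(iv)) is used, and it relies on the $C^{3,\kappa}$-estimates on $\Mcal$ and the $L^2$ and Dirichlet estimates for $N$ from the center--manifold construction. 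Modulo these inputs the computation is the classical one carried out in \cite{DLHMSS_structure}*{Proposition~2.7} (and, in the area-minimizing setting, in \cite{DLS16blowup,DLDPHM}).
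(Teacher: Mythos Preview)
Your proposal is correct and follows exactly the classical frequency--monotonicity computation that the paper invokes by citing \cite{DLSk1}*{Corollary~3.5}: differentiate $\log\Ibf=\log\bar\Dbf-\log\bar\Hbf$, insert the variational identities of Lemma~\ref{lem:firstvar}, and reduce to the Cauchy--Schwarz inequality $\Ebf^2\le\Hbf\,\Gbf$. One small caveat: your dismissal of the case $\bar\Hbf(x,r_0)=0$ via ``unique continuation for $N$'' is not quite legitimate, since $N$ is only an approximation and is not itself Dir-minimizing (and even for genuine $\Ascr_Q$-valued minimizers unique continuation requires extra hypotheses, cf.\ Lemma~\ref{lem:modpuniquecont}); in the references this case is instead handled by first observing that $\Hbf(x,\cdot)>0$ on the whole interval directly from the definition and the structure of $\Sbf$, which makes $\Ibf$ well defined throughout.
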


We omit the proof of Lemma~\ref{lem:firstvar} here, since it involves a mere repetition of the arguments in the proofs of~\cite{DLHMS}*{Proposition~26.4} (see also~\cite{DLS16blowup}*{Proposition 3.5} and \cite{DLDPHM}*{Proposition~9.5,~Proposition~9.10} for the analogous arguments in the integral currents framework), combined with the following observations:
\begin{itemize}
	\item[(1)] one may ensure that the constants therein are optimized to depend on appropriate powers of $\boldsymbol{m}_0$, resulting in the more explicit estimates given above;
	\item[(2)] the validity of the estimates in the lemma at given scale $r$ and a given point $x\in \Mcal\cap\Bbf_1$ merely uses the validity of conclusion (iv) of Theorem \ref{t:modpreduction} at this scale.
\end{itemize}
Meanwhile, for the proof of Corollary \ref{cor:freqmono}, we refer the reader to \cite{DLSk1}*{Corollary 3.5} for the analogous argument in the setting of integral currents, combined with the observation that the proof remains unchanged in the current framework, in light of Lemma \ref{lem:firstvar}.

The bound of Corollary \ref{cor:freqmono} in turn gives a uniform bound for the frequency $\mathbf{I} (x, 4)$ for each $x$ in $\mathbf{B}_1 \cap \mathcal{M}$. In particular, given the validity of the monotonicity of $\mathbf{I}$, we can infer the following upper bound 
\begin{equation}\label{e:bound-Lambda}
	\mathbf{I} (x, r) \leq \Lambda \qquad \forall x\in \mathcal{M}\cap \mathbf{B}_1\, , \forall r \in ]\eta\, d (x, \mathbf{S}), 4],
\end{equation}  
for a suitable constant $\Lambda > 0$ which depends on $T$. Before we proceed, let us first simplify the variational errors in Lemma \ref{lem:firstvar} and record some other useful estimates that will be useful in later sections.

\begin{lemma}\label{lem:simplify}
	For any fixed $\eta>0$ and $\Lambda>0$ as in \eqref{e:bound-Lambda}, if $\varepsilon_4$ is chosen sufficiently small, then there exists a constant $C=C(m,Q,\Lambda,\eta)>0$ (independent of $\eps_4$), the following holds for any $T$ as in Assumption \ref{asm:modp2}, every $x\in \Mcal\cap\Bbf_1$ and any $\rho, r \in ]\eta d (x, \mathbf{S}), 4]$. 
	\begin{align}
		C^{-1} \leq & \Ibf (x,r) \leq \Lambda \label{eq:simplify1} \\  
		\Lambda^{-1} r \Dbf (x,r) \leq & \Hbf (x,r) \leq C r \Dbf (x,r) \label{eq:simplify2} \\
		\mathbf{\Sigma} (x,r) &\leq C r^2 \Dbf (x,r) \label{eq:simplify3} \\
		\Ebf (x,r) &\leq C \Dbf (x,r) \label{eq:simplify4} \\
		\bar\Hbf(x,\rho) &= \bar\Hbf(x,r)\exp\left(-C\int_\rho^r \Ibf(x,s)\frac{\dd s}{s} - O (\boldsymbol{m}_0) (r-\rho)\right) \label{eq:simplify5} \\
		\Hbf (x, r) & \leq C \Hbf (x, \textstyle{\frac{r}{4}}) \label{eq:simplify6} \\
		\Hbf (x,r) & \leq C r^{m+3 - 2\delta_2} \label{eq:simplify7} \\
		\Gbf (x,r) &\leq C r^{-1} \Dbf (x,r) \label{eq:simplify8} \\
		|\partial_r \Dbf (x,r)| & \leq C r^{-1} \Dbf (x,r) \label{eq:simplify9} \\
		|\partial_r \Hbf (x,r)| &\leq C \Dbf (x,r)\, ,  \label{eq:simplify10}
	\end{align}
	Moreover, we have the estimates
	\begin{align}
		|\Dbf (x,r) - \Ebf (x,r)| & \leq C \boldsymbol{m}_0^{\gamma_4} r^{\gamma_4} \Dbf (x,r)\\
		|\partial_r \Dbf (x,r) - (m-2)r^{-1} \Dbf(x,r) - 2\Gbf (x,r)| &\leq C \boldsymbol{m}_0^{\gamma_4} r^{\gamma_4-1} \Dbf (x,r) \label{eq:simplify11}\\
		\partial_r \Ibf (x,r) &\geq - C \boldsymbol{m}_0^{\gamma_4} r^{\gamma_4-1}\, .
	\end{align}
\end{lemma}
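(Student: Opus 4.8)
The plan is to derive all of the listed estimates from Lemma~\ref{lem:firstvar}, the almost-monotonicity of Corollary~\ref{cor:freqmono}, and the a priori frequency bound \eqref{e:bound-Lambda}, exactly as in the integral-currents setting of~\cite{DLDPHM} and~\cite{DLSk1}. Since $\varepsilon_4$ can be taken as small as we like, every term of the form $\boldsymbol{m}_0^{\gamma_4}r^{\gamma_4}$ with $r \le 4$ is small, and so the error terms in Lemma~\ref{lem:firstvar} become genuine perturbations. First I would record \eqref{eq:simplify1}: the upper bound is \eqref{e:bound-Lambda}; the lower bound follows because if $\Ibf(x,r)$ were arbitrarily small then, integrating the almost-monotonicity inequality $\partial_r[1+\log\Ibf] \ge -C\boldsymbol{m}_0^{\gamma_4}$ from $r$ up to $4$, one would get $\Ibf(x,4)$ arbitrarily small too, which (after combining with the lower frequency bounds for $\Dir$-minimizers that survive the center-manifold construction, cf.~\cite{DLDPHM}) is incompatible with $0 \in \Ffrak_Q(T)$ and the fact that $N \not\equiv 0$ at the relevant scales --- the standard argument here is that $\Ibf \ge \alpha(m,Q) > 0$ by the classification of blow-up frequencies together with the excess decay of Proposition~\ref{prop:excessdecaymodp}. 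Then \eqref{eq:simplify2} is just the definition $\Ibf = r\Dbf/\Hbf$ rewritten using \eqref{eq:simplify1}.

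Next I would turn to the ``error absorption'' estimates. From the second bound in Lemma~\ref{lem:firstvar}, $|\Dbf - \Ebf| \le C\boldsymbol{m}_0^{\gamma_4}\Dbf^{1+\gamma_4} + C\boldsymbol{m}_0\mathbf\Sigma$; to convert the $\Dbf^{1+\gamma_4}$ into $r^{\gamma_4}\Dbf$ and to control $\mathbf\Sigma$, I would first prove \eqref{eq:simplify3}, namely $\mathbf\Sigma(x,r) \le Cr^2\Dbf(x,r)$. This is a Poincaré-type inequality on $\Mcal$: one writes $\mathbf\Sigma$ in terms of $|N|^2$ against the cutoff, compares with $\Hbf$ via $\mathbf\Sigma(x,r) \le Cr\,\Hbf(x,r)$ (using the structure of $\phi$ and $\phi'$ and property (ii) of $d$), and then uses \eqref{eq:simplify2} to pass to $r^2\Dbf$; alternatively this is exactly~\cite{DLDPHM}*{equation in Proposition 9.5 ff.} and~\cite{DLHMS}*{Section 26}. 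Also I need the smallness of $\Dbf$ at unit scale: the decay of Proposition~\ref{prop:excessdecaymodp} gives $\Dbf(x,r) \le C\boldsymbol{m}_0$ (up to the usual errors), so $\boldsymbol{m}_0^{\gamma_4}\Dbf^{\gamma_4} \le C\boldsymbol{m}_0^{\gamma_4}r^{\gamma_4}$ with $r \le 4$, yielding $|\Dbf - \Ebf| \le C\boldsymbol{m}_0^{\gamma_4}r^{\gamma_4}\Dbf$, and then \eqref{eq:simplify4} $\Ebf \le C\Dbf$ is immediate. For the radial derivative estimates: \eqref{eq:simplify9} follows from the first identity in Lemma~\ref{lem:firstvar} together with the elementary bound $|\phi'| \le 2$ and the fact that $d(x,y)/r \le 1$ on the support, comparing the resulting integral with $r^{-1}\Dbf(x,r)$; \eqref{eq:simplify8} (i.e.\ $\Gbf \le Cr^{-1}\Dbf$) is the same kind of bound applied to the definition of $\Gbf$, using property (ii) of $d$ to control $|\nabla d|^{-2}$; \eqref{eq:simplify10} follows from $\partial_r\Hbf = \partial_r(r^{m-1}\bar\Hbf) = (m-1)r^{m-2}\bar\Hbf + r^{m-1}\partial_r\bar\Hbf$ together with the second identity in Lemma~\ref{lem:firstvar}, \eqref{eq:simplify2} and \eqref{eq:simplify4}; and \eqref{eq:simplify11} is a direct rewriting of the last inequality block in Lemma~\ref{lem:firstvar} after absorbing the higher-order error using the smallness of $\Dbf$, while the final displayed inequality $\partial_r\Ibf \ge -C\boldsymbol{m}_0^{\gamma_4}r^{\gamma_4-1}$ is Corollary~\ref{cor:freqmono} rewritten (since $\partial_r\log\Ibf = \partial_r\Ibf/\Ibf$ and $\Ibf \ge C^{-1}$).

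For \eqref{eq:simplify5} I would integrate the logarithmic derivative: from $\partial_r\bar\Hbf = O(\boldsymbol{m}_0)\bar\Hbf + 2r^{-(m-1)}\Ebf$, dividing by $\bar\Hbf$, using $2r^{-(m-1)}\Ebf/\bar\Hbf = 2\Ebf/\Hbf$, and then rewriting $\Ebf/\Hbf$ up to error as $\Dbf/\Hbf = \Ibf/r$ via \eqref{eq:simplify4} and the $|\Dbf-\Ebf|$ estimate, gives $\partial_r\log\bar\Hbf = \tfrac{C}{r}\Ibf(x,r) + O(\boldsymbol{m}_0)$ in the sense needed, and integrating from $\rho$ to $r$ yields the stated exponential formula. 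Finally \eqref{eq:simplify6} ($\Hbf(x,r) \le C\Hbf(x,r/4)$) and \eqref{eq:simplify7} ($\Hbf(x,r) \le Cr^{m+3-2\delta_2}$) follow from \eqref{eq:simplify5}: the doubling bound \eqref{eq:simplify6} is immediate since $\Ibf \le \Lambda$ and $r \le 4$ make the exponential factor bounded above and below, and for \eqref{eq:simplify7} I would combine \eqref{eq:simplify2} with the excess decay of Proposition~\ref{prop:excessdecaymodp} --- which after the usual comparison between $\Dbf(x,r)$ and the non-oriented excess $\Ebf^{no}(T,\Bbf_r(x))$ gives $\Dbf(x,r) \le Cr^{m+2-2\delta_2}$ --- so that $\Hbf(x,r) \le Cr\Dbf(x,r) \le Cr^{m+3-2\delta_2}$. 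The main obstacle is not any single one of these --- each is routine given the earlier results --- but rather bookkeeping the interplay between the a priori bounds: one must be careful that the lower frequency bound in \eqref{eq:simplify1} is available \emph{before} it is used to upgrade the error terms, which requires invoking the classification of blow-up frequencies (the mod$(p)$ analogue of the lower bound $\Ibf \ge 1$ for nontrivial $\Dir$-minimizers, valid here because $0$ is a flat singular $Q$-point so the normal approximation cannot vanish to infinite order), and that the smallness used to absorb errors ($\boldsymbol{m}_0^{\gamma_4}r^{\gamma_4} \ll 1$) is uniform over the admissible range $r \in ]\eta d(x,\Sbf),4]$, which is exactly what the choice of $\varepsilon_4$ small (depending on $\eta$, $\Lambda$) buys us.
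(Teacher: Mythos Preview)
Your overall approach matches the paper's: both derive the bulk of the estimates from Lemma~\ref{lem:firstvar}, Corollary~\ref{cor:freqmono}, and the upper frequency bound~\eqref{e:bound-Lambda}, and both single out the lower bound in~\eqref{eq:simplify1} as the one item requiring a separate compactness argument (the paper defers to \cite{DLSk2}*{Lemma~4.1}, with the limiting map now $\Ascr_Q$-valued).

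However, your stated mechanism for the lower frequency bound is backwards. You write that if $\Ibf(x,r)$ were arbitrarily small, then integrating $\partial_r[1+\log\Ibf] \ge -C\boldsymbol{m}_0^{\gamma_4}$ from $r$ up to $4$ would force $\Ibf(x,4)$ to be small. But the almost-monotonicity says $\Ibf$ is almost \emph{increasing} in $r$: integrating gives $\log\Ibf(x,4) \ge \log\Ibf(x,r) - C\boldsymbol{m}_0^{\gamma_4}(4-r)$, which places no upper constraint on $\Ibf(x,4)$ when $\Ibf(x,r)$ is small. What the almost-monotonicity \emph{does} give you is that if $\Ibf(x_k,r_k)\to 0$ along some sequence, then $\Ibf(x_k,\rho)$ is also small for all $\rho \le r_k$ in the admissible range; one then rescales at scale $r_k$, normalizes by $\Hbf^{1/2}$, and passes to a limiting non-trivial $\Ascr_Q$-valued Dir-minimizer $u$ with $\boldsymbol\eta\circ u = 0$ and $I_u(0,1)=0$, which forces $u \equiv Q\llbracket 0\rrbracket$ and contradicts the normalization. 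You do gesture at this with ``classification of blow-up frequencies,'' so the right idea is present, but the monotonicity step as written does not do what you claim. The remaining estimates in your outline are handled correctly and in the same spirit as the paper.
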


The majority of the estimates in Lemma \ref{lem:simplify} follow directly from those in \eqref{e:bound-Lambda}, Lemma \ref{lem:firstvar} and Corollary \ref{cor:freqmono}, with the exception of the lower frequency bound in \eqref{eq:simplify1}. For the proof of this, we direct the reader to \cite{DLSk2}*{Lemma 4.1} (with the obvious difference that the limiting Dir-minimizer takes values in $\Ascr_Q$, not $\Acal_Q$ in the compactness procedure).

Before we go any further, we need to address the following. As well as knowing that $\Sbf \subset \mathbf{\Phi}(\mathbf{\Gamma})$, we will need to know that the frequency $\Ibf(x,0)$ is sufficiently large (namely, above the threshold $2-\delta_2$) for every $x\in\Sbf$. Indeed, this is the case; by studying the linearized problem that arises from a compactness procedure and ruling out the possibility that $\Ibf(x,0) = 1$ (since $x$  is a \emph{flat} singular point of $T$), one can show that $\Ibf(x,0)$ is always a positive integer strictly larger than 1 for each $x\in\Sbf$.

\begin{proposition}[\cite{DLHMSS_structure}*{Proposition 2.8}]\label{prop:highfreq}
	Let $T$, $\Mcal$, $N$ and $\gamma_4$ be as in Lemma \ref{lem:firstvar}. For every $x \in \Sbf$ and any sequence of scales $r_k \todown 0$, the following properties hold (up to subsequence):
	\begin{itemize}
		\item[(i)] $\bar{N}_{x,r_k} \coloneqq \frac{N(\mathbf{e}(x,r_k\cdot))}{\bar\Dbf(x,r_k)^{1/2}}$ converges strongly in $W^{1,2}_\loc$ to an $\Ibf(x,0)$-homogeneous Dir-minimizer $u:\pi_\infty \supset B_1 \to \Ascr_Q(\R)$ with $\eta\circ u = 0$ and $u(0) = Q\llbracket 0 \rrbracket$;
		\item [(ii)]  the frequency $\Ibf(x,0)$ is a positive integer strictly larger than 1.
	\end{itemize}
\end{proposition}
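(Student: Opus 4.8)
The plan is to prove Proposition~\ref{prop:highfreq} by the standard blow-up analysis for the $\Mcal$-normal approximation $N$, modelled on \cite{DLS16blowup}, \cite{DLDPHM} and its mod$(p)$ adaptation in \cite{DLHMS}*{Section~26}, and then to feed the resulting homogeneous blow-up into the codimension-one classification of homogeneous Dir-minimizing special $Q$-valued maps to pin down the frequency. First I would record that, since $x\in\Sbf$ has $d(x,\Sbf)=0$, Assumption~\ref{asm:modp2}(iv) makes the variational identities of Lemma~\ref{lem:firstvar} and all the estimates of Lemma~\ref{lem:simplify} available at $x$ for every $r\in\,]0,1]$; in particular the almost-monotonicity of Corollary~\ref{cor:freqmono} shows that $r\mapsto\Ibf(x,r)$ has a limit $\Ibf(x,0)\coloneqq\lim_{r\todown0}\Ibf(x,r)$, which by \eqref{eq:simplify1} satisfies $C^{-1}\leq\Ibf(x,0)\leq\Lambda$. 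Then I fix $r_k\todown0$ and look at $\bar N_{x,r_k}$: by construction it has (asymptotically) unit rescaled Dirichlet energy on $B_1$, while by \eqref{eq:simplify1} its trace on $\partial B_1$ has $L^2$-norm bounded above and below by positive constants, so up to a subsequence $\bar N_{x,r_k}\toweak u$ weakly in $W^{1,2}_\loc(B_1;\Ascr_Q(\R))$ with $u$ \emph{not} identically $Q\llbracket0\rrbracket$. The constraints $\eta\circ u=0$ and $u(0)=Q\llbracket0\rrbracket$ pass to the limit from the higher-order estimate on $\eta\circ N$ built into the center manifold construction and from the decay $\bar\Hbf(x,r)\to0$ forced by \eqref{eq:simplify7}, together with the fact that $x$ lies in the contact set $\mathbf{\Phi}(\mathbf{\Gamma})$.

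The second step is to upgrade to strong $W^{1,2}_\loc$ convergence and to prove that $u$ is Dir-minimizing, which I would do exactly as in \cite{DLHMS}*{Section~26} (cf.~\cite{DLS16blowup}*{Section~3}, \cite{DLDPHM}*{Section~9}): if some $v$ were a competitor for $u$ on a smaller ball with strictly smaller energy, one lifts it, via the Lipschitz approximation of $T$ over $\Mcal$ and the interpolation and patching estimates, to a competitor for $T_{0,r_k}$ of strictly smaller mass for $k$ large, contradicting area-minimality mod$(2Q)$. Once $u$ is known to be a non-trivial Dir-minimizer with $D_u(0,1)=1$, the quantitative frequency estimates of Lemma~\ref{lem:firstvar} and Lemma~\ref{lem:simplify} show that the frequency of $u$ is constant and equals $\Ibf(x,0)$, and the vanishing in the limit of $\partial_r\bar\Dbf(x,r)-2r^{-(m-2)}\Gbf(x,r)$ (and of the corresponding $\bar\Hbf$-term) forces $u$ to be $\Ibf(x,0)$-homogeneous. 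This proves part~(i).

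For part~(ii), I would invoke the classification of homogeneous Dir-minimizers special to the codimension-one setting (\cite{DLHMSS_structure}*{Proposition~2.9}, resting on the linear theory of \cite{DLHMSlin}), which forces the homogeneity degree of a non-trivial homogeneous Dir-minimizer $u\colon B_1\to\Ascr_Q(\R)$ with $\eta\circ u=0$ to be a positive integer, so $\Ibf(x,0)\in\N$. It then remains to rule out $\Ibf(x,0)=1$: a $1$-homogeneous such $u$ is affine on each sheet and vanishes at $0$, so if the affine pieces are not all equal then the (unique, by Assumption~\ref{asm:modp2}(iii)) tangent cone of $T$ at $x$ would be a superposition of planes of distinct slopes and hence not flat, contradicting $x\in\Ffrak_Q(T)$; and if they are all equal, then $\eta\circ u=0$ forces $u\equiv Q\llbracket0\rrbracket$, contradicting the non-degeneracy established in the first step. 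Hence $\Ibf(x,0)$ is an integer with $\Ibf(x,0)\geq2$, in particular strictly above the threshold $2-\delta_2$, which is part~(ii).

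The step I expect to be the main obstacle is the second one: importing the whole package of center-manifold machinery from \cite{DLHMS}*{Section~26} --- the Lipschitz approximation, the splitting-before-tilting estimates, the higher integrability of the excess density, and the mass comparison between $T$ and the current $\Tbf_N$ induced by $N$ --- into the present reduced framework, and above all making all of these estimates uniform down to the scale $\eta\,d(x,\Sbf)$, so that strong convergence and the competitor argument are genuinely valid at a point $x\in\Sbf$ (where this scale is $0$). By contrast, Step~1 is routine, and the exclusion of frequency $1$ in Step~3 is comparatively soft, though it does make essential use of the correspondence between homogeneous blow-ups of $N$ at $x$ and tangent cones of $T$ at $x$ recorded in Assumption~\ref{asm:modp2}(iii).
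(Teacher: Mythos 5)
The paper does not prove Proposition~\ref{prop:highfreq}; it is quoted directly from \cite{DLHMSS_structure}*{Proposition 2.8}. Your overall blueprint — blow-up of the normalized normal approximation to a Dir-minimizing special $Q$-valued map, strong convergence via the competitor argument imported from \cite{DLHMS}*{Section~26}, and then the codimension-one classification of homogeneous Dir-minimizers to force an integer homogeneity degree — matches the structure of the proof in the reference and the surrounding discussion in the present paper.

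However, your argument for ruling out $\Ibf(x,0)=1$ has a genuine gap. You claim that a $1$-homogeneous blow-up $u$ with distinct affine sheets would force the tangent cone of $T$ at $x$ to be a superposition of planes of distinct slopes, contradicting $x\in\Ffrak_Q(T)$. This implicitly assumes that the tilt at scale $r_k$, namely $\bar\Dbf(x,r_k)^{1/2}/r_k$, stays bounded away from zero. But $\Ibf(x,0)=1$ only guarantees $\bar\Hbf(x,\rho)\sim \rho^2 L(\rho)$ with $L$ slowly varying, and $L(\rho)\to 0$ is perfectly consistent with $\Ibf(x,0)=1$ and with the almost-monotonicity of Corollary~\ref{cor:freqmono}. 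In that regime $\bar\Dbf(x,r_k)^{1/2}/r_k\to 0$, so $T_{x,r_k}\to Q\llbracket \pi\rrbracket$ and the tangent cone stays flat even though the blow-up $u$ of the renormalized $N$ has genuinely distinct linear sheets: no contradiction with flatness arises. What actually closes this case is the decay estimate \eqref{eq:simplify7}, $\Hbf(x,r)\lesssim r^{m+3-2\delta_2}$ (a consequence of the non-oriented excess decay of Proposition~\ref{prop:excessdecaymodp}, which is what uses that $x$ is a flat singular point), combined with the identity \eqref{eq:simplify5}: if $\Ibf(x,0)<2-\delta_2$ one picks $r_0$ so small that $\Ibf(x,s)\leq \Ibf(x,0)+\eps$ on $]0,r_0]$, integrates \eqref{eq:simplify5} to get $\bar\Hbf(x,\rho)\gtrsim (\rho/r_0)^{2(\Ibf(x,0)+\eps)}\bar\Hbf(x,r_0)$, and this is incompatible with $\bar\Hbf(x,\rho)\lesssim \rho^{4-2\delta_2}$ as $\rho\to 0$. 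Hence $\Ibf(x,0)\geq 2-\delta_2$, and the integer classification then gives $\Ibf(x,0)\geq 2$. Your tangent-cone reasoning only covers the subcase $L(\rho)\not\to 0$; the excess-decay route makes the case distinction unnecessary, and is (implicitly) the ingredient the paper has in mind, since \eqref{eq:simplify7} is precisely one of the estimates recorded in Lemma~\ref{lem:simplify} for this purpose.
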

However, the above proposition does not guarantee that there are no points $y\in \mathbf{\Phi}(\mathbf{\Gamma})$ with $\Ibf(y,0) = 1$ nearby to points $x\in\Sbf$ (in fact, we expect any $x\in \Sbf$ to be an accumulation of such points $y$). With this information in mind, we let
\[
\Delta_Q N \coloneqq \set{x \in \Mcal}{N(x) = Q\llbracket 0 \rrbracket}, \qquad \Delta_Q^{2-\delta} N \coloneqq  \set{x \in \Mcal}{N(x) = Q\llbracket 0 \rrbracket, \ \Ibf(x,0) \geq 2-\delta}.
\]
We will use the same notation for Dir-minimizers $u: \R^m \supset \Omega \to \Ascr_Q(\R^n)$. Thus, Proposition \ref{prop:highfreq} enables us to say that for $T$, $\Mcal$, $N$ as in Assumption \ref{asm:modp2}, we have $\Sbf \subset \Delta_Q^{2-\delta} N$.

\section{Spatial frequency variations}\label{ss:variations}
Here, we control how much $N$ deviates from being homogeneous on average locally around a point $x$ between two scales, in terms of the radial frequency variation at $x$ between those scales. It is convenient to introduce the following terminology.

\begin{definition}\label{def:freqpinch}
	Suppose that $T$, $\Mcal$ and $N$ are as in Assumption~\ref{asm:modp2}. For $x \in \Bbf_{1}\cap\Mcal$ and any $\eta\, d (x, \mathbf{S}) < \rho \leq r \leq 1$, define the \emph{frequency pinching} $W_\rho^r(x)$ between $\rho$ and $r$ by
	\[
	W_\rho^r(x) \coloneqq |\Ibf(x,r) - \Ibf(x,\rho)|.
	\]
\end{definition}

\begin{proposition}\label{prop:distfromhomog}
	Suppose that $T$, $\Mcal$, $N$ are as in Assumption~\ref{asm:modp2}, let $\gamma_4$ be as in Lemma~\ref{lem:firstvar} and let $\Lambda>0$ be as in \eqref{e:bound-Lambda}. There exists $C = C(m,n,Q,\Lambda) > 0$ and $\beta = \beta(m,Q,\Lambda) >0$ such that the following estimate holds for every $x \in \Bbf_{1}\cap\Mcal$ and for every pair $\rho, r$ with $4\eta\, d (x, \mathbf{S}) < \rho\leq r < 1$. If we define 
	\begin{align*}
		\Acal_{\frac{\rho}{4}}^{2r}(x)  \coloneqq & \left(\Bbf_{2r}(x)\setminus \cl{\Bbf}_{\frac{\rho}{4}}(x)\right)\cap\Mcal
	\end{align*}
	then
	\begin{align*}
		&\int_{\Acal_{\frac{\rho}{4}}^{2r}(x)} \sum_i \left|DN_i(y)\frac{d(x,y)\nabla d(x,y)}{|\nabla d(x,y)|} - \Ibf(x,d (x,y)) N_i(y)|\nabla d(x,y)|\right|^2 \frac{\dd y}{d(x,y)} \\
		&\qquad\leq C \Hbf(x,2r) \left(W_{\frac{\rho}{4}}^{2r}(x) + \boldsymbol{m}_0^{\gamma_4} r^{\gamma_4}\log\left(\frac{4r}{\rho}\right)\right).
	\end{align*}
\end{proposition}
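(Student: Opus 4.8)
### Proof Strategy for Proposition~\ref{prop:distfromhomog}

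The plan is to deduce the spatial frequency-pinching estimate from the radial almost-monotonicity of the frequency (Corollary~\ref{cor:freqmono} and Lemma~\ref{lem:simplify}) by integrating the pointwise deficit against the annular weight. The starting point is the algebraic identity that expresses the integrand as a combination of the quantities $\Dbf$, $\Ebf$, $\Gbf$, $\Hbf$ appearing in the first variation. Concretely, expanding the square
\[
\sum_i \left|DN_i(y)\frac{d\,\nabla d}{|\nabla d|} - \Ibf(x,d)\,N_i(y)|\nabla d|\right|^2
\]
yields three terms: $\sum_i |DN_i\cdot \nabla d|^2 d^2/|\nabla d|^2$, the cross term $-2\Ibf(x,d) d\sum_i N_i\cdot DN_i\nabla d$, and $\Ibf(x,d)^2 |N|^2 |\nabla d|^2$. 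After dividing by $d(x,y)$ and integrating over the dyadic annuli covering $\Acal_{\rho/4}^{2r}(x)$, each of these matches (up to the errors controlled in Lemma~\ref{lem:firstvar}) a derivative in $s$ of one of the scale-invariant quantities $\bar\Dbf(x,s)$, $\Ebf(x,s)$, $\bar\Hbf(x,s)$ evaluated via the cutoff $\phi'$. This is the standard ``$|Du - \lambda u|^2$'' computation from the blow-up analysis, and I would organize it exactly as in \cite{DLS16blowup} and \cite{DLDPHM}, adapted to the geodesic distance using properties (i)--(iii) of $d$.

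The second step is to pass from the cutoff-smeared version to the sharp annular integral. Since $\phi'$ is supported in $[\tfrac12,1]$, the quantity $-\int_\Mcal \phi'(d/s)(\cdots)\,\dd y$ controls the integral over the annulus $\{s/2 < d(x,y) < s\}$; summing over a geometric sequence of scales $s_k = 2^{-k}\cdot 2r$ down to $\rho/4$ (there are $O(\log(4r/\rho))$ of them) recovers the full annulus $\Acal_{\rho/4}^{2r}(x)$. At each scale the leading contribution telescopes into $\partial_s[\text{frequency-type ratio}]$, whose integral over $[\rho/4, 2r]$ is exactly $W_{\rho/4}^{2r}(x)$ after using $\Hbf(x,s)\sim s\Dbf(x,s)$ and the comparison $\bar\Hbf(x,s)\sim \bar\Hbf(x,2r)$ from \eqref{eq:simplify5}--\eqref{eq:simplify6} (this is where the uniform frequency bound $\Lambda$ and the doubling \eqref{eq:simplify6} enter, producing the prefactor $\Hbf(x,2r)$ rather than a scale-dependent normalization). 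The accumulated error terms are, by Lemma~\ref{lem:firstvar} and \eqref{eq:simplify11}, bounded by $C\boldsymbol{m}_0^{\gamma_4}s^{\gamma_4}\Dbf(x,s)$ at each scale; summing the geometric series and using $s\Dbf(x,s)\le C\Hbf(x,s)\le C\Hbf(x,2r)$ gives the claimed $C\Hbf(x,2r)\boldsymbol{m}_0^{\gamma_4}r^{\gamma_4}\log(4r/\rho)$ term. One sets $\beta$ to be a fixed power coming from the interpolation between the $\boldsymbol{m}_0^{\gamma_4}$-errors and the frequency bound, and shrinks $\eps_4$ so that the errors are genuinely lower order.

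The main obstacle I anticipate is bookkeeping the geodesic-distance corrections uniformly across all the dyadic scales between $\rho/4$ and $2r$, since $\log(4r/\rho)$ can be large and a naive scale-by-scale error of size $O(\boldsymbol{m}_0^{1/2})$ would not sum. The resolution is that every such correction actually carries a factor of $d(x,y)\le 2r$ (from properties (i)--(iii), e.g.\ $|\nabla d|^2 = 1 + O(\boldsymbol{m}_0^{1/2}d)$ and $\nabla^2(d^2) = g + O(\boldsymbol{m}_0 d)$), so the per-scale error is $O(\boldsymbol{m}_0^{1/2} s)$ relative to the main term and these are summable; but one must track this carefully rather than absorbing corrections crudely. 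A secondary technical point is ensuring the constraint $4\eta\,d(x,\Sbf) < \rho$ is enough to invoke Lemma~\ref{lem:firstvar} and Corollary~\ref{cor:freqmono} on the entire range of scales $[\rho/4, 2r]\subset ]\eta d(x,\Sbf),1]$ — this is exactly why the hypothesis is stated with the factor $4$, and I would remark on it explicitly. Apart from these, the argument is a direct transcription of the corresponding lemma in \cite{DLSk2} (or \cite{DLMSV}*{Section 7} in the higher-codimension integral setting), with the only novelty being that the limiting object in any compactness step takes values in $\Ascr_Q$ rather than $\Acal_Q$, which does not affect the Dirichlet-energy computations used here.
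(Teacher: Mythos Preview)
Your outline is correct and matches the approach the paper adopts: the paper does not give an independent proof but simply refers to \cite{DLSk2}*{Proposition~5.2}, noting that the only input needed is the package of estimates in Lemma~\ref{lem:simplify} (replacing \cite{DLSk2}*{Lemma~4.1}), and your sketch is precisely a summary of that argument. The one cosmetic remark is that in the standard presentation the three terms from the expanded square are not matched individually to $\partial_s\bar\Dbf$, $\partial_s\Ebf$, $\partial_s\bar\Hbf$, but rather combined into the single nonnegative quantity $\Gbf(x,s)\Hbf(x,s)-\Ebf(x,s)^2$, which via Lemma~\ref{lem:firstvar} equals $\tfrac{\Hbf(x,s)^2}{2s}\partial_s\Ibf(x,s)$ up to the $\boldsymbol{m}_0^{\gamma_4}s^{\gamma_4-1}$ error; integrating this in $s$ over $[\rho/4,2r]$ and using the coarea-type identity for $\phi'$ then gives the annular integral directly, without an explicit dyadic decomposition---but your dyadic summation is an equivalent bookkeeping and leads to the same bound.
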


We refer the reader to \cite{DLSk2}*{Proposition 5.2} for the proof of Proposition \ref{prop:distfromhomog}. Since we just require the estimates in Lemma \ref{lem:simplify} to prove this proposition (in place of the estimates \cite{DLSk2}*{Lemma 4.1}), the proof remains completely unchanged in this setting. We will also require the following control on variations of the frequency in terms of frequency pinching.
\begin{lemma}\label{lem:spatialvarI}
	Suppose that $T$, $\Mcal$ and $N$ be as in Assumption~\ref{asm:modp2}, let $\gamma_4$ be as in Lemma \ref{lem:firstvar} and let $\Lambda > 0$ be as in \eqref{e:bound-Lambda}. Let $x_1,x_2 \in \Bbf_1 \cap \Mcal$ with $d(x_1,x_2) \leq \frac{r}{8}$, where $r$ is such that $8\eta\, \max\{d (x_1, \mathbf{S}), d (x_2, \mathbf{S})\} < r \leq 1$. Then there exists a constant $C= C(m,Q,\Lambda) > 0$ such that for any $z,y \in [x_1,x_2]$, we have
	\[
	|\Ibf(y,r) - \Ibf(z,r)| \leq C \left[\left(W_{\frac{r}{8}}^{4r}(x_1)\right)^{\frac{1}{2}} + \left(W_{\frac{r}{8}}^{4r}(x_2)\right)^{\frac{1}{2}} + \boldsymbol{m}_0^{\frac{\gamma_4}{2}} r^{\frac{\gamma_4}{2}}\right]\frac{d (z,y)}{r}\, .
	\]
\end{lemma}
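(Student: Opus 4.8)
The goal is to control the difference $\Ibf(y,r) - \Ibf(z,r)$ for $z,y$ on the segment $[x_1,x_2]$ by the frequency pinchings of the two endpoints $x_1,x_2$ plus an error term, with a factor $\tfrac{d(z,y)}{r}$ which makes the estimate a quantitative Lipschitz bound along the segment. My strategy is the standard one: differentiate $\Ibf(\cdot, r) = r\Dbf(\cdot,r)/\Hbf(\cdot,r)$ in the spatial variable along a straight line in the center manifold, integrate, and bound the resulting integrand. Write $v := \frac{x_2 - x_1}{|x_2-x_1|}$ (or rather the unit speed vector, accounting for the center-manifold metric via property (i)–(iii) of $d$), so that it suffices to estimate $|\partial_v \Ibf(w,r)|$ for $w \in [x_1,x_2]$ and then integrate in arclength from $z$ to $y$, picking up the factor $d(z,y)$. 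Since $r$ is comparable along the whole segment (as $d(x_1,x_2)\le r/8$ and each $x_i$ satisfies $8\eta\,d(x_i,\Sbf) < r$, so also $w$ satisfies the standing constraint $\eta\, d(w,\Sbf) < r$ after adjusting constants, using the triangle inequality), the quantities $\Dbf(w,r)$, $\Hbf(w,r)$ are all comparable to those at $x_1$, and Lemma~\ref{lem:simplify} applies uniformly along the segment.

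First I would compute $\partial_v \Ibf(w,r) = \frac{r\,\partial_v \Dbf(w,r)}{\Hbf(w,r)} - \frac{r\,\Dbf(w,r)\,\partial_v \Hbf(w,r)}{\Hbf(w,r)^2}$. The spatial derivatives of $\Dbf$ and $\Hbf$ at $w$ in direction $v$ fall on the cutoff $\phi(d(y,w)/r)$ (and on the weight $|\nabla d|^2/d$ in $\Hbf$), producing integrals of the form $\frac1r\int_\Mcal \phi'(d(y,w)/r)\,\partial_v d(y,w)\,|DN|^2\,dy$ and similarly with $|N|^2$. The crucial point — this is where Proposition~\ref{prop:distfromhomog} enters — is that $\phi'$ is supported on the annulus $\{r/2 \le d(y,w) \le r\}$, so after adding and subtracting the homogeneous model $\Ibf(w, d(w,y))\,N_i(y)$ inside these integrals (using $\partial_v d \approx -\frac{\nabla d \cdot (\text{something})}{|\nabla d|}$ together with property (ii) of $d$), the "non-homogeneous remainder" is exactly what Proposition~\ref{prop:distfromhomog} bounds in $L^2$ on the annulus $\Acal_{\rho/4}^{2r}(w)$. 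Applying Cauchy--Schwarz to pair that remainder against $N$ (resp. $DN$), and then using $\bar\Hbf$-comparability across scales (\eqref{eq:simplify5}, \eqref{eq:simplify6}) to absorb the ratio $\Hbf(w,2r)/\Hbf(w,r)\le C$, one gets $|\partial_v \Ibf(w,r)| \le \frac{C}{r}\big(W_{r/8}^{4r}(w)^{1/2} + \boldsymbol{m}_0^{\gamma_4/2} r^{\gamma_4/2}\big)$. Finally, a triangle-inequality / monotonicity comparison bounds $W_{r/8}^{4r}(w)$ for $w\in[x_1,x_2]$ by $C\big(W_{r/8}^{4r}(x_1) + W_{r/8}^{4r}(x_2) + \boldsymbol{m}_0^{\gamma_4}r^{\gamma_4}\big)$ — here one uses that the almost-monotone quantity $1 + \log\Ibf$ (Corollary~\ref{cor:freqmono}) at nearby centers, evaluated at scales differing by a bounded factor, can be compared, together with Lemma~\ref{lem:spatialvarI} in its weaker form at a single radius or a direct estimate on $\Ibf(w,4r) - \Ibf(x_i,4r)$ and $\Ibf(w,r/8)-\Ibf(x_i,r/8)$ via the same spatial-variation machinery at those two fixed radii (a bootstrap: the estimate for generic $z,y$ at radius $r$ is deduced from the same estimate at radii $4r$ and $r/8$ applied with the segment endpoints, which is legitimate since the pinching terms there are what appear on the right-hand side). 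Integrating $|\partial_v \Ibf(w,r)|$ along $[z,y]$ and using $d(z,y)\le d(x_1,x_2)\le r/8$ to keep all pinchings at the fixed endpoints yields the claim.

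**Main obstacle.** The genuinely delicate point is the matching of scales and of centers: the spatial derivative at a point $w$ naturally produces a pinching $W_{r/8}^{4r}(w)$ centered at $w$, whereas the statement demands the pinchings at the \emph{fixed} endpoints $x_1,x_2$. Converting between these requires a self-improving argument — one must show that $\Ibf(w,s)$ for $w\in[x_1,x_2]$ and $s\in\{4r, r/8\}$ differs from $\Ibf(x_i,s)$ by a term controlled by the very quantities on the right-hand side, which is essentially the assertion of the lemma at those two larger/smaller scales. Making this non-circular (e.g. by proving it first at a discrete set of dyadic scales by induction downward, or by a continuity/open–closed argument in $r$) is the technical heart; the rest is bookkeeping with Proposition~\ref{prop:distfromhomog}, Cauchy–Schwarz, and the comparability estimates \eqref{eq:simplify1}--\eqref{eq:simplify10}. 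I also expect minor care to be needed with the center-manifold geometry (replacing Euclidean segments by geodesics, and $\partial_v d$ by its leading term) but property (i)–(iii) of $d$ reduces all such errors to $O(\boldsymbol{m}_0^{1/2})$ terms that are absorbed into the $\boldsymbol{m}_0^{\gamma_4/2}r^{\gamma_4/2}$ error.
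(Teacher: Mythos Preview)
Your overall architecture is right --- differentiate $\Ibf(\cdot,r)$ along the geodesic segment, bound the derivative using Proposition~\ref{prop:distfromhomog} via Cauchy--Schwarz, and integrate --- but the central computation is wrong in a way that matters.

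You write that ``the spatial derivatives of $\Dbf$ and $\Hbf$ at $w$ in direction $v$ fall on the cutoff $\phi(d(y,w)/r)$,'' producing integrals like $\tfrac1r\int \phi'\,\partial_v d\,|DN|^2$. That is only the \emph{naive} derivative. In that form there is no way to ``add and subtract the homogeneous model $\Ibf\, N_i$'': the integrand is $|DN|^2$, which mixes all directional derivatives of $N$, whereas Proposition~\ref{prop:distfromhomog} controls only the \emph{radial} derivative minus $\tfrac{\Ibf}{d}N$. The paper instead invokes Lemma~\ref{lem:spatialvarDH}, which rewrites the spatial derivatives using the inner first variation (for $\Dbf$) and a change of variables (for $\Hbf$) as
\[
\partial_v \Dbf(w,r) \;=\; \tfrac{2}{r}\!\int \phi'\,\sum_i \partial_{\nu_w} N_i \cdot \partial_v N_i \;+\;O(\boldsymbol{m}_0^{\gamma_4})r^{\gamma_4-1}\Dbf,
\qquad
\partial_v \Hbf(w,r) \;=\; -2\!\int \tfrac{|\nabla d|^2}{d}\phi'\,\sum_i \partial_v N_i\cdot N_i .
\]
Only after this rewriting does the combination $\tfrac{r\,\partial_v \Dbf}{\Hbf}-\tfrac{\Ibf\,\partial_v\Hbf}{\Hbf}$ collapse to $\tfrac{2}{\Hbf}\int\phi'\big[\partial_{\nu_w}N_i-\tfrac{\Ibf}{d_w}N_i|\nabla d_w|^2\big]\cdot\partial_v N_i$ plus lower order, and \emph{then} Cauchy--Schwarz pairs the bracket against $\partial_v N$ and Proposition~\ref{prop:distfromhomog} applies. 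Your plan skips precisely the step (the use of the minimality of $T$ through the inner variation) that makes this work.

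On your ``main obstacle'': the conversion from pinching at $w$ to pinching at $x_1,x_2$ is genuine, but the dyadic/open--closed bootstrap you sketch is not how it is done and would be hard to close without circularity. The cleaner route is geometric: since $w=(1-t)x_1+tx_2$, one has the exact identity $d_w\,\partial_{\nu_w}N=(1-t)\,d_{x_1}\partial_{\nu_{x_1}}N+t\,d_{x_2}\partial_{\nu_{x_2}}N$ (Euclidean; up to $O(\boldsymbol{m}_0^{1/2})$ on $\Mcal$), so the bracket above splits into the two endpoint brackets --- each controlled by Proposition~\ref{prop:distfromhomog} at $x_j$ on the annulus $\Acal_{r/8}^{4r}(x_j)$, yielding $W_{r/8}^{4r}(x_j)$ directly --- plus a residual $\big[(1-t)\Ibf(x_1,d_{x_1})+t\Ibf(x_2,d_{x_2})-\Ibf(w,r)\big]N$. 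The part of this residual coming from the radial drift at fixed center is bounded by $W_{r/8}^{4r}(x_j)$; the remaining piece $|\Ibf(x_j,r)-\Ibf(w,r)|$ is absorbed by a Gronwall argument along the segment (length $\le r/8$), the exponential factor being harmlessly absorbed into $C=C(m,Q,\Lambda)$.
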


The proof of Lemma \ref{lem:spatialvarI} relies on the following additional variation estimates and identities.

\begin{lemma}\label{lem:spatialvarDH}
	Let $T$, $\Mcal$ and $N$ be as in Assumption~\ref{asm:modp2} and let $x \in \Bbf_{1} \cap \Mcal$. Let $\eta\, d (x, \mathbf{S}) < \rho < r \leq 1$, and suppose that $v$ is a vector field on $\Mcal$. Then the following identities hold:
	\begin{align*}
		\partial_v \Dbf(x,r) &= \frac{2}{r} \int \phi'\left(\frac{d(x,y)}{r}\right) \sum_i \partial_{\nu_x} N_i(y)\cdot \partial_v N_i(y) \dd y  + O\left(\boldsymbol{m}_0^{\gamma_4}\right)r^{\gamma_4 -1}\Dbf(x,r)\\
		\partial_v \Hbf(x,r) &= - 2 \sum_i \int_\Mcal \frac{|\nabla d(x,y)|^2}{d(x,y)}\phi'\left(\frac{d (x,y)}{r}\right)\langle \partial_v N_i(y), N_i(y) \rangle \dd y\, . 
	\end{align*}
\end{lemma}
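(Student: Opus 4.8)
The plan is to prove both identities as first‑variation (domain‑deformation) formulas in the \emph{center} variable $x$, entirely parallel to the radial variations collected in Lemma~\ref{lem:firstvar} but differentiating in a tangential direction $v$ rather than in $r$. The computation is the one carried out for the integral‑current center manifold in \cite{DLDPHM} and \cite{DLSk2} (see also \cite{DLS16blowup}), and, as throughout this program, the $\mathrm{mod}(p)$ setting changes nothing: $\Dbf$, $\Hbf$ and the normal approximation $N$ live on the same center manifold $\Mcal$ and obey the same $C^{3,\kappa}$‑estimates. So I would only set up the computation and indicate where the stated errors come from.

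Fix $x\in\Bbf_1\cap\Mcal$ and $r\in\,]\eta\,d(x,\Sbf),1]$, extend $v$ to a $C^1$ vector field near $x$ in $\Mcal$, and let $\Phi_t$ be its flow, so that $\partial_v F(x)=\frac{d}{dt}\big|_{t=0}F(\Phi_t(x))$. For $\Hbf$: in $\Hbf(\Phi_t(x),r)$ only the geometric weight $\frac{|\nabla_y d(\Phi_t(x),y)|^2}{d(\Phi_t(x),y)}\phi'\!\big(\tfrac{d(\Phi_t(x),y)}{r}\big)$ depends on $t$, the factor $|N(y)|^2$ does not; differentiating under the integral and then integrating by parts in $y$ on $\Mcal$ — which is lossless here, since the weight is a function of $d(x,\cdot)$ alone and so its $x$‑derivative is an exact $y$‑divergence up to the divergence of the chosen extension of $v$ — transfers the derivative onto $|N(y)|^2$, produces $\nabla_y|N|^2=2\sum_i\langle N_i,DN_i\rangle$, and yields exactly the stated identity for $\partial_v\Hbf$; no second derivatives of $N$ can enter. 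For $\Dbf$: here $|DN|^2$ carries no $x$‑dependence, so after pushing the $x$‑dependence onto the integrand by the change of variables $y=\Phi_t(w)$ and differentiating, the dominant contribution $2\int_\Mcal\sum_i\langle DN_i,D\partial_v N_i\rangle\,\phi\!\big(\tfrac{d(x,\cdot)}{r}\big)$ arises from $\tfrac{d}{dt}\big|_0|D(N\circ\Phi_t)|^2$; integrating this by parts in $y$ moves one derivative across $DN_i$ and gives the advertised leading term $\tfrac{2}{r}\int\phi'\!\big(\tfrac{d(x,\cdot)}{r}\big)\sum_i\partial_{\nu_x}N_i\cdot\partial_v N_i$ together with a term containing the intrinsic Laplacian $\Delta_\Mcal N_i$ and further terms carrying $Dv$, $\Div_\Mcal v$, the second fundamental form of $\Mcal$ and $\tfrac{d}{dt}\big|_0 d(\Phi_t x,\Phi_t y)$ — all of which vanish when $\Mcal$ is flat and $v$ is a Euclidean translation.

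The only real work, and the main obstacle, is to bound this last collection of terms by $C\boldsymbol{m}_0^{\gamma_4}r^{\gamma_4-1}\Dbf(x,r)$. The purely geometric part is a routine application of the center‑manifold estimates recalled above — $d(x,y)=|x-y|+O(\boldsymbol{m}_0^{1/2}|x-y|^2)$, $|\nabla d|=1+O(\boldsymbol{m}_0^{1/2}d)$, $\nabla^2(d^2)=g+O(\boldsymbol{m}_0 d)$ — combined with the interior and outer error bounds $\Err_j^o,\Err_j^i$ of \cite{DLDPHM}*{Proposition~9.8,~Proposition~9.9} already invoked in Lemma~\ref{lem:firstvar}; the term carrying $\Delta_\Mcal N$ is controlled by the approximate Euler--Lagrange equation for $N$ — i.e. by the fact that $N$ is an almost Dir‑minimizing normal approximation whose first‑variation errors are bounded in terms of $\boldsymbol{m}_0$ — exactly as in \cite{DLHMS}*{Proposition~26.4} and its antecedents; and the passage to the explicit power $\boldsymbol{m}_0^{\gamma_4}r^{\gamma_4-1}$ uses the same Hölder‑type interpolation underlying Lemma~\ref{lem:firstvar}. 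The only hypothesis needed at a given scale $r$ and point $x$ is the validity there of conclusion~(iv) of Theorem~\ref{t:modpreduction}, which is available precisely for $r\in\,]\eta\,d(x,\Sbf),1]$; this is exactly the range in which the lemma is stated, so no further assumptions are required. I expect the delicate (if unsurprising) point to be the bookkeeping of the geometric corrections — tracking which powers of $\boldsymbol{m}_0^{1/2}$ and factors of $d$ each one carries — so as to land on $\boldsymbol{m}_0^{\gamma_4}r^{\gamma_4-1}$ uniformly in $x$ and $r$.
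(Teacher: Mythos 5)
Your proposal gets the broad architecture right — differentiate in the center variable, integrate by parts, and control the geometric correction terms via the center-manifold estimates and the error bounds already used in Lemma~\ref{lem:firstvar}. However, there is one genuine gap, and one point where your route departs from the paper's in a way that needs a comment.

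\textbf{The gap: you dismiss the mod$(p)$ structure, but that is the entire content of the adaptation.} You write that ``the $\mathrm{mod}(p)$ setting changes nothing,'' and this is precisely what the paper's proof is at pains to check. The normal approximation $N$ takes values in $\Ascr_Q$, not $\Acal_Q$, so $N$ carries a sign; across the interface where that sign changes, $N$ is not a classical $\Acal_Q$-valued Sobolev map. Both of your integration-by-parts steps — transferring the derivative from the weight onto $|N|^2$ in the $\Hbf$ computation, and moving a derivative across $DN_i$ in the $\Dbf$ computation — require knowing that the integrand is weakly differentiable in the classical sense on the region of integration, and a priori they would produce boundary terms along the sign-change set. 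The paper's proof handles this by decomposing the domain into $\Mcal_+$ and $\Mcal_-$ (the regions of constant sign), observing that each is relatively open by \cite{DLHMSlin}*{Corollary~2.8}, and that the remaining set $\Mcal_0=\{N=Q\llbracket 0\rrbracket\}$ has $\Hcal^m$-measure zero, so the interface contributes nothing. This is the only step where the mod$(p)$ framework actually intervenes, and it is what makes the identities \cite{DLSk2}*{(29), (32), (33)} carry over. Asserting the opposite removes the new content of the lemma and leaves the computation unjustified.

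\textbf{The departure from the paper's route for $\partial_v\Dbf$.} You propose a direct calculation: flow by $v$, change variables, differentiate $|D(N\circ\Phi_t)|^2$, integrate by parts, and then bound the resulting $\Delta_\Mcal N$ term ``by the approximate Euler--Lagrange equation for $N$.'' The paper instead tests the inner first variation of the \emph{current} — identity \cite{DLS16blowup}*{(3.25)} — with the extrinsic vector field $X(q)=Y(\mathbf p(q))$, $Y(y)=\phi\bigl(\tfrac{d(x,y)}{r}\bigr)v$, and then invokes the excess decay of Proposition~\ref{prop:excessdecaymodp} to bound the error terms $\widetilde{\Err}^i_j$ of \cite{DLHMS}*{(26.9), (26.16)--(26.18)}. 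These are close in spirit, but they are not the same argument: $N$ does not satisfy an Euler--Lagrange equation of its own, and the ``almost-minimality'' of $N$ is only available \emph{because} one tests the first variation of $T$; the paper's phrasing is the mechanism, not an alternative to it. Your version would still need, implicitly, to pass through the first variation of $T$ to control the terms you label as coming from $\Delta_\Mcal N$, so it is worth recognizing that what you call ``the only real work'' is essentially the step the paper packages as testing \cite{DLS16blowup}*{(3.25)}. On the $\Hbf$ side your computation does match the paper (which in turn follows \cite{DLMSV}*{Proposition~3.1}), modulo the $\Mcal_\pm$ decomposition noted above.

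In short: restore the $\Mcal_+/\Mcal_-$ decomposition (with the openness and measure-zero facts it relies on) before any integration by parts, and recognize that the $\partial_v\Dbf$ identity is obtained by testing the inner first variation of $T$ with the cutoff vector field, not by appealing to an Euler--Lagrange equation for $N$.
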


\begin{proof}[Proof of Lemma~\ref{lem:spatialvarDH}]
	The proof of these estimates is entirely analogous to those in \cite{DLSk2}*{Lemma 5.5}, so we omit many of the details. Indeed, notice that the identity \cite{DLSk2}*{(29)} still holds here, by decomposing the domain of the integral in $\Dbf(x,r)$ into the disjoint components of $\Bbf_r(x)\cap \Mcal_+$ and $\Bbf_r(x)\cap\Mcal_-$ as defined in \cite{DLHMS}, each of which is a relatively open set by~\cite{DLHMSlin}*{Corollary~2.8} (which also holds for a submanifold domain with $C^{3,\kappa}$-regularity). Note that the set $\Bbf_r(x)\cap\Mcal_0$ where $N=Q\llbracket 0 \rrbracket$ as defined in \cite{DLHMS} (not to be confused with our former terminology of this form) has $\Hcal^m$-measure zero.
	
	Now we test~\cite{DLS16blowup}*{(3.25)} with the vector field $X_i(q) = Y(\mathbf{p}(q))$ for 
	\[
	Y(y) \coloneqq \phi\left(\frac{d(x,y)}{r}\right) v,
	\]
	which satisfies the differential identities \cite{DLSk2}*{(32), (33)}, and exploit the excess decay of Proposition \ref{prop:excessdecaymodp} to establish the same estimates on the inner variational errors $\widetilde{\Err}_j^i$ therein (see \cite{DLHMS}*{(26.9),~(26.16),~(26.17),~(26.18)}) for this vector field here.
	
	The identity for $\partial_v\Hbf(x,r)$ is once again a simple computation, identical to that in the proof of~\cite{DLMSV}*{Proposition~3.1}, again combined with the domain decomposition into $\Mcal_+$ and $\Mcal_-$.
\end{proof}

Having established the validity of Lemma \ref{lem:spatialvarDH}, the proof of Lemma~\ref{lem:spatialvarI} follows in exactly the same way as that of \cite{DLSk2}*{Lemma 5.4}, yet again decomposing the domains of integration in $\Dbf(x,r)$, $\Hbf(x,r)$ into $\Mcal_+$ and $\Mcal_-$ when taking spatial derivatives of $\Ibf(x,r)$, and noting that the estimates remain unchanged.

\section{Quantitative spine splitting}\label{ss:splitting}
We will now demonstrate that under the assumption of approximate homogeneity around a collection of points spanning a given subspace, one achieves the existence of an approximate spine in that subspace, in a quantitative manner. We will be considering affine subspaces spanned by families of linearly independent vectors, so we introduce the following notation. Given an ordered set of points $X= \{x_0, x_1, \ldots, x_k\}$ we denote by $V (X)$ the affine subspace spanned by the vectors $\{x_1-x_0, x_2-x_0, \ldots, x_k -x_0\}$ and centered at $x_0$:
\begin{equation}\label{e:def-V(X)}
	V (X) = x_0 + \spn (\{(x_1-x_0), (x_2-x_0), \ldots, (x_k-x_0)\})\, .
\end{equation}

We recall the following definitions of quantitative linear independence and spanning from \cite{DLSk2}, where they are introduced in the integral currents framework.

\begin{definition}[\cite{DLSk2}*{Definition 6.1}]\label{def:LI}
	We say that a set $X = \{x_0, x_1,\dots,x_k\} \subset \Bbf_r(x)$ is $\rho r$-linearly independent if 
	\[
	d(x_i, V (\{x_0, \ldots, x_{i-1}\})) \geq \rho r \qquad \mbox{for all $i=1, \ldots , k$}
	\]
	We say that a set $F \subset \Bbf_r(x)$ $\rho r$-spans a $k$-dimensional affine subspace $V$ if there is a $\rho r$-linearly independent set of points $ X= \{x_i\}_{i=0}^k \subset F$ such that $V= V (X)$.
\end{definition}

\subsection{Compactness and homogeneity}
Before we proceed, we require the following invariance and unique continuation results, which are analogous to their counterparts in \cite{DLMSV} in the framework of $\Acal_Q$-valued Dir-minimizers, but we must verify that the presence of codimension one zeros in this setting does not pose an obstruction to their validity.
\begin{lemma}\label{lem:modphomog}
	Let $\Omega \subset \R^m$ be a connected open set and let $u: \Omega \to \Ascr_Q(\R^n)$ be a continuous map that is homogeneous about two points $x_1$ and $x_2$, with respective homogeneities $\alpha_1$ and $\alpha_2$.
	
	Then $\alpha_1 = \alpha_2$ and $u$ is invariant along the direction $x_2 - x_1$. Moreover, we have $x_1 +\spn\{x_2 - x_1\} \subset \Delta_Q u$.
\end{lemma}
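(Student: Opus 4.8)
The plan is to reduce the statement to the already-understood classical ($\Acal_Q$-valued) case by a careful localization argument that avoids the set $\Delta_Q u$ where the multiplicity collapses. First I would recall the structure of a continuous $\Ascr_Q(\R^n)$-valued map: away from $\Delta_Q u = \{u = Q\llbracket 0\rrbracket\}$, the orientation datum in the factor $\{-1,1\}$ is locally constant (by continuity of $u$ and the definition of the quotient $\sim$ in $\Ascr_Q$), so on each connected component of $\Omega \setminus \Delta_Q u$ the map $u$ lifts to an honest $\Acal_Q(\R^n)$-valued continuous (indeed, Dir-minimizing, in the intended application, though only homogeneity is used here) map. On such a component the classical results of \cite{DLMSV} apply verbatim: two-point homogeneity with homogeneities $\alpha_1,\alpha_2$ forces $\alpha_1=\alpha_2$ and translation-invariance along $x_2-x_1$, and the unique continuation for $\Acal_Q$-valued Dir-minimizers (or, at the level of this lemma, for the harmonic sheets) propagates these identities.

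Next I would handle the interaction with $\Delta_Q u$. The key point to verify is that $\Delta_Q u$ cannot disconnect $\Omega$ in a way that obstructs the argument: since $u$ is homogeneous about $x_1$, the set $\Delta_Q u$ is a cone with vertex $x_1$ (and, about $x_2$, a cone with vertex $x_2$); in particular it is a closed set which, being a non-trivial homogeneous cone in the cases of interest, has empty interior, so $\Omega\setminus\Delta_Q u$ is dense and the identities obtained there extend by continuity of $u$ to all of $\Omega$. Concretely: the equality $\alpha_1=\alpha_2$ is read off from the frequency/scaling behaviour at any single regular point, which exists by density; the invariance $u(y + t(x_2-x_1)) = u(y)$ holds on a dense open set and hence everywhere by continuity. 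Finally, once translation-invariance along $v := x_2-x_1$ is established, the inclusion $x_1 + \spn\{v\}\subset \Delta_Q u$ follows because $u$ is $\alpha_1$-homogeneous about $x_1$ and simultaneously $v$-invariant: evaluating at points $x_1 + tv$ and using both properties (invariance lets us slide to $x_1$, homogeneity about $x_1$ forces the value at $x_1$ to be $Q\llbracket 0\rrbracket$ — or one rescales around $x_1$ to see the value is scaling-invariant, hence $Q\llbracket 0\rrbracket$), so the whole line lies in $\Delta_Q u$.

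I expect the main obstacle to be precisely the bookkeeping around $\Delta_Q u$: one must be sure that passing to $\Acal_Q$-valued lifts on components of $\Omega\setminus\Delta_Q u$ is legitimate and that the homogeneity hypotheses transfer to those lifts, and then that the conclusions glue back consistently across $\Delta_Q u$ without sign ambiguities. The cone structure of $\Delta_Q u$ about each of $x_1,x_2$ is what makes this work — it guarantees $\Delta_Q u$ is ``thin'' (lower-dimensional, or at worst with empty interior) so that the regular set is connected enough to run unique continuation, and it is also what forces the final line $x_1+\spn\{v\}$ into $\Delta_Q u$. A secondary technical point is to confirm that the unique continuation statement used in \cite{DLMSV} for $\Acal_Q$-Dir-minimizers is purely local and therefore unaffected by the global presence of collapse points; this is where I would simply cite \cite{DLMSV} and note that the argument is unchanged on each component of $\Omega\setminus\Delta_Q u$.
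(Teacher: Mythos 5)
Your proposal follows essentially the same route as the paper: both reduce to the classical $\Acal_Q$-valued case by exploiting that, away from $\Delta_Q u$, the sign datum is locally constant so that $u$ splits into $\Acal_Q$-valued sheets $u^\pm$, to which \cite{DLMSV}*{Lemma~6.8} applies. The one genuine difference is how the pieces are glued back: you argue via density of $\Omega\setminus\Delta_Q u$ and continuity (which requires some care about whether a given component sees both $x_1$ and $x_2$), whereas the paper sidesteps this entirely by extending each of $u^\pm$ to all of $\R^m$ by homogeneity before invoking \cite{DLMSV}*{Lemma~6.8}; the latter is slightly cleaner since it removes all connectivity bookkeeping, but both work.
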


\begin{lemma}\label{lem:modpuniquecont}
	Let $\delta \in (0,1)$, let $\Omega \subset \R^m$ be a connected open set and suppose that $u_1, u_2: \Omega \to \Ascr_Q(\R^n)$ are two homogeneous maps such that
	\begin{enumerate}[(a)]
		\item both $u_1$ and $u_2$ locally minimize the Dirichlet energy; \\
		\item there exists a non-empty open set $U \subset \Omega$ such that $u_1 \equiv u_2$ on $U$; \\
		\item\label{itm:nofreq1} for $j = 1,2$ we have $\Delta_Q u_j \equiv \Delta_Q^{2-\delta} u_j$.
	\end{enumerate}
	
	Then $u_1 \equiv u_2$ on $\Omega$.
\end{lemma}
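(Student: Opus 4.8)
The plan is to prove Lemma \ref{lem:modpuniquecont} by reducing it to the corresponding statement for $\Acal_Q$-valued maps, where it is already known (the analogue is carried out in \cite{DLMSV} for $\Acal_Q$-valued Dir-minimizers), while carefully handling the extra structure of $\Ascr_Q$, namely the $\Z_2$ sign and the possible presence of codimension-one zeros. The key point is that both hypotheses $\Delta_Q u_j \equiv \Delta_Q^{2-\delta} u_j$ are tailored precisely to rule out the pathology that obstructs naive unique continuation: homogeneous Dir-minimizers with frequency exactly $1$ passing through the relevant zero set.

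First I would set up the ``contact set'' $Z := \{x \in \Omega : u_1(x) = u_2(x)\}$ and show it is relatively closed, which is immediate from continuity. The heart of the argument is to show $Z$ is also open; then connectedness of $\Omega$ together with hypothesis (b) ($Z \supset U \neq \emptyset$) finishes it. To show openness, fix $x_0 \in Z$ and work in a small ball $B_\rho(x_0)$. Here I would use that $u_1, u_2$ are homogeneous Dir-minimizers about their respective centers; by Lemma \ref{lem:modphomog} (applied after noting homogeneity of each $u_j$ is about a single point, say the origin after translation), and by the monotonicity of the Almgren frequency $I_{u_j}(x_0, \cdot)$, the restrictions near $x_0$ are controlled by their frequency values $I_{u_j}(x_0,0)$. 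The crucial dichotomy: if $x_0 \notin \Delta_Q u_1$, then $u_1(x_0) \neq Q\llbracket 0\rrbracket$, so near $x_0$ the map $u_1$ (and hence $u_2$, since they agree to high order by real-analyticity/unique continuation for genuinely multi-valued but ``non-collapsed'' Dir-minimizers — the classical selection argument splitting into single-valued harmonic sheets) continues to coincide; one uses the standard unique continuation for $\Acal_Q$-valued Dir-minimizers on the region where the multiplicity structure is locally constant. If instead $x_0 \in \Delta_Q u_1 = \Delta_Q^{2-\delta} u_1$, then $I_{u_1}(x_0,0) \geq 2-\delta$, and likewise for $u_2$; here I would invoke the compactness/blow-up analysis: the blow-ups $\bar u_j$ of $u_j$ at $x_0$ are homogeneous Dir-minimizers with frequency $\geq 2-\delta > 1$, and since $u_1 \equiv u_2$ on an open subset of any neighborhood (by propagating from a nearby point of $Z$ where the structure is good), their blow-ups agree, forcing $u_1 \equiv u_2$ in a full neighborhood by the frequency-pinching/unique continuation for the linear problem. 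The sign component of $\Ascr_Q$ is handled by observing that on the set where $u_j \neq Q\llbracket 0\rrbracket$ the map $\mathbf{p}_\sharp$ to $\Acal_Q$ together with the orientation is locally determined, so agreement of the $\Acal_Q$-parts plus continuity of the orientation forces agreement in $\Ascr_Q$; on $\Delta_Q u_j$ itself the values are all $Q\llbracket 0\rrbracket$ with the canonical identification, so no sign ambiguity arises.

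Concretely, the steps in order: (1) translate so both maps are homogeneous about $0$ and record that homogeneity plus Dir-minimality gives real-analyticity of each $u_j$ away from $\Delta_Q u_j$, with $\Delta_Q u_j$ of Hausdorff dimension $\leq m-2$ (standard for $\Ascr_Q$-Dir-minimizers in this codimension-one setting, cf. \cite{DLHMSlin}); (2) define $Z$, note closedness; (3) show $Z \setminus \Delta_Q u_1$ is open in $\Omega \setminus (\Delta_Q u_1 \cup \Delta_Q u_2)$ by the classical unique continuation for real-analytic (hence for the harmonic sheets of) $\Acal_Q$-valued maps, after checking the decomposition of $\Ascr_Q$-maps into $\Acal_Q$-part and orientation is locally coherent away from the diagonal; (4) show that $Z$ has nonempty interior forces $Z \cap \Delta_Q u_1$ to also be ``thick'' near every one of its points via the blow-up argument and hypothesis (c), giving openness of $Z$ across the zero set; (5) conclude $Z = \Omega$ by connectedness.

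The main obstacle I expect is step (4): propagating the identity $u_1 \equiv u_2$ across the singular set $\Delta_Q u_1$. On $\Omega \setminus \Delta_Q u_j$ everything reduces to classical (single- or finitely-many-valued harmonic) unique continuation, but at a point $x_0$ where both maps vanish to order $\geq 2-\delta$ one must argue that agreement on a nearby open set genuinely forces agreement in a neighborhood of $x_0$ — this is exactly where the hypothesis $\Delta_Q u_j \equiv \Delta_Q^{2-\delta} u_j$ does the work, by excluding frequency-$1$ branch points along which two distinct homogeneous minimizers could split. I would handle this by a frequency-pinching contradiction argument in the spirit of Proposition \ref{prop:distfromhomog} and the blow-up scheme: if $u_1 \not\equiv u_2$ near $x_0$, consider the first scale at which they separate, blow up, and obtain two distinct homogeneous $\Ascr_Q$-Dir-minimizers agreeing on an open cone — contradicting the already-established open case (step 3), since homogeneous minimizers of frequency $\geq 2-\delta$ restricted to $\{u \neq Q\llbracket 0\rrbracket\}$ are determined by any open piece. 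The remaining subtlety is verifying that the analogues of the $\Acal_Q$-valued lemmas of \cite{DLMSV} survive the passage to $\Ascr_Q$ with codimension-one zero sets — I would dispatch this exactly as the paper does for its other $\Ascr_Q$-adaptations, by decomposing the relevant domains into the regions $\Mcal_\pm$ (here, the sign-coherence pieces) and noting the $\Hcal^m$-null zero set contributes nothing.
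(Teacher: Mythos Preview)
Your proposal is substantially more elaborate than what is needed, and it misses the key simplification that makes the paper's proof essentially a two-line reduction. The paper observes that condition (c), together with homogeneity and the fact that one-dimensional $\Ascr_Q$-valued Dir-minimizers are locally linear (hence have frequency exactly $1$ at their zeros), forces $\Delta_Q u_j$ to lie in an affine subspace of dimension at most $m-2$. Once $\Delta_Q u_j$ has codimension at least $2$, the complement $\Omega\setminus\Delta_Q u_j$ is connected, the $\{\pm 1\}$ sign of the $\Ascr_Q$-valued map is therefore globally constant, and each $u_j$ is canonically identified with a classical $\Acal_Q$-valued homogeneous Dir-minimizer. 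At that point \cite{DLMSV}*{Lemma~6.9} applies verbatim and the proof is over.

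There are two specific issues with your outline. First, in your step (1) you assert that $\dim_{\Hcal}\Delta_Q u_j\leq m-2$ is ``standard for $\Ascr_Q$-Dir-minimizers''. It is not: the entire raison d'\^etre of $\Ascr_Q$ is that $\Delta_Q u$ may have codimension one (the interface between the $\pm$ regions). It is precisely hypothesis (c) that rules this out, and you should say so explicitly. Your later reference to ``codimension-one zero sets'' suggests you have not fully internalised this. Second, having (correctly, if for the wrong reason) recorded the dimension bound in step (1), you then fail to exploit it: the connectedness of $\Omega\setminus\Delta_Q u_j$ immediately collapses the $\Ascr_Q$ structure to $\Acal_Q$, rendering your steps (3)--(5), and in particular the vague blow-up/frequency-pinching argument of step (4), entirely superfluous. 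The contact-set and propagation machinery you propose could perhaps be made to work, but there is no reason to attempt it once the reduction to $\Acal_Q$ is in hand.
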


\begin{proof}[Proof of Lemma \ref{lem:modphomog}]
	The proof of this follows by a very similar reasoning to the proof of \cite{DLMSV}*{Lemma 6.8}, which is the analogous result for classical $Q$-valued Dir-minimizers. However, one has to check that the argument is unchanged by the presence of the regions $\Omega_{\pm}$, separated by the points $x\in\Delta_Q u$ with $I_u(x) = 1$.
	
	The homogeneity of $u$ tells us that
	\[
	u^\pm(x) = \sum_i \left\llbracket |x-x_\ell|^{\alpha_\ell}  u_i\left(\frac{x-x_\ell}{|x-x_\ell|} + x_\ell\right) \right\rrbracket, \qquad \text{$x \in \Omega_{\pm}\sqcup \Omega_0$ respectively}, \quad \ell = 1,2.
	\]
	Recall that that each of $u^+$ and $u^-$ is a classical $\Acal_Q(\R^n)$-valued Dir-minimizer. We may thus extend each of these to $\R^m$ by homogeneity, and apply \cite{DLMSV}*{Lemma 6.8} to each one individually. The conclusion follows immediately.
\end{proof}

\begin{proof}[Proof of Lemma \ref{lem:modpuniquecont}]
	Observe that condition~\eqref{itm:nofreq1} tells us that for $j=1,2$, it holds that $\Omega \setminus \Delta_Q u_j$ is a affine subspace of dimension at most $m-2$, in light of the homogeneity assumption, combined with the knowledge that all one-dimensional $\Ascr_Q(\R^n)$-valued Dir-minimizers are locally superpositions of linear functions. Thus, $u_1$ and $u_2$ can be identified with classical (homogeneous) $Q$-valued Dir-minimizers that take values in $\Acal_Q$. This means that~\cite{DLMSV}*{Lemma~6.9} can be applied directly.
\end{proof}

The following lemma gives a quantitative notion of the existence of an approximate spine in $\Sbf$, provided that $N$ is (quantitatively) almost-homogeneous about an $(m-2)$-dimensional submanifold of the center manifold. It is entirely analogous to \cite{DLSk2}*{Lemma 6.2}, only posed in the mod$(p)$ setting.
\begin{lemma}\label{lem:noQpts}
	Suppose that $T$, $\Mcal$, $N$ are as in Assumption~\ref{asm:modp2}, let $x\in\Sbf$ and let $\rho,\tilde\rho,\bar\rho \in ]0,1]$ be given. There exists $\eps = \eps_{\ref{lem:noQpts}}(m,Q,\Lambda, \rho, \tilde\rho,\bar\rho) \in ]0,\eps_4^2]$ such that the following holds. Suppose that for some $r>0$,
	\[
	\Ebf^{no}(T, \Bbf_{2r}(x)) + (2r\Abf)^2 \leq \eps.
	\]
	Suppose that $X = \{x_i\}_{i=0}^{m-2} \subset \Bbf_r(x) \cap \Sbf$ is a $\rho r$-linearly independent set of points with
	\[
	W_{\tilde\rho r}^{2r}(x_i) < \eps \qquad \text{for each $i$}.
	\]
	Then $\Sbf \cap (\Bbf_r \setminus \Bbf_{\bar\rho r}(V (X))) = \emptyset$.
\end{lemma}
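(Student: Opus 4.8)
The plan is to argue by contradiction via a blow-up (linearization) scheme, following \cite{DLSk2}*{Lemma~6.2} and carrying out the codimension-one adaptations already used above (decomposition of the domain into the relatively open pieces $\Mcal_\pm$, and Lemmas~\ref{lem:modphomog}--\ref{lem:modpuniquecont}). Fixing $m,Q,\Lambda,\rho,\tilde\rho,\bar\rho$ and supposing no admissible $\eps$ exists, I would extract currents $T_k$, center manifolds $\Mcal_k$, normal approximations $N_k$ as in Assumption~\ref{asm:modp2} with frequency bound $\Lambda$ as in \eqref{e:bound-Lambda}, points $x^k\in\Sbf_k:=\Ffrak_Q(T_k)\cap\Bbf_1$, radii $r_k>0$ with $\Ebf^{no}(T_k,\Bbf_{2r_k}(x^k))+(2r_k\Abf)^2\le\tfrac1k$, $\rho r_k$-linearly independent sets $X^k=\{x^k_i\}_{i=0}^{m-2}\subset\Bbf_{r_k}(x^k)\cap\Sbf_k$ with $W^{2r_k}_{\tilde\rho r_k}(x^k_i)<\tfrac1k$ for each $i$, and points $y^k\in\Sbf_k\cap\Bbf_{r_k}(x^k)$ with $\dist(y^k,V(X^k))\ge\bar\rho r_k$. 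Rescaling by $\iota_{x^k,r_k}$, the rescaled currents have non-oriented excess tending to $0$ in $\Bbf_2$, so by area-minimality mod$(2Q)$ they converge to $Q\llbracket\pi_\infty\rrbracket$ for an $m$-plane $\pi_\infty$, the rescaled center manifolds converge to $\pi_\infty$ in $C^2_{\loc}$, and — after re-running the reduction to a single center manifold of Section~\ref{s:prelim} at scale $r_k$ around $x^k$ and normalizing the rescaled normal approximations suitably (comparably at all the $x^k_i$, using the frequency pinching) — the uniform estimates of Lemma~\ref{lem:simplify} and the almost-minimality of $N_k$ give strong $W^{1,2}_{\loc}$-convergence to a \emph{nontrivial} Dir-minimizer $u:B_{3/2}\subset\pi_\infty\to\Ascr_Q(\R)$ with $\eta\circ u=0$. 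Passing to limits, $\bar x_i:=\lim_k\iota_{x^k,r_k}(x^k_i)\in\pi_\infty$ are $\rho$-linearly independent and span an $(m-2)$-plane $\bar V$, while $\bar y:=\lim_k\iota_{x^k,r_k}(y^k)\in\pi_\infty$ satisfies $\dist(\bar y,\bar V)\ge\bar\rho>0$.

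Next I would read off the structure of $u$. Since each $x^k_i\in\Sbf_k$, Lemmas~\ref{lem:firstvar}--\ref{lem:simplify} and Proposition~\ref{prop:distfromhomog} are available at $x^k_i$ at \emph{every} scale, and $W^{2r_k}_{\tilde\rho r_k}(x^k_i)\to0$ forces, via Proposition~\ref{prop:distfromhomog} and the equality case of the frequency monotonicity, that $u$ be homogeneous about each $\bar x_i$. Applying Lemma~\ref{lem:modphomog} to the pairs $(\bar x_0,\bar x_i)$ then gives that $u$ is invariant along each $\bar x_i-\bar x_0$, hence along the $(m-2)$-dimensional subspace $V_0:=\spn\{\bar x_i-\bar x_0:i=1,\dots,m-2\}$, while remaining homogeneous about $\bar x_0$. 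On the other hand $y^k\in\Sbf_k\subset\Delta_Q^{2-\delta}N_k$ by Proposition~\ref{prop:highfreq}, so $N_k(y^k)=Q\llbracket0\rrbracket$ and $\Ibf_{N_k}(y^k,0)\ge2-\delta$; by Corollary~\ref{cor:freqmono} (whose error is an absolute power of $\boldsymbol m_0\le\eps_4^2$) and the uniform convergence of the normalized approximations, $u(\bar y)=Q\llbracket0\rrbracket$ and $I_u(\bar y,0)\ge(2-\delta)e^{-C\eps_4^{2\gamma_4}}>1$, provided $\eps_4=\eps_4(Q,m)$ has been fixed small enough (which, together with the smallness required for the compactness, is the only constraint this lemma imposes on $\eps_4$).

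To close the contradiction I would pass to a tangent function $v$ of $u$ at $\bar y$: it is a nontrivial Dir-minimizer, homogeneous of degree $I_u(\bar y,0)>1$ about $0$, with $v(0)=Q\llbracket0\rrbracket$. It is still invariant along $V_0$; and since $u$ is homogeneous about $\bar x_0\neq\bar y$, the standard radial-invariance of tangent functions of homogeneous Dir-minimizers at non-vertex points (which survives the $\Mcal_\pm$-reduction) makes $v$ invariant along $\bar y-\bar x_0$ as well. Since $\dist(\bar y,\bar V)\ge\bar\rho>0$ gives $\bar y-\bar x_0\notin V_0$, the map $v$ is invariant along the $(m-1)$-dimensional subspace $V_0\oplus\R(\bar y-\bar x_0)$, hence depends on a single variable; by the classification of one-dimensional $\Ascr_Q(\R)$-valued Dir-minimizers used in the proof of Lemma~\ref{lem:modpuniquecont}, $v$ is a superposition of linear functions, and being nontrivial, homogeneous about $0$, and equal to $Q\llbracket0\rrbracket$ at $0$, it has degree exactly $1$ — contradicting $I_u(\bar y,0)>1$. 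The step I expect to require the most care is the compactness of the first paragraph (strong $W^{1,2}_{\loc}$-convergence to a genuine $\Ascr_Q(\R)$-valued Dir-minimizer), where the codimension-one zero set $\Mcal_\pm$ must be handled in the frequency identities exactly as in the proof of Lemma~\ref{lem:spatialvarDH}; the remaining structure is formally identical to \cite{DLSk2}*{Lemma~6.2}.
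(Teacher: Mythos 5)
Your argument is correct in outline and shares the paper's compactness--linearization skeleton (blow up at scale $r_k$, normalize, pass to a nontrivial $\Ascr_Q(\R)$-valued Dir-minimizer $u$ with $\boldsymbol\eta\circ u\equiv 0$, use the vanishing pinching at the $\bar x_i$ together with Lemmas~\ref{lem:modphomog} and~\ref{lem:modpuniquecont} to make $u$ homogeneous about the $(m-2)$-plane through the limit points), but your final contradiction is genuinely different from the paper's. The paper extends $u$ to be $\alpha$-homogeneous about $V(X_\infty)$, observes that $u\equiv Q\llbracket 0\rrbracket$ with $I_u(\cdot,0)\equiv\alpha\geq 2-\delta$ on the $(m-1)$-plane $L=x_0+\spn\{x_i-x_0,y-x_0\}$, and contradicts the dimension estimate $\dim_\Hcal(\Delta_Q^{2-\delta}u)\leq m-2$ supplied by~\cite{DLHMSS_structure}*{Theorems~3.6--3.7}. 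You instead pass to a second blow-up $v$ of $u$ at $\bar y$, argue that $v$ is invariant along the $(m-1)$-dimensional subspace $V_0\oplus\R(\bar y-\bar x_0)$, and conclude via the one-dimensional classification of $\Ascr_Q$-valued Dir-minimizers that $v$ is degree-one, contradicting $I_u(\bar y,0)>1$. Both routes work; yours trades the explicit appeal to the dimension estimate in the last step for a second tangent construction plus the ``radial invariance of tangents at non-vertex points'' fact and the 1D classification (which is the same ingredient the paper buries inside Lemma~\ref{lem:modpuniquecont}).

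Two places where the write-up is more compressed than the argument requires. First, the pinching at the $\bar x_i$ gives homogeneity only in the annuli $B_2(\bar x_i)\setminus B_{\tilde\rho}(\bar x_i)$; upgrading this to global homogeneity about each $\bar x_i$ (needed before you can take a homogeneity-inheriting tangent at $\bar y$) requires first establishing $\Delta_Q u=\Delta_Q^{2-\delta}u$ via Lemma~\ref{lem:modphomog} in the overlapping annuli, and then invoking the unique continuation of Lemma~\ref{lem:modpuniquecont} --- the paper does this explicitly and you should too, since Lemma~\ref{lem:modpuniquecont} cannot be applied before its hypothesis~(c) has been checked. Second, to define a nontrivial tangent of $u$ at $\bar y$ with degree $I_u(\bar y,0)$ you need $H_u(\bar y,\tau)>0$ for small $\tau$, which the paper obtains by ruling out $u\equiv Q\llbracket 0\rrbracket$ near $\bar y$ via unique continuation; this step should appear in your argument as well.
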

\begin{proof}
	We prove this by contradiction. We may without any loss of generality assume that $x=0$. Now, suppose that the statement of the lemma is false. Then we may find sequences $\eps_k \todown 0$, $r_k \todown 0$ and corresponding sequences of center manifolds $\Mcal_k$ and normalized normal approximations $\bar N_k$ with $\Hbf_{\bar N_k}(0,1) = 1$ for $T_{0,r_k}$. Letting $\Sbf_k \coloneqq \Sbf(T_{0,r_k})$, we in addition have a sequence of $(m-1)$-tuples of points $X_k \coloneqq \{x_{k,0}, x_{k,1},\dots, x_{k,m-2}\} \subset \Bbf_{1} \cap \Sbf_k$ such that
	\begin{enumerate}[(i)]
		\item $X_k$ is $\rho$-linearly independent for some $\rho \in ]0,1]$;
		\item\label{itm:pinch} $W_{\tilde\rho}^{2}(\bar N_k, x_{k,i}) \leq \eps_k \to 0$ as $k \to \infty$ for some $\tilde{\rho} \in ]0,1]$;
		\item there exists a point $y_k \in \Sbf_k \cap (\Bbf_1 \setminus \Bbf_{\bar\rho}(V(X_k)))$.
	\end{enumerate}
	
	We can thus proceed to use a compactness argument as in Proposition \ref{prop:highfreq} (see, also \cite{DLHMS}*{Section 28} or \cite{DLSk2}*{Section 2.2} in the integral currents framework) in order to deduce that
	\begin{enumerate}[(1)]
		\item $\Mcal_k \longrightarrow \pi_\infty$ in $C^{3,\kappa}$;
		\item there exists a Dir-minimizer $u:\pi_\infty \supset B_1 \to \Ascr_Q(\R)$ with $\boldsymbol{\eta} \circ u \equiv 0$ such that $\bar N_k \circ \mathbf{e}_k \longrightarrow u$ in $L^2$ and in $W^{1,2}_\loc$;
		\item The sequence $X_k$ converges pointwise to  $X_\infty = \{x_0,\dots,x_{m-2}\}$;
		\item The points $y_{k}$ converge pointwise to $y \in \bar B_1\setminus B_{\bar\rho}(V(X_\infty)) \subset \pi_\infty$ with $u(y) = Q\llbracket 0 \rrbracket$.
	\end{enumerate}
	By \cite{DLHMSS_structure}*{Theorem 3.6, Theorem 3.7} and a standard stratification argument, we know that $\dim_{\Hcal}(\Delta_Q^{2-\delta} u) \leq m-2$, since $H_u(0,1) = 1$ and $\eta \circ u = 0$, so $u$ cannot be a classical harmonic map with multiplicity $Q$. Moreover, $H_u(y, \tau) > 0$ for every $\tau \in (0,1)$, since otherwise we would contradict the dimension estimate on $\Delta_Q^{2-\delta} u$. This, in combination with~\eqref{itm:pinch} tells us that
	\[
	I_u(x_i, \tilde{\rho}) = I_u(x_i,2) \geq 2-\delta \qquad \text{for $i = 0,\dots,m-2$}.
	\]
	The monotonicity of the regularized frequency as defined in Section \ref{s:prelim} for $\Ascr_Q$-valued Dir-minimizers then tells us that $u$ is $\alpha_i$-homogeneous about $x_i$ within the annulus $B_2(x_i)\setminus B_{\tilde\rho}(x_i) \subset \pi_\infty$, for some $\alpha_i \geq 2$. Firstly, we may immediately deduce that $\alpha_i = \alpha$ for some fixed $\alpha \geq 2-\delta$ by iteratively applying Lemma \ref{lem:modphomog}, and also that $\Delta_Q u = \Delta_Q^{2-\delta} u$. We may then extend $u$ to an $\alpha$-homogeneous function $v$ about the $(m-2)$-dimensional affine subspace $V(X_\infty)$, in light of Lemma \ref{lem:modpuniquecont}.
	
	Since $y \notin V(X_\infty)$ and $u(y) = Q\llbracket 0 \rrbracket$, but $u$ is $\alpha$-homogeneous about $V(X_\infty)$, this implies that $u \equiv Q\llbracket 0 \rrbracket$ on $L \coloneqq x_0 + \spn\{x_{m-2} - x_0, \dots, x_1 - x_0, y-x_0\}$, and $I_u(\cdot, 0) \equiv \alpha\geq 2$ on the $(m-1)$-dimensional plane $L$. This however contradicts the dimension estimate on $\Delta_Q^{2-\delta} u$, thus allowing us to conclude.
\end{proof}

The following lemma, which is the mod$(p)$ analogue of \cite{DLSk2}*{Lemma 6.3}, tells us that it is enough to establish approximate homogeneity on a linearly independent set of points, in order to achieve approximate homogeneity in the affine subspace spanned by these points.

\begin{lemma}\label{lem:smallspatialvar}
	Suppose that $T$, $\Mcal$ and $N$ are as in Assumption~\ref{asm:modp2}, let $x\in\Sbf$ and let $\rho, \tilde\rho, \bar\rho \in ]0,1]$ be given. Then for any $\delta > 0$, there exists $\eps = \eps_{\ref{lem:smallspatialvar}} > 0$, dependent on $m,Q,\Lambda, \rho, \tilde\rho,\bar\rho,\delta$ for which the following property holds. Suppose that for some $r>0$ we have
	\[
	\Ebf^{no}(T, \Bbf_{2r}(x)) + (2r\Abf)^{2} \leq \eps.
	\]
	In addition, suppose that $X = \{x_i\}_{i=0}^{m-2} \subset \Bbf_r(x) \cap \Sbf$ is a $\rho r$-linearly independent set of points with
	\[
	W_{\tilde\rho r}^{2r} (x_i) < \eps \qquad \text{for every $i=0,\dots,m-2$}.
	\]
	Then for every $y_1, y_2 \in \Bbf_r(x) \cap \Bbf_{\varepsilon r} (V (X)) \cap \mathbf{S}$ and for every $r_1, r_2 \in [\bar\rho r, r]$ we have
	\[
	|\Ibf(y_1,r_1) - \Ibf(y_2,r_2)| \leq \delta\, .
	\]
\end{lemma}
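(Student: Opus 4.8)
The plan is to prove Lemma~\ref{lem:smallspatialvar} by combining the spine-splitting statement of Lemma~\ref{lem:noQpts} with the spatial frequency variation estimate of Lemma~\ref{lem:spatialvarI} and the near-constancy of the frequency coming from the pinching hypothesis. First I would fix $\delta>0$ and argue by contradiction: suppose there exist sequences $\eps_k\todown 0$, radii $r_k>0$, currents $T^{(k)}$ (rescaled so that $r_k=1$, $x=0$, and $\Hbf_{\bar N_k}(0,1)=1$ as in the proof of Lemma~\ref{lem:noQpts}), $\rho r_k$-linearly independent configurations $X_k=\{x_{k,i}\}_{i=0}^{m-2}\subset\Bbf_1\cap\Sbf_k$ with $W_{\tilde\rho}^{2}(x_{k,i})<\eps_k$, and points $y^{(k)}_1,y^{(k)}_2\in \Bbf_1\cap\Bbf_{\eps_k}(V(X_k))\cap\Sbf_k$ and radii $r^{(k)}_1,r^{(k)}_2\in[\bar\rho,1]$ with $|\Ibf(y^{(k)}_1,r^{(k)}_1)-\Ibf(y^{(k)}_2,r^{(k)}_2)|>\delta$. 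By the compactness argument already invoked in Lemma~\ref{lem:noQpts} (and Proposition~\ref{prop:highfreq}), after passing to a subsequence $\Mcal_k\to\pi_\infty$ in $C^{3,\kappa}$, $\bar N_k\circ\mathbf e_k\to u$ in $W^{1,2}_{\loc}$ for some $\Ascr_Q(\R)$-valued Dir-minimizer $u$ with $\boldsymbol\eta\circ u\equiv 0$, $X_k\to X_\infty$, $y^{(k)}_j\to y_j$ and $r^{(k)}_j\to r_j\in[\bar\rho,1]$.

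Next I would reproduce the homogeneity conclusion from the proof of Lemma~\ref{lem:noQpts}: the pinching passes to the limit giving $I_u(x_i,\tilde\rho)=I_u(x_i,2)\geq 2-\delta$, hence (by the monotonicity of the regularized frequency for $\Ascr_Q$-valued Dir-minimizers and $H_u(x_i,\tau)>0$) $u$ is $\alpha_i$-homogeneous about $x_i$ in the annulus $B_2(x_i)\setminus B_{\tilde\rho}(x_i)$; iterating Lemma~\ref{lem:modphomog} forces $\alpha_i=\alpha$ for a common $\alpha\geq 2-\delta$ and $\Delta_Q u=\Delta_Q^{2-\delta}u$, and by Lemma~\ref{lem:modpuniquecont} one extends $u$ to a genuinely $\alpha$-homogeneous map $v$ about the affine plane $V(X_\infty)$. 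Since $y_1,y_2\in V(X_\infty)$ (they are limits of points within $\eps_k$ of $V(X_k)$), the homogeneity of $v$ about $V(X_\infty)$ forces $I_v(y_j,\cdot)\equiv\alpha$ on all of $]0,\infty[$; in particular $I_u(y_1,r_1)=I_u(y_2,r_2)=\alpha$. Thus the \emph{limiting} frequencies at the two points and scales coincide, which is the crux.

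It then remains to upgrade this to a contradiction with $|\Ibf(y^{(k)}_1,r^{(k)}_1)-\Ibf(y^{(k)}_2,r^{(k)}_2)|>\delta$ at the current level. For this I would use the continuity/lower-semicontinuity of the regularized frequency under this compactness (as in the proof of Proposition~\ref{prop:highfreq}): $\Ibf_{\bar N_k}(y^{(k)}_j,r^{(k)}_j)\to I_u(y_j,r_j)=\alpha$, using that $H$ stays bounded below along the sequence — which follows because $y_j\in\Sbf$ forces (via Proposition~\ref{prop:highfreq} and the dimension bound on $\Delta_Q^{2-\delta}u$) $H_u(y_j,\tau)>0$ for all $\tau\in(0,1)$, exactly as argued in Lemma~\ref{lem:noQpts}. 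Hence for $k$ large $|\Ibf(y^{(k)}_1,r^{(k)}_1)-\Ibf(y^{(k)}_2,r^{(k)}_2)|<\delta$, contradicting the assumption. One subtlety worth addressing explicitly: the radii $r^{(k)}_j$ may degenerate to the lower endpoint $\bar\rho>0$, but since $\bar\rho$ is a fixed positive constant the frequency is evaluated only on compact subsets of $]0,1]$, so the convergence $\Ibf_{\bar N_k}(y^{(k)}_j,r^{(k)}_j)\to I_u(y_j,r_j)$ is uniform on $[\bar\rho,1]$ by the monotonicity estimates of Corollary~\ref{cor:freqmono} and Lemma~\ref{lem:simplify}.

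The main obstacle I anticipate is the second step — ensuring that $y_1,y_2$ genuinely land on $V(X_\infty)$ and that $v$ is honestly homogeneous about the full $(m-2)$-plane rather than merely about each $x_i$ separately. This is where Lemma~\ref{lem:noQpts} is really being used: its contrapositive guarantees that $\Sbf_k\cap(\Bbf_1\setminus\Bbf_{\bar\rho'}(V(X_k)))=\emptyset$ for suitable $\bar\rho'$, so that the limit points $y_j$, lying within $\eps_k\to 0$ of $V(X_k)$, must lie exactly on $V(X_\infty)$; and the unique-continuation Lemma~\ref{lem:modpuniquecont} (whose hypothesis $\Delta_Q u\equiv\Delta_Q^{2-\delta}u$ we have verified) is what promotes the pointwise homogeneities into homogeneity about the whole spanned plane. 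Everything else is a matter of carefully tracking the convergence of $\Dbf$, $\Hbf$, $\Ibf$ through the compactness, which is routine given Lemma~\ref{lem:simplify} and Lemma~\ref{lem:spatialvarI}.
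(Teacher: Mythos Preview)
Your proposal is correct and follows essentially the same contradiction-and-compactness argument as the paper's proof: rescale, pass to a limiting $\Ascr_Q$-valued Dir-minimizer $u$, use the vanishing pinching to get homogeneity about each $x_i$, promote this to homogeneity about $V(X_\infty)$ via Lemmas~\ref{lem:modphomog} and~\ref{lem:modpuniquecont}, observe $y_1,y_2\in V(X_\infty)$ so $I_u(y_1,r_1)=I_u(y_2,r_2)$, and contradict the frequency gap by convergence $\Ibf_k(y_{k,i},r_{k,i})\to I_u(y_i,r_i)$. The references to Lemma~\ref{lem:noQpts} and Lemma~\ref{lem:spatialvarI} in your outline are superfluous (the paper uses neither here), but otherwise your argument matches the paper's almost line by line.
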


\begin{proof} 
	We will once again proceed to argue by contradiction. We again assume that $x=0$ without loss of generality. If the statement of the lemma is false, we may find sequences $\eps_k \todown 0$, $r_k \todown 0$ and corresponding sequences of center manifolds $\Mcal_k$ and normalized normal approximations $\bar N_k$ with $\Hbf_{\bar N_k}(0,1) = 1$ for $T_{0,r_k}$, with a sequence of $(m-1)$-tuples of points $X_k \coloneqq \{x_{k,0}, x_{k,1},\dots, x_{k,m-2}\} \subset \Bbf_1\cap\Sbf_k = \Bbf_1\cap \Sbf(T_{0,r_k})$ such that
	\begin{enumerate}[(i)]
		\item The set $X_k$ is $\rho$-linearly independent for some $\rho > 0$;
		\item $W_{\tilde\rho}^{2}(\bar N_k, x_{k,i}) \leq \eps_k \to 0$ as $k \to \infty$ for some $\tilde{\rho} > 0$;
		\item\label{itm:freqgap} there exist points $y_{k,1}, \ y_{k,2} \in\Bbf_1\cap \Bbf_{\varepsilon_k} (V (X_k)) \cap \Sbf_k$ and corresponding scales $r_{k,i} \in [\bar\rho, 1]$ with
		\[
		|\Ibf_k(y_{k,1},r_{k,1}) - \Ibf_k(y_{k,2},r_{k,2})| \geq \delta > 0,
		\]
		where $\Ibf_k \coloneqq \Ibf_{\bar N_k}$.
	\end{enumerate}
	We may now use an analogous compactness argument to that in the proof of Lemma \ref{lem:noQpts} to conclude that, up to subsequence, we have
	\begin{enumerate}[(1)]
		\item $\Mcal_k \longrightarrow \pi_\infty$ in $C^{3,\kappa}$;
		\item $\bar N_k \circ \mathbf{e}_k \longrightarrow u$ in $L^2$ and in $W^{1,2}_\loc$, where $u$ is an $\Ascr_Q$-valued Dir-minimizer with $\boldsymbol{\eta} \circ u \equiv 0$;
		\item the collections of points $X_k$ converge pointwise to  $X_\infty = \{x_0,\dots,x_{m-2}\}$;
		\item the points $y_{k,i}$ converge pointwise to $y_i$ and the respective scales $r_{k,i}$ converge to $r_i \in [\bar\rho,1]$ for $i=1,2$.
	\end{enumerate}
	Proceeding as in the proof of Lemma~\ref{lem:noQpts}, we arrive at the conclusion $u \equiv Q\llbracket 0 \rrbracket$ on $x_0 + \spn\{x_{m-2}-x_0,\dots,x_1 - x_0\} = V(X_\infty)$ with the additional property that
	\[
	I_u(x_i, \tilde{\rho}) = I_u(x_i,2) \geq 2-\delta \qquad \text{for $i = 0,\dots,m-2$}.
	\]
	Thus, $I_u(y,\tau) \equiv \alpha \geq 2-\delta$ for any $y \in V(X_\infty)$ and any $\tau > 0$. On the other hand, since $r_{k,i}\in [\bar\rho,1]$ and $\bar\rho > \eta \min\{d(y_1,\Sbf), \eta d(y_2,\Sbf)\}$, we additionally have $\Ibf_k(y_{k,i}, r_{k,i}) \to I_u(y_i,r_i)$ for $i=1,2$, so the property~\eqref{itm:freqgap} is in contradiction with the homogeneity of $u$ about $V(X_\infty)$.
\end{proof}
\section{Jones' $\beta_2$ coefficient control}
This section is dedicated to controlling the ``mean flatness'' in a ball for a given Radon measure $\mu$ supported in $\mathbf{S}$, in terms of an $(m-2)$-dimensional $\mu$-weighted average of the frequency pinching, up to a lower order error term. We hence recall here the definition of Jones' $\beta_2$ coefficient (here we only consider the latter associated to $(m-2)$-dimensional planes), which is frequently used in many contexts when controlling the flatness (in an averaged $L^2$ sense) of a given set. It will enable us to measure the mean flatness of $\mu$ at a given scale around a given point.
\begin{definition}[\cite{DLSk2}*{Definition 7.1}]\label{def:beta2}
	Given a Radon measure $\mu$ in $\R^{m+n}$, we define the $(m-2)$-dimensional Jones' $\beta_2$ coefficient of $\mu$ as
	\[
	\beta_{2,\mu}^{m-2}(x,r) \coloneqq \inf_{\text{affine $(m-2)$-planes $L$}} \left[r^{-(m-2)} \int_{\Bbf_r(x)} \left(\frac{\dist(y,L)}{r}\right)^2 \dd\mu(y)\right]^{1/2}.
	\]
\end{definition}
The main result of this section is the following, which yields the desired control on the $\beta_2$ coefficient of a measure supported in $\Sbf$.

\begin{proposition}\label{prop:beta2control}
	There exist thresholds $\eta=\eta_{\ref{prop:beta2control}} (m) > 0$, $\eps=\eps_{\ref{prop:beta2control}} (\Lambda, m,Q, \eta)$, $\alpha_0 = \alpha_0(\Lambda,m,Q) > 0$ and $C(\Lambda,m,Q) > 0$ such that the following holds. Suppose that $T$, $\Mcal$ and $N$ satisfy Assumption~\ref{asm:modp2} with parameters $\varepsilon_4\leq\eps_{\ref{prop:beta2control}} $ and $\eta\leq \eta_{\ref{prop:beta2control}}$. Suppose that $\mu$ is a finite non-negative Radon measure with $\spt (\mu) \subset \Sbf$. Then for all $r \in ]0 ,1]$ and every $x_0 \in \Bbf_{r/8}\cap \mathbf{S}$ we have
	\[
	[\beta_{2,\mu}^{m-2}(x_0, r/8)]^2 \leq \frac{C}{r^{m-2}} \int_{\Bbf_{r/8}(x_0)} W^{4r}_{r/8}(x)\dd\mu(x) + C \boldsymbol{m}_0^{\alpha_0} r^{-(m-2-\alpha_0)}\mu(\Bbf_{r/8}(x_0)).
	\]
\end{proposition}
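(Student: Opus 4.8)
The plan is to bound the infimum defining $\beta_{2,\mu}^{m-2}(x_0,r/8)$ by making a careful choice of competitor $(m-2)$-plane $L$, built from the spectral decomposition of the second-moment matrix of $\mu$ restricted to $\Bbf_{r/8}(x_0)$. Concretely, set $b \coloneqq \frac{1}{\mu(\Bbf_{r/8}(x_0))}\int_{\Bbf_{r/8}(x_0)} y \dd\mu(y)$ to be the $\mu$-barycenter, and consider the symmetric positive semidefinite matrix
\[
M \coloneqq \frac{1}{\mu(\Bbf_{r/8}(x_0))} \int_{\Bbf_{r/8}(x_0)} (y-b)\otimes (y-b) \dd\mu(y).
\]
Let $L$ be the affine $(m-2)$-plane through $b$ spanned by the eigenvectors of $M$ corresponding to its $m-2$ largest eigenvalues; then a standard linear-algebra computation gives
\[
r^{-(m-2)}\int_{\Bbf_{r/8}(x_0)} \left(\frac{\dist(y,L)}{r}\right)^2 \dd\mu(y) = r^{-m}\,\mu(\Bbf_{r/8}(x_0))\,(\lambda_{m-1}+\lambda_{m+n}),
\]
where $\lambda_{m-1}\geq\lambda_{m+n}$ (in fact $n=1$ here, so there is only one ``small'' eigenvalue $\lambda_{m-1}$ beyond the top $m-2$, plus the trivial ones) are the bottom eigenvalues of $M$. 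So the task reduces to bounding the smallest eigenvalue(s) of $M$.

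The heart of the matter is a dichotomy argument. Fix a scale $r$ and a point $x_0\in\Sbf$. Either $\Sbf\cap\Bbf_{r/8}(x_0)$ is already $\varepsilon r$-close to some $(m-2)$-plane — in which case the barycentric plane $L$ above is a fortiori close and we are done with room to spare, using the crude bound $\dist(y,L)^2 \lesssim (\varepsilon r)^2$ on the support — or else $\Sbf\cap\Bbf_{r/8}(x_0)$ is \emph{not} $\varepsilon r$-close to any $(m-2)$-plane. In the latter case one extracts, by a Gram–Schmidt/greedy procedure, a $\rho r$-linearly independent $(m-1)$-tuple of points $X=\{x_0,\ldots,x_{m-2}\}\subset\Sbf\cap\Bbf_{r/8}(x_0)$ with $\rho = \rho(\varepsilon,m)>0$ (this is exactly the packing argument: if no such $(m-1)$-tuple existed, $\Sbf$ would be trapped near an $(m-2)$-plane). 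Now apply Lemma~\ref{lem:smallspatialvar} (or Lemma~\ref{lem:noQpts}) with this $X$: its hypothesis is a smallness of the frequency pinching $W_{\tilde\rho r}^{2r}(x_i)$ for each $i$. This is precisely where the right-hand side term $\int W^{4r}_{r/8}\dd\mu$ enters — one does \emph{not} assume the pinching is small; rather, one compares $\lambda_{\min}(M)$ with a $\mu$-weighted average of the pinching. The mechanism, following \cite{DLSk2}, is: the bottom eigenvalue $\lambda_{\min}(M)$ is controlled by $\int_{\Bbf_{r/8}(x_0)}\dist(y, V(X))^2\dd\mu(y)/(r^2\mu(\Bbf_{r/8}(x_0)))$ for \emph{any} $(m-1)$-tuple $X$ one can manufacture; and one uses Proposition~\ref{prop:distfromhomog} together with the ``cone-splitting'' content of Lemma~\ref{lem:smallspatialvar} to show that any point $y\in\spt\mu$ far from $V(X)$ forces a definite amount of frequency pinching at $y$ (measured between scales comparable to $r$), hence $\dist(y,V(X))^2 \lesssim r^2 \big(W^{4r}_{r/8}(y) + \boldsymbol{m}_0^{\gamma_4} r^{\gamma_4}\big)$ pointwise in $y$. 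Integrating this in $\dd\mu(y)$ over $\Bbf_{r/8}(x_0)$ and dividing appropriately yields the claimed estimate, with $\alpha_0$ a small multiple of $\gamma_4$ and the lower-order term coming from the $\boldsymbol{m}_0^{\gamma_4}r^{\gamma_4}$ errors in Proposition~\ref{prop:distfromhomog} and Lemma~\ref{lem:simplify}. The parameters $\eta$ and $\varepsilon$ are fixed so that Lemmas~\ref{lem:noQpts} and~\ref{lem:smallspatialvar} apply at every relevant scale down to $\eta d(x,\Sbf)$, using Assumption~\ref{asm:modp2}(iv).

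The main obstacle — and the step requiring the most care — is establishing the pointwise comparison ``$\dist(y,V(X))^2 \lesssim r^2(W^{4r}_{r/8}(y) + \text{l.o.t.})$'' with constants uniform over the family of extracted tuples $X$ and robust under passing to the center-manifold geometry (geodesic vs. Euclidean distance, the regions $\Mcal_\pm$ separated by density-$1$ points). One must: (i) run a compactness/contradiction argument as in Lemmas~\ref{lem:noQpts}–\ref{lem:smallspatialvar}, producing in the limit an $\Ascr_Q$-valued $\alpha$-homogeneous Dir-minimizer $u$ that is $Q\llbracket0\rrbracket$ on an $(m-1)$-plane, contradicting $\dim_{\Hcal}\Delta_Q^{2-\delta}u\leq m-2$ (via \cite{DLHMSS_structure}*{Theorems 3.6, 3.7}); and (ii) make this contradiction quantitative, i.e., convert it into the stated effective inequality — this is the standard but delicate ``quantitative differentiation'' upgrade, where one has to track how the frequency-pinching smallness degrades the spanning constant $\rho$ and vice versa. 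The presence of codimension-one zeros (the sets $\Mcal_\pm$, $\Delta_Q u$) is handled exactly as in the proofs of Lemmas~\ref{lem:modphomog} and~\ref{lem:modpuniquecont}: one restricts to $\Mcal_+$ and $\Mcal_-$ separately, where $N$ (resp. $u$) is a classical $\Acal_Q$-valued object, invokes the known integral-current estimates of \cite{DLSk2}, and recombines, noting the density-$1$ locus has $\Hcal^m$-measure zero so contributes nothing to the integrals. Beyond this, the argument is a faithful transcription of \cite{DLSk2}*{Section 7}, and I would state it as such, indicating the modifications rather than reproducing every estimate.
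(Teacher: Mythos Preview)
Your opening paragraph (second-moment matrix, barycentric plane, reduction to bounding the bottom eigenvalues) is correct and matches the paper. From there, however, your strategy diverges from the paper's and contains a genuine gap.

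The paper does \emph{not} argue via a dichotomy or by extracting a $\rho r$-linearly independent tuple $X\subset\Sbf$. Lemmas~\ref{lem:noQpts} and~\ref{lem:smallspatialvar} play no role in the proof of Proposition~\ref{prop:beta2control}. Your proposed pointwise bound $\dist(y,V(X))^2 \lesssim r^2\big(W^{4r}_{r/8}(y)+\boldsymbol{m}_0^{\gamma_4}r^{\gamma_4}\big)$ is not a consequence of those lemmas: their hypotheses require small pinching at the \emph{tuple points} $x_i$, not at the test point $y$, and your extraction procedure gives no control on $W(x_i)$. Even granting that hypothesis, the conclusions are qualitative (a threshold $\varepsilon$ obtained by compactness, with no effective dependence on the pinching), so the ``quantitative differentiation upgrade'' you invoke would have to be built from scratch---it is not a standard step.

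The actual mechanism in the paper (following \cite{DLSk2}*{Proposition~7.2}) is purely spectral. Let $v_1,\dots,v_{m+n}$ be the eigenvectors of $M$. For each $x\in\spt\mu\subset\Sbf$, Proposition~\ref{prop:distfromhomog} controls the deviation of $N$ from radial homogeneity about $x$ on the annulus $\Acal_{r/4}^{2r}(x_0)$ by $\Hbf(x_0,2r)\big(W^{4r}_{r/8}(x)+\boldsymbol{m}_0^{\gamma_4}r^{\gamma_4}\big)$. Integrating in $\dd\mu(x)$ and manipulating (exactly as in \cite{DLSk2}) yields
\[
[\beta_{2,\mu}^{m-2}(x_0,r/8)]^2 \int_{\Acal_{r/4}^{2r}(x_0)}\sum_{j=1}^{m-1} |DN(z)\cdot \boldsymbol{\ell}_z (v_j)|^2 \,\dd z \;\leq\; Cr^{-(m-1)}\Hbf(x_0,2r)\cdot(\text{RHS})\,.
\]
The new content in the mod$(p)$ setting---and the step you have missed entirely---is the \emph{lower bound}
\[
\int_{\Acal_{r/4}^{2r}(x_0)}\sum_{j=1}^{m-1} |DN(z)\cdot \boldsymbol{\ell}_z (v_j)|^2 \,\dd z \;\geq\; c(\Lambda)\,\frac{\Hbf(x_0,2r)}{r}\,,
\]
which is proved by compactness: if it fails along a sequence, the limiting $\Ascr_Q$-valued Dir-minimizer $u$ is a function of a single variable on the annulus $B_2\setminus \bar B_{1/4}$. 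The paper then invokes Lemma~\ref{lem:modpinvariant} (not the lemmas you cite) to propagate this one-variable structure to all of $B_2$; since the limit also satisfies $u(y)=Q\llbracket 0\rrbracket$ at some interior point (coming from $\mu(\Bbf_{r/8})>0$) and $\boldsymbol\eta\circ u\equiv 0$ with $\int|u|^2>0$, one obtains $\dim_{\Hcal}(\Delta_Q u)\geq m-1$, contradicting non-triviality. This is a different limiting contradiction from the one you sketch (your limit is $\alpha$-homogeneous about an $(m-1)$-plane; the paper's limit is $(m-1)$-\emph{invariant}), and it is precisely why Lemma~\ref{lem:modpinvariant} is proved immediately before the proposition.
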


The proof of Proposition \ref{prop:beta2control} requires the following preliminary lemma regarding a characterization of $\Ascr_Q$-valued Dir-minimizers that are $(m-1)$-invariant.

\begin{lemma}\label{lem:modpinvariant}
	Let $A_{r,R}(\bar{z})\coloneqq B_R(\bar{z})\setminus \bar{B}_r(\bar{z})\subset \R^m$ and suppose that $u: B_R (\bar{z})\to \Ascr_Q(\R^n)$ is a non-trivial Dir-minimizer. Assume there is a ball $B \subset \Omega$ and a system of coordinates $x_1,\dots,x_m$ such that $u\big|_{A_{r,R}(\bar{z})}$ is a function of $x_1$ only. Then $u$ is a function of only $x_1$ on all of $B_R(\bar{z})$.
	
	Moreover, one of the following two alternatives holds:
	\begin{enumerate}[(i)]
		\item\label{itm:noQpts} $\Delta_Q u = \emptyset$;
		\item\label{itm:1homog} there is a one-homogeneous Dir-minimizer $v: \R \to \Ascr_Q(\R^n)$ such that $u(x) = v(x_1)$ for $x \in \Omega$.
	\end{enumerate}
\end{lemma}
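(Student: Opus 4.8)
The plan is to reduce everything to the classical $\Acal_Q$-valued theory by decomposing the domain of $u$ along its set of density-$Q$ points of frequency $1$, exactly as in the proofs of Lemmas \ref{lem:modphomog} and \ref{lem:modpuniquecont}. First I would establish the unique continuation statement: suppose $u\big|_{A_{r,R}(\bar z)}$ depends only on $x_1$. Away from $\Delta_Q u$, the map $u$ locally decomposes into $\Acal_Q$-valued $\Dir$-minimizers on each connected component of $B_R(\bar z)\setminus \Delta_Q u$; since the set $\{x\in \Delta_Q u : I_u(x,0)=1\}$ is where the two ``sheets'' $\Omega_\pm$ can meet, and (by the one-dimensional classification of $\Ascr_Q$-valued $\Dir$-minimizers as superpositions of linear pieces) such points either form a codimension-$\geq 2$ set or, in the $(m-1)$-invariant situation here, a hyperplane $\{x_1 = \text{const}\}$. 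On each component $\Omega_\pm$, the map $u^\pm$ is a classical $\Acal_Q(\R^n)$-valued $\Dir$-minimizer which is a function of $x_1$ only on the (nonempty, relatively open) intersection of $\Omega_\pm$ with the annulus; hence by the classical unique continuation result \cite{DLMSV}*{Lemma 6.9} (or the real-analyticity of such minimizers on the set where they are single-valued, together with the selection structure), each $u^\pm$ is a function of $x_1$ on all of $\Omega_\pm$. Gluing across the interfaces $\{x_1 = \text{const}\}$ then shows $u$ is a function of $x_1$ alone on $B_R(\bar z)$.

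Next I would prove the dichotomy. Since $u$ now depends only on $x_1$, it induces a one-dimensional $\Ascr_Q(\R^n)$-valued $\Dir$-minimizer $v:J\to\Ascr_Q(\R^n)$ on an interval $J\subset\R$ (by slicing and using that the slices inherit minimality). One-dimensional $\Ascr_Q$-valued $\Dir$-minimizers are, on each maximal subinterval avoiding $\Delta_Q$, superpositions of affine functions; the mod-$p$ cancellation constraint (the ``orientation'' bookkeeping encoded in the $\Ascr_Q$ quotient) forces that across any point of $\Delta_Q v$ the affine pieces must match up in a way that either (a) never vanishes, in which case $\Delta_Q u=\emptyset$ and we are in case \eqref{itm:noQpts}; or (b) there is a genuine zero, and then the only homogeneity compatible with a one-dimensional $\Dir$-minimizing $\Ascr_Q$-valued map vanishing at a point is degree $1$ (higher-degree one-dimensional minimizers with a zero do not exist, as $\Dir$-minimizing in one variable forces piecewise-linearity), so after translating the zero to the origin, $v$ is a $1$-homogeneous $\Dir$-minimizer $\R\to\Ascr_Q(\R^n)$ and $u(x)=v(x_1)$, which is case \eqref{itm:1homog}.

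The main obstacle I anticipate is the interface analysis in the first paragraph: one must verify that the set $\{x_1=\text{const}\}$-type hypersurfaces along which $\Omega_+$ and $\Omega_-$ meet do not spoil the propagation of $x_1$-dependence, i.e. that the gluing is consistent and that no ``new'' dependence on $x_2,\dots,x_m$ can be introduced at the interface. This is handled by the relative openness of $\Bbf_R(\bar z)\cap\Mcal_\pm$-type components (here the flat analogue, using \cite{DLHMSlin}*{Corollary 2.8}) together with the fact that $\Hcal^{m}(\Delta_Q u \cap \{I_u(\cdot,0)=1\})$ has the expected codimension; once one knows each $u^\pm$ is genuinely a function of $x_1$ and that they agree on the overlap region in the annulus (which has nonempty interior in each component), continuity of $u$ forces agreement everywhere, closing the argument. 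The remaining steps — the one-dimensional classification and the homogeneity dichotomy — are routine given the structure theory for $\Ascr_Q$-valued $\Dir$-minimizers recalled earlier and in \cite{DLHMSlin}.
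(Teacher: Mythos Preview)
Your proposal has a genuine gap in the unique continuation step. You invoke \cite{DLMSV}*{Lemma~6.9} to conclude that each $u^\pm$ is a function of $x_1$ on all of $\Omega_\pm$, but that lemma concerns two \emph{homogeneous} $\Acal_Q$-valued Dir-minimizers agreeing on an open set; it does not supply a unique continuation principle for a single (non-homogeneous) $\Acal_Q$-valued Dir-minimizer that happens to depend only on $x_1$ on an open subset. No general unique continuation theorem of that type is available for $\Acal_Q$-valued Dir-minimizers, so this step cannot be closed by citation. Your parenthetical alternative (``real-analyticity on the set where they are single-valued, together with the selection structure'') points in the right direction, but you do not carry it out, and the propagation across sheet collisions is precisely where all the work lies. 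A second, related issue is that your decomposition into $\Omega_\pm$ tacitly assumes every connected component meets the annulus $A_{r,R}(\bar z)$; a priori (before the lemma is proved) nothing rules out a component sitting entirely inside $B_r(\bar z)$, and for such a component you have no open set on which $x_1$-dependence is known.

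The paper's proof avoids both problems by working sheet-by-sheet rather than component-by-component. Starting from a regular point $x$ in the annulus, it writes $u$ locally as $\bigl(\sum_i k_i\llbracket u_i\rrbracket,\epsilon\bigr)$ with each $u_i(y)=(y_1-c_i)v_i$ affine (this is where the one-variable hypothesis enters). It then defines $M(x)$ and $L(x)$ as the first $y_1$-values to the right and left at which two sheets collide, and extends the representation across the entire slab $\{L(x)<y_1<M(x)\}\cap B_R(\bar z)$ using \emph{single-valued} harmonic unique continuation on each sheet $u_i$ separately---no multivalued black box is needed. At a finite endpoint a collision occurs, and a Hausdorff-dimension count ($\dim_\Hcal(\Omega_\pm\cap\Delta_Q u)\leq m-2$) forces the full hyperplane $\{y_1=M(x)\}$ (or $\{y_1=L(x)\}$) into $\{u=Q\llbracket 0\rrbracket\}$. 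Because the sheets are affine and $u$ is nontrivial, at most one of $L(x),M(x)$ can be finite; repeating the argument from the other side of that hyperplane (still inside the annulus) completes the $x_1$-dependence on all of $B_R(\bar z)$ and yields the dichotomy directly. The slab-by-slab propagation never needs the global topology of $\Omega_\pm$ inside $B_r(\bar z)$, which is what rescues the argument from the two gaps above.
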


\begin{remark}
	Note that in case~\eqref{itm:1homog} in Lemma~\ref{lem:modpinvariant}, $\Delta_Q u = \{x_1 = c\}$ for some $c \in \R$ and $I_u(x, \tau) = 1$ for every $x \in \Delta_Q u$ and every $\tau > 0$. 
\end{remark}

\begin{proof}[Proof of Lemma~\ref{lem:modpinvariant}]
	Fix a point $x \in \Reg u \cap A_{r,R}(\bar{z})$, with coordinates $x_1,\dots,x_m$. Then, by definition (see~\cite[Definition~10.1]{DLHMSlin}) there must be a neighbourhood $U \ni x$ and a sign $\epsilon \in \{+1,-1\}$ such that
	\[
	u(y) = \left(\sum_{i=1}^Q \llbracket u_i(y) \rrbracket, \epsilon\right) \qquad \text{for $y \in U$}.
	\]
	More precisely, this is because $\Delta_Q u$ is a relatively closed set of Hausdorff dimension at most $m-1$.
	
	This in particular implies that we may identify $u\big|_U$ with a classical $\Acal_Q$-valued Dir-minimizer. Moreover, the invariance of $u$ in $A_{r,R}(\bar{z})$ implies that $u_i(y) = (y_1 - c_i)v_i$ for some $c_i \in \R$, $v_i \in \R^n$, since one-dimensional classical $\Acal_Q$-valued Dir-minimizers necessarily have affine decompositions. In addition, for any $i \neq j$, either $u_i(x) \neq u_j(x)$ or $u_i \equiv u_j$ on $U$.
	
	We may thus rewrite $u$ in terms of distinct representatives as
	\[
	u(y) = \left(\sum_{i=1}^{Q'} k_i \llbracket u_i(y) \rrbracket, \epsilon\right) \qquad \text{for $y \in U$},
	\]
	where $k_i \in \N$, $\N \ni Q' \leq Q$ and $u_i(x) \neq u_j(x)$ for every $i \neq j$.
	
	Now define
	\[
	M(x) \coloneqq \sup\set{y_1 > x_1}{\text{$u_i(y) = u_j(y)$ for some $i \neq j$}},
	\]
	with the convention that $M(x) = +\infty$ if this set is empty.
	
	We will proceed to show that
	\begin{equation}\label{eq:uniquecont}
		u(y) = \left(\sum_{i=1}^{Q'} k_i \llbracket u_i(y) \rrbracket, \epsilon\right) \qquad \text{for $y \in \{x_1 < y_1 < M(x)\}\cap B_R(\bar{z})$}.
	\end{equation}
	Let $W$ be the maximal open set in $\{x_1 < y_1 < M(x)\}\cap B_R(\bar{z})$ in which~\eqref{eq:uniquecont} holds. Note that $W \supset U$. If $W \neq \{x_1 < y_1 < M(x)\}\cap B_R(\bar{z})$, we can choose a point $\xi \in \{x_1 < y_1 < M(x)\}\cap B_R(\bar{z})\cap\partial W$. Since $\xi_1 < M(x)$, we must have $u_i(\xi) \neq u_j(\xi)$ for every $i \neq j$. This means that we may apply the unique continuation for each single-valued Dir-minimizer $u_i$, to conclude that there is a neighbourhood $V \ni \xi$ on which~\eqref{eq:uniquecont} holds. This, however, contradicts the maximality of $W$, so indeed~\eqref{eq:uniquecont} holds on the entirety of $\{x_1 < y_1 < M(x)\}\cap B_R(\bar{z})$.
	
	Now, notice that if $M(x) < \sup\set{y_1}{y \in B_R(\bar{z})}$, then
	\[
	\{y_1 = M(x)\}\cap B_R(\bar{z})\subset \{u = Q\llbracket 0\rrbracket\}.
	\]
	This is due to the fact that $\Hcal^{m-1}\left(\{y_1 = M(x)\} \cap B_R(\bar{z})\right) > 0$, while $\dim_{\Hcal}\left((B_R(\bar{z}))_\pm \cap \Delta_Q u\right) \leq m-2$.
	
	Similarly, let
	\[
	L(x) \coloneqq \inf\set{y_1 < x_1}{\text{$u_i(y) = u_j(y)$ for some $i \neq j$}},
	\]
	with the convention that $L(x) = -\infty$ if this set is empty.
	
	Proceeding in exactly the same way as above, we conclude that
	\begin{equation}\label{e:rep}
		u(y) = \left(\sum_{i=1}^{Q'} k_i \llbracket u_i(y) \rrbracket, \epsilon\right) \qquad \text{for $y \in \{L(x) \leq y_1 \leq M(x)\}\cap B_R(\bar{z})$},
	\end{equation}
	and
	\[
	\{y_1 = L(x)\}\cap B_R(\bar{z})\subset \{u = Q\llbracket 0\rrbracket\}.
	\]
	Moreover, notice that if $M(x) < +\infty$, then $L(x) = - \infty$, and if $L(x) > -\infty$ then $M(x)= + \infty$. This is due to the fact that $u$ is non-trivial, and each $u_i$ is affine on $\{L(x) < y_1 < M(x)\}$. 
	
	Now there are two possibilities; either one of $L(x),M(x)$ lies in $[\bar{z}_1-R, \bar{z}_1 + R]$, or not. In the latter case, $B_r(\bar{z})\subset \{L(x) \leq y_1 \leq M(x)\}$ and so it is immediate that the representation formula \eqref{e:rep} holds in $B_R(\bar{z})$. In the former case, suppose without loss of generality that $L(x) \in [\bar{z}_1-R, \bar{z}_1 + R]$. However, since on $A_{r,R}(\bar{z})\cap \{y_1 < L(x)\}$, $u$ remains a function of $x_1$ only, we may again exploit the affine structure of $\Acal_Q$-valued Dir-minimizers to conclude that the representation formula \eqref{e:rep} holds in the entirety of $B_R(\bar{z})$. The dichotomy~\eqref{itm:noQpts} or~\eqref{itm:1homog} follows immediately in both cases.    
\end{proof}  

\begin{remark}
	In fact, the proof of Lemma \ref{lem:modpinvariant} demonstrates that the conclusion of the lemma holds true in any open, connected domain $\Omega \subset \R^m$ in place of $B_R(x_0)$, if $u$ is a function of $x_1$ only on an open subset $\Omega'\subset \Omega$ that contains a point $x$ with $u(x)=Q\llbracket 0 \rrbracket$. The author suspects that this more general version result may remain true even without the requirement that $\Omega'$ contains a point $x$ with $u(x)=Q\llbracket 0 \rrbracket$. However, since such a more general version of the result is not required here, we do not pursue this here.
\end{remark}

\begin{proof}[Proof of Proposition~\ref{prop:beta2control}]
	We may assume that $\mu(\Bbf_{r/8}) > 0$, since the desired estimate is otherwise trivial. The majority of this proof follows exactly as that of \cite{DLSk2}*{Proposition 7.2}. Indeed, letting $\Acal_{r/4}^{2r}(x_0) \coloneqq (\Bbf_{2r}(x_0)\setminus\Bbf_{r/4}(x_0))\cap\Mcal$ and proceeding in exactly the same manner as the proof therein, we arrive at the estimate
	\begin{align*}
		[\beta_{2,\mu}^{m-2}(x_0,r/8)]^2&\int_{\Acal_{r/4}^{2r}(x_0)}\sum_{j=1}^{m-1} |DN(z)\cdot \boldsymbol{\ell}_z (v_j)|^2 \dd z \\
		&\leq C r^{-(m-1)} \Hbf(x_0,2r)\left(\int_{\Bbf_{r/8}(x_0)} W^{4r}_{r/8}(x) \dd\mu(x) + \boldsymbol{m}_0^{\alpha_0} r^{\alpha_0} \mu(\Bbf_{r/8}(x_0))\right),
	\end{align*}
	for $\alpha_0 > 0$ sufficiently small, where $\boldsymbol{\ell}_z: T_{x_0} \mathcal{M} \to T_z \mathcal{M}$ is the linear map that corresponds to the differential $d \mathbf{e}_{x_0} |_{\zeta}$ of the exponential map $\mathbf{e}_{x_0}$ at the point $\zeta = \mathbf{e}_{x_0}^{-1} (z)$.
	
	It thus remains to check that
	\begin{equation}\label{eq:lowerbd}
		\int_{\Acal_{r/4}^{2r}(x_0)}\sum_{j=1}^{m-1} | DN(z)\cdot \boldsymbol{\ell}_z (v_j)|^2 \dd z \geq c(\Lambda)\frac{\Hbf(x_0,2r)}{r} ,
	\end{equation}
	for some $C(\Lambda) > 0$. We prove this via a contradiction and compactness argument, as usual. By scaling and translation invariance of the claimed bound, we may assume that $r=1$ and $x_0 = 0$. If~\eqref{eq:lowerbd} fails, then we can extract a sequence of currents $T_k$ with $\boldsymbol{m}_0^{(k)} \leq \eps_k^2 \to 0$, corresponding center manifolds $\Mcal_k$ in $\Bbf_1$, normalized normal approximations $\bar N_k$ with $\int_{\Bbf_2\setminus\bar{\Bbf}_{1}\cap\Mcal_k} |\bar N_k|^2 = 1$ and $\int_{\Bbf_1\cap\Mcal_k}|D\bar N_k|^2 \leq C\Lambda$, such that
	\begin{itemize}
		\item $\Mcal_k \to \pi_\infty$,
		\item $ \boldsymbol{\eta}\circ \bar N_k \to 0$,
		\item $\bar N_k(y_k) = Q\llbracket 0 \rrbracket$ for some $y_k \in \Bbf_{1/8}\cap \Mcal_k$ (since $\mu_{T_k}(\Bbf_{r/8}) > 0$),
	\end{itemize}
	but with
	\[
	\int_{\Bbf_{2}\setminus \Bbf_{1/4}\cap\Mcal_k}\sum_{j=1}^{m-1} |D\bar{N}_k(z)\cdot \boldsymbol{\ell}_z^k(v_j^k)|^2 \longrightarrow 0,
	\]
	for some choice of orthonormal vectors $\{v_1^k,\dots,v_{m-1}^k\}$. Up to subsequence, we can extract a limiting Dir-minimizer $u: \pi_\infty \supset B_{2} \to \Ascr_Q(\R)$ with
	\begin{itemize}
		\item $ \int_{B_2\setminus \bar{B}_{1}} |u|^2 = 1$,
		\item $\int_{B_1}|Du|^2 \leq C\Lambda$,
		\item $\boldsymbol{\eta}\circ u \equiv 0$,
		\item $u(y) = Q\llbracket 0 \rrbracket$ \quad for some $y \in B_{1/8}$,
	\end{itemize}
	but for which
	\[
	\int_{B_{2}\setminus \bar{B}_{1}}\sum_{j=1}^{m-1} |Du(z)\cdot v_j|^2 = 0
	\] 
	for orthonormal directions $v_j$ which are the (pointwise) limit of the directions $v^k_j$. Thus, arguing as in the proof of~\cite{DLMSV}*{Proposition~5.3}, we conclude that $u$ is a function of only one variable on $B_{2}\setminus \bar{B}_{1/4}$, and so Lemma~\ref{lem:modpinvariant} tells us that it is a function of only one variable the whole of $B_2$. Since $u(y) = Q\llbracket 0 \rrbracket$, we have $\dim_\Hcal(\Delta_Q u) \geq m-1$, which contradicts the fact that $u$ is non-trivial.
\end{proof}
\section{Coverings, Minkowski bound and rectifiability}

Now that we have the desired bounds on the $\beta_2$ coefficients as in Proposition \ref{prop:beta2control}, we are in a position to conclude the result of Theorem \ref{thm:modpmain}. The conclusion is achieved via an iterative covering procedure, originally appearing in \cite{NV_Annals}. It has since then further been used in \cite{DLMSV} in the context of classical multiple-valued Dirichlet-minimizing functions, followed by \cite{DLSk2} for a fixed normal approximation for an area-minimizing integral current of high codimension. 

The proofs of the results in this section are completely identical to those in \cite{DLSk2}*{Section 8}, relying only on the preceding results, which have now been established in this context, in the previous sections of this article. Thus, the proofs are omitted here, and we instead refer the reader to \cite{DLSk2}.

We begin with the following covering lemma, which is the analogue of \cite{DLSk2}*{Lemma 8.1}

\begin{lemma}\label{lem:cover1}
	Let $\rho \leq \frac{1}{100}$, let $\sigma < \tau < \frac{1}{8}$ and let $\eta = \eta_{\ref{prop:beta2control}} > 0$. There exists $\varepsilon_4 = \varepsilon_4(\Lambda,m,Q,\alpha_0) > 0$ sufficiently small such that the following holds. Suppose that $T$ is as in Assumption \ref{asm:modp2} for these choices of $\eta$ and $\eps_4$. Let $x\in \mathbf{S}\cap\Bbf_{1/8}$, let $D \subset \Sbf \cap \Bbf_\tau (x)$ and let $U \coloneqq \sup_{y \in D} \Ibf(y,\tau)$.
	
	Then there exists $\delta = \delta_{\ref{lem:cover1}}(m,Q,\Lambda,\rho) > 0$, a dimensional constant $C_R=C_R(m) > 0$ and a finite cover of $D$ by balls $\Bbf_{r_i}(x_i)$ such that
	\begin{enumerate}[(a)]
		\item\label{itm:covera} $r_i \geq 10\rho\sigma$;
		\item\label{itm:coverb} $\sum_i r_i^{m-2} \leq C_R \tau^{m-2}$;
		\item\label{itm:coverc} For every $i$, either $r_i \leq \sigma$ or
		\[
		F_i \coloneqq D \cap \Bbf_{r_i}(x_i) \cap \set{y}{\Ibf(y,\rho r_i) \in (U -\delta, U+\delta)} \subset \Bbf_{\rho r_i}(V_i),
		\]
		for some $(m-3)$-dimensional subspace $V_i \subset \mathbb R^{m+n}$.
	\end{enumerate}
\end{lemma}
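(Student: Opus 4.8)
The plan is to run the stopping-time refinement scheme of~\cite{NV_Annals}, in the form developed for multiple-valued Dir-minimizers in~\cite{DLMSV} and for a single fixed normal approximation in~\cite{DLSk2}*{Section~8}; its combinatorial skeleton is insensitive to the mod$(p)$ setting, so I would run the same construction and only check that the mod$(p)$ ingredients of the previous sections feed in exactly as their integral-current counterparts do, after which the precise bookkeeping is word-for-word that of~\cite{DLSk2}*{Lemma~8.1}.

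\emph{Construction.} Starting from the root $\Bbf_\tau(x)$, I would process balls $\Bbf_r(y)$ with $y\in D$, $\sigma<r\le\tau$, one at a time, always keeping (after a harmless adjustment of the top scale, where the finitely many balls of radius comparable to $\tau$ are treated by hand) $4r\le\tau$. Given such a ball, set $F\coloneqq D\cap\Bbf_r(y)\cap\set{z}{\Ibf(z,\rho r)\in(U-\delta,U+\delta)}$; by Corollary~\ref{cor:freqmono} and $\Ibf(z,\tau)\le U$ the upper constraint is automatic once $\eps_4$ is small. If $F$ does \emph{not} $\rho r$-span an $(m-2)$-dimensional affine subspace, then elementarily $F\subset\Bbf_{\rho r}(V)$ for some $(m-3)$-dimensional affine $V$, and $\Bbf_r(y)$ becomes a final ball (it satisfies (a) since $r>\sigma>10\rho\sigma$, and the second alternative of (c)). If $F$ \emph{does} $\rho r$-span an $(m-2)$-dimensional affine subspace, I would refine. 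Pick a $\rho r$-linearly independent $X=\{z_0,\dots,z_{m-2}\}\subset F$: for each $z_i$, almost-monotonicity of $\Ibf$ (Corollary~\ref{cor:freqmono}) together with $\Ibf(z_i,\rho r)>U-\delta$, $\Ibf(z_i,\tau)\le U$ and $4r\le\tau$ gives $W^{4r}_{\rho r}(z_i)\le\delta+C\boldsymbol{m}_0^{\gamma_4}$; and since $F\ne\emptyset$ there is $q\in\Sbf\cap\Bbf_r(y)$, so Assumption~\ref{asm:modp2}(iii) (excess decay, Proposition~\ref{prop:excessdecaymodp}) makes the non-oriented excess on $\Bbf_{4r}(q)$ as small as wanted. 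Fixing the concentration scale $\bar\rho=\bar\rho(m,Q,\Lambda)$ and hence $\eps_{\ref{lem:noQpts}}$, and requiring $\delta<\tfrac{1}{2}\eps_{\ref{lem:noQpts}}$ and $\eps_4$ small, Lemma~\ref{lem:noQpts} (applied at $q$, with linear-independence and pinching scales both taken equal to $\rho$) yields $D\cap\Bbf_r(y)\subset\Bbf_{C\bar\rho r}(V(X))$. I would then cover $D\cap\Bbf_r(y)$ by balls of radius $r/2$ centered on $D\cap\Bbf_r(y)$, with bounded overlap, and recurse, declaring a ball final once its radius reaches $\le\sigma$ (such balls satisfy (a) thanks to $\rho\le\tfrac{1}{100}$, and the first alternative of (c)). Lemma~\ref{lem:smallspatialvar} enters at this step: the effective spanning of $F$ propagates, through that lemma, to approximate homogeneity of $N$ on the whole slab $\Bbf_{C\bar\rho r}(V(X))$, hence to small frequency pinching at every point of $D\cap\Bbf_r(y)$, which is what transmits the pinching control to the children.

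\emph{The content bound is the main obstacle.} The naive estimate multiplies the total content by a dimensional constant $>1$ at each refinement and so blows up over the $\sim\log(\tau/\sigma)$ levels of the tree; conclusion (b) therefore cannot be reached by a union bound. As in~\cite{NV_Annals,DLMSV,DLSk2} I would instead attach to the final balls the packing measure $\mu\coloneqq\sum_i r_i^{m-2}\delta_{x_i}$ and bound $\sum_i r_i^{m-2}=\mu(\Bbf_\tau(x))$ by the discrete Reifenberg theorem of~\cite{NV_Annals}: along every subball of the refinement tree the scale-summed Jones $\beta_2$ coefficients of $\mu$ are controlled, through Proposition~\ref{prop:beta2control}, by a $\mu$-integral of the frequency pinching $W$, which is small both because refinement only occurred on balls where $F$ effectively spans (so the set is genuinely flat there, and below the stopped balls $\mu$ is locally a single atom) and because Lemma~\ref{lem:simplify} and Corollary~\ref{cor:freqmono} bound the total frequency variation by $\Lambda$ up to $\boldsymbol{m}_0^{\gamma_4}$-errors. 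Once $\eps_4\le\eps_{\ref{prop:beta2control}}$ and $\eta\le\eta_{\ref{prop:beta2control}}$ the discrete Reifenberg hypotheses hold and $\sum_i r_i^{m-2}\le C_R(m)\tau^{m-2}$. Checking that Proposition~\ref{prop:beta2control} and the monotonicity estimates plug into the Reifenberg machinery precisely as in the integral-current case is the delicate part, and it is exactly where the mod$(p)$ features must be seen to be harmless: $\Mcal_0=\{N=Q\llbracket 0\rrbracket\}$ has $\Hcal^m$-measure zero and is invisible to the combinatorics; $\Sbf\subset\Delta_Q^{2-\delta}N$ by Proposition~\ref{prop:highfreq}, so the dimension bound $\dim_{\Hcal}(\Delta_Q^{2-\delta}u)\le m-2$ used inside Lemmas~\ref{lem:noQpts} and~\ref{lem:smallspatialvar} remains available; and the variational inputs (Lemmas~\ref{lem:firstvar},~\ref{lem:spatialvarI}, Proposition~\ref{prop:distfromhomog}) have already been established mod$(p)$. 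With these verifications the proof is identical to~\cite{DLSk2}*{Lemma~8.1}.
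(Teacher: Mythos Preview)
Your proposal is correct and takes essentially the same approach as the paper: the paper itself omits the proof entirely, stating that it is ``completely identical'' to that of \cite{DLSk2}*{Lemma~8.1} once the mod$(p)$ ingredients of the preceding sections (Lemmas~\ref{lem:noQpts}, \ref{lem:smallspatialvar}, Proposition~\ref{prop:beta2control}, and the frequency estimates) are in place. Your outline of the Naber--Valtorta stopping-time refinement, the spanning/non-spanning dichotomy for $F$, and the discrete Reifenberg bound on the packing measure $\mu=\sum_i r_i^{m-2}\delta_{x_i}$ is precisely that argument, and your identification of which mod$(p)$ inputs replace their integral-current counterparts matches what the paper asserts.
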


\begin{remark}[Heirarchy of parameters]
	The parameters $\varepsilon_4$ and $\eta$ of Assumption \ref{asm:modp2} are initially taken to be small enough so that we can apply Proposition \ref{prop:beta2control}. Then, $\eps_4$ is further decreased if necessary, to ensure that $\boldsymbol{m}_0^{\alpha_0}$ falls below a desired small dimensional constant, in order to absorb a suitable error term (see the proof of \cite{DLSk2}*{Proposition 7.2}). Lemma \ref{lem:cover1} will then be used to prove the following additional efficient covering result, entirely analogous to \cite{DLSk2}*{Proposition 8.2}, where the parameter $\rho$ will be chosen smaller than a geometric constant depending only on $m$.
\end{remark}

\begin{proposition}\label{prop:cover2}
	Let $\eta =\eta_{\ref{prop:beta2control}} > 0$ and let $\eps_4 > 0$ be as in Lemma \ref{lem:cover1}. There exist $\delta = \delta_{\ref{prop:cover2}}(m,Q,\Lambda)$, a scale $\tau = \tau(m,Q,\Lambda,\delta) < \frac{1}{8}$ and a dimensional constant $C_V = C_V(m) \geq 1$ such that the following holds.
	
	Assume that $T$ is as in Assumption \ref{asm:modp2} for these choices of $\eta$ and $\eps_4$. Suppose that $x \in \Sbf\cap\Bbf_{1/8}$ and let $D \subset \Sbf\cap\Bbf_\tau(x)$ and $U \coloneqq \sup_{y \in D} \Ibf(y,\tau)$. Then, for every $s\in ]0,\tau[$, there exists a finite cover of $D$ by balls $\Bbf_{r_i}(x_i)$ with $r_i \geq s$ and a decomposition of $D$ into sets $A_i \subset D$ such that
	\begin{enumerate}[(a)]
		\item $A_i \subset D \cap \Bbf_{r_i}(x_i)$;
		\item\label{itm:packing} $\sum\limits_i r_i^{m-2} \leq C_V \tau^{m-2}$;
		\item For every $i$ we have either $r_i = s$ or
		\[
		\sup_{y \in A_i}\Ibf(y,r_i) \leq U - \delta.
		\]
	\end{enumerate}
\end{proposition}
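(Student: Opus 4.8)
The plan is to deduce Proposition~\ref{prop:cover2} from Lemma~\ref{lem:cover1} by the Naber--Valtorta-type iteration carried out in \cite{DLSk2}*{Section~8}. Fix once and for all a small dimensional constant $\rho=\rho(m)\le\tfrac1{100}$ (pinned down at the end) and set the fine scale to be $\sigma:=s$. Organise a family of balls into \emph{generations}: generation $0$ is the single \emph{active} ball $\Bbf_\tau(x)$. To process an active ball $\Bbf_r(p)$ (with $p\in\Sbf$ and $r>s$) we apply the rescaled Lemma~\ref{lem:cover1} to $D\cap\Bbf_r(p)$, with threshold parameter $\rho$, fine scale $\sigma=s$, and with the \emph{local} supremum $U':=\sup_{D\cap\Bbf_r(p)}\Ibf(\cdot,r)$ playing the role of $U$; by the almost-monotonicity of Corollary~\ref{cor:freqmono} and $D\cap\Bbf_r(p)\subset D$ one has $U'\le U+C\boldsymbol m_0^{\gamma_4}$, so after decreasing $\varepsilon_4$ every $\delta$-drop below $U'$ is a $(\delta/2)$-drop below the global $U$. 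This produces children $\Bbf_{r_i}(x_i)$ with $\sum_i r_i^{m-2}\le C_R r^{m-2}$ and $r_i\ge 10\rho s$.

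Each child is handled as follows. If $r_i\le s$ the child is \emph{stopped at the fine scale}: since $\Bbf_{r_i}(x_i)\subset\Bbf_s(x_i)$ we replace it by the single ball $\Bbf_s(x_i)$, a ``radius $=s$'' ball of the final cover (its $(m-2)$-content is $\le(10\rho)^{-(m-2)}r_i^{m-2}$). If $r_i>s$, split $D\cap\Bbf_{r_i}(x_i)=F_i\sqcup F_i^c$ as in Lemma~\ref{lem:cover1}. The set $F_i$ lies in the $\rho r_i$-neighbourhood of an $(m-3)$-plane $V_i$, hence is covered by at most $C(m)\rho^{-(m-3)}$ balls of radius $2\rho r_i$, which we keep \emph{active} for the next generation; their total $(m-2)$-content is $\le C(m)\,2^{m-2}\rho\, r_i^{m-2}$. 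The complement $F_i^c$ is, by the definition in Lemma~\ref{lem:cover1} together with the upper bound $\Ibf(\cdot,\rho r_i)\le U'+C\boldsymbol m_0^{\gamma_4}<U'+\delta$ (again from Corollary~\ref{cor:freqmono}), exactly the set of $z$ with $\Ibf(z,\rho r_i)\le U'-\delta\le U-\delta/2$; we cover it by at most $C(m)\rho^{-m}$ balls of radius $\rho r_i$ \emph{centred in $F_i^c$}, each of which is \emph{stopped by frequency drop}, since the piece it carries is contained in $F_i^c$ and hence has frequency $\le U-\delta/2$ at the ball's own radius $\rho r_i$. The sets $A_i$ are produced by assigning, in each ball generated, the still-unclaimed points of $D$.

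It remains to tally the content and check termination. Writing $\mathcal A_n$ for the active balls of generation $n$, refining a ball of radius $r$ feeds into $\mathcal A_{n+1}$ a content $\le C(m)2^{m-2}\rho\cdot C_R r^{m-2}$, so $\sum_{\mathcal A_{n+1}}r^{m-2}\le C(m)2^{m-2}\rho\,C_R\sum_{\mathcal A_n}r^{m-2}$; choosing the dimensional constant $\rho$ small enough that $C(m)2^{m-2}\rho\,C_R\le\tfrac12$ gives $\sum_{\mathcal A_n}r^{m-2}\le 2^{-n}\tau^{m-2}$. The same refinement produces stopped balls of total content $\le\big(C(m)\rho^{-2}+(10\rho)^{-(m-2)}\big)C_R r^{m-2}$ (from the $F_i^c$-coverings and the fine-scale replacements), so the total content of all stopped balls is $\le\sum_{n\ge0}\big(C(m)\rho^{-2}+(10\rho)^{-(m-2)}\big)C_R\,2^{-n}\tau^{m-2}=:C_V(m)\,\tau^{m-2}$; since $\rho$ and $C_R$ are dimensional, so is $C_V$. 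Every active child has radius $\le 2\rho r_i\le 4\rho r<r$, so radii shrink geometrically and after finitely many generations all active balls have radius $\le s$ and are stopped, terminating the process; the resulting finite cover satisfies (a)--(c) with $\delta$ replaced by $\delta/2$, for any $s\in\,]0,\tau[$ and $\tau=\tau(m,Q,\Lambda,\delta)<\tfrac18$ inherited from the hypotheses of Lemma~\ref{lem:cover1} (and Proposition~\ref{prop:beta2control}). The only genuinely delicate point is the content bookkeeping: it is the drop from dimension $m-2$ to $m-3$ in the spine supplied by Lemma~\ref{lem:cover1} that makes the $F_i$-covering contract by the factor $\rho$, and one must choose $\rho$ dimensionally small enough to beat $C_R$ and close the geometric series while keeping all remaining constants dimensional; the rest is routine.
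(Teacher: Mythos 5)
Your proposal is correct and follows the same Naber--Valtorta iterative covering scheme that the paper itself defers to (namely \cite{DLSk2}*{Section~8}): start from a single active ball, apply Lemma~\ref{lem:cover1} to each active ball, keep the balls covering the near-spine set $F_i$ active (their content contracts by a factor $\sim\rho$ thanks to the drop from $(m-2)$ to $(m-3)$ in the spine dimension), stop the balls covering $F_i^c$ by the frequency drop, choose $\rho$ dimensionally small to close the geometric series, and replace sub-$s$ balls by radius-$s$ balls. Your estimate that the $F_i$-covering carries content $\leq C(m)2^{m-2}\rho\, r_i^{m-2}$ and that the $F_i^c$-covering carries $\leq C(m)\rho^{-2} r_i^{m-2}$, together with the geometric decay $\sum_{\mathcal A_{n}}r^{m-2}\leq 2^{-n}\tau^{m-2}$, is exactly the intended bookkeeping, and your use of Corollary~\ref{cor:freqmono} to pass between the local supremum $U'$ and the global $U$ (costing a factor $\tfrac12$ in $\delta$) is also standard and correct.

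A few bookkeeping points you left implicit but which deserve a sentence in a full write-up: (i) when $s< r_i<s/\rho$ the $F_i^c$-balls of radius $\rho r_i$ fall below the final scale $s$ and must themselves be replaced by radius-$s$ balls, for which condition (c) is vacuous and whose content is still controlled by $\rho^{-(m-2)}(\rho r_i)^{m-2}$; (ii) Lemma~\ref{lem:cover1} only bounds $\sum_i r_i^{m-2}\leq C_R r^{m-2}$, so the individual children satisfy $r_i\leq C_R^{1/(m-2)}r$ rather than $r_i\leq 2r$ --- the termination argument still goes through by taking $\rho$ small enough that $2\rho C_R^{1/(m-2)}<1$, but your ``$\leq 4\rho r$'' is not literally justified; (iii) the recursive applications of Lemma~\ref{lem:cover1} are centred at points $p$ that may lie slightly outside $\Bbf_{1/8}$ (they only lie in $\Bbf_{1/4}$ after one generation), so one should observe that the $\tfrac18$ in Lemma~\ref{lem:cover1} can be relaxed, or pass to a rescaled picture. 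None of these affect the structure of the argument.
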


\subsection{Conclusion of Theorem \ref{t:modpreduction}(v)}\label{s:concl}
The conclusion of Theorem \ref{t:modpreduction}(v) now follows from Proposition~\ref{prop:cover2} by exactly the same reasoning as that in \cite{DLSk2}*{Section 8.3}. We therefore do not include the details here, and refer the reader to the argument therein.

\begin{bibdiv}
	\begin{biblist}
		
		
		\bib{Almgren_regularity}{book}{
			AUTHOR = {Almgren Jr., Frederick J.},
			TITLE = {Almgren's big regularity paper},
			SERIES = {World Scientific Monograph Series in Mathematics},
			VOLUME = {1},
			NOTE = {$Q$-valued functions minimizing Dirichlet's integral and the
				regularity of area-minimizing rectifiable currents up to
				codimension 2,
				With a preface by Jean E. Taylor and Vladimir Scheffer},
			PUBLISHER = {World Scientific Publishing Co., Inc., River Edge, NJ},
			YEAR = {2000},
			PAGES = {xvi+955},
			ISBN = {981-02-4108-9},
		}
		
		\bib{AT15}{article}{
			AUTHOR = {Azzam, Jonas},
			author = {Tolsa, Xavier},
			TITLE = {Characterization of {$n$}-rectifiability in terms of {J}ones'
				square function: {P}art {II}},
			JOURNAL = {Geom. Funct. Anal.},
			VOLUME = {25},
			YEAR = {2015},
			NUMBER = {5},
			PAGES = {1371--1412},
			ISSN = {1016-443X},
			DOI = {10.1007/s00039-015-0334-7},
			URL = {https://doi.org/10.1007/s00039-015-0334-7},
		}
		
		
		
		\bib{DLDPHM}{article}{
			AUTHOR = {De Lellis, Camillo},
			author={De Philippis, Guido},
			author={Hirsch, Jonas},
			author={Massaccesi, Annalisa},
			TITLE = {Boundary regularity of mass-minimizing integral currents and a question of {A}lmgren},
			BOOKTITLE = {2017 {MATRIX} annals},
			SERIES = {MATRIX Book Ser.},
			VOLUME = {2},
			PAGES = {193--205},
			PUBLISHER = {Springer, Cham},
			YEAR = {2019},
		}
		
		\bib{DLHMSlin}{article}{
			title={Area‐Minimizing Currents mod 2
				Q
				: Linear Regularity Theory},
			volume={75},
			ISSN={1097-0312},
			url={http://dx.doi.org/10.1002/cpa.21964},
			DOI={10.1002/cpa.21964},
			number={1},
			journal={Communications on Pure and Applied Mathematics},
			publisher={Wiley},
			author={De Lellis, Camillo},
			author={Hirsch, Jonas},
			author={Marchese, Andrea},
			author={Stuvard, Salvatore},
			year={2020},
			month={Nov},
			pages={83–127}
		}
		
		\bib{DLHMS}{article}{
			AUTHOR = {De Lellis, Camillo},
			author = {Hirsch, Jonas},
			author = {Marchese, Andrea},
			author = {Stuvard, Salvatore},
			TITLE = {Regularity of area minimizing currents {${\rm mod}\,p$}},
			JOURNAL = {Geom. Funct. Anal.},
			FJOURNAL = {Geometric and Functional Analysis},
			VOLUME = {30},
			YEAR = {2020},
			NUMBER = {5},
			PAGES = {1224--1336},
			ISSN = {1016-443X},
			MRCLASS = {49Q15 (35B65 35J47 49N60 49Q05)},
			MRNUMBER = {4181825},
			MRREVIEWER = {Doan The Hieu},
			DOI = {10.1007/s00039-020-00546-0},
			URL = {https://doi.org/10.1007/s00039-020-00546-0},
		}
		
		\bib{DLHMSS_odd_moduli_p}{article}{
			title={Area minimizing hypersurfaces modulo $p$: a geometric free-boundary problem}, 
			author={Camillo De Lellis and Jonas Hirsch and Andrea Marchese and Luca Spolaor and Salvatore Stuvard},
			year={2021},
			eprint={2105.08135},
			archivePrefix={arXiv},
			primaryClass={math.AP}
		}
		
		\bib{DLHMSS_structure}{article}{
			title={Fine structure of the singular set of area minimizing hypersurfaces modulo $p$}, 
			author={Camillo De Lellis},
			author={Jonas Hirsch},
			author={Andrea Marchese},
			author={Luca Spolaor},
			author={Salvatore Stuvard},
			year={2022},
			eprint={2201.10204},
			archivePrefix={arXiv},
			primaryClass={math.AP}
		}
		
		\bib{DLHMSS_unique}{article}{
			title={Uniqueness of
				flat tangent cones for area minimizing currents mod 2Q in codimension 1}, 
			author={Camillo De Lellis},
			author={Jonas Hirsch},
			author={Andrea Marchese},
			author={Luca Spolaor},
			author={Salvatore Stuvard},
			year={Forthcoming},
		}

		\bib{DLMSV}{article}{
			AUTHOR = {De Lellis, Camillo},
			author = {Marchese, Andrea},
			author = {Spadaro, Emanuele},
			author = {Valtorta, Daniele},
			TITLE = {Rectifiability and upper {M}inkowski bounds for singularities
				of harmonic {$Q$}-valued maps},
			JOURNAL = {Comment. Math. Helv.},
			VOLUME = {93},
			YEAR = {2018},
			NUMBER = {4},
			PAGES = {737--779},
			ISSN = {0010-2571},
			DOI = {10.4171/CMH/449},
			URL = {https://doi.org/10.4171/CMH/449},
		}
		
		\bib{DLSk1}{article}{
			AUTHOR = {De Lellis, Camillo},
			author = {Skorobogatova, Anna},
			TITLE = {The fine structure of the singular set of area-minimizing integral currents I: the singularity degree of flat singular points},
			journal={arXiv preprint},
			year={2023}
		}
		
		\bib{DLSk2}{article}{
			AUTHOR = {De Lellis, Camillo},
			author = {Skorobogatova, Anna},
			TITLE = {The fine structure of the singular set of area-minimizing integral currents II: rectifiability of flat singular points with singularity degree larger than $1$},
			journal={arXiv preprint},
			year={2023}
		}
		
		
		\bib{DLS_MAMS}{article}{
			AUTHOR = {De Lellis, Camillo},
			author={Spadaro, Emanuele},
			TITLE = {{$Q$}-valued functions revisited},
			JOURNAL = {Mem. Amer. Math. Soc.},
			VOLUME = {211},
			YEAR = {2011},
			NUMBER = {991},
			PAGES = {vi+79},
			ISSN = {0065-9266},
			ISBN = {978-0-8218-4914-9},
			DOI = {10.1090/S0065-9266-10-00607-1},
			URL = {https://doi.org/10.1090/S0065-9266-10-00607-1},
		}
		
		\bib{DLS_multiple_valued}{article}{
			AUTHOR = {De Lellis, Camillo},
			author={Spadaro, Emanuele},
			TITLE = {Multiple valued functions and integral currents},
			JOURNAL = {Ann. Sc. Norm. Super. Pisa Cl. Sci. (5)},
			VOLUME = {14},
			YEAR = {2015},
			NUMBER = {4},
			PAGES = {1239--1269},
			ISSN = {0391-173X},
		}
		\bib{DLS14Lp}{article}{
			AUTHOR = {De Lellis, Camillo},
			author = {Spadaro, Emanuele},
			TITLE = {Regularity of area minimizing currents {I}: gradient {$L^p$}
				estimates},
			JOURNAL = {Geom. Funct. Anal.},
			VOLUME = {24},
			YEAR = {2014},
			NUMBER = {6},
			PAGES = {1831--1884},
			ISSN = {1016-443X},
			DOI = {10.1007/s00039-014-0306-3},
			URL = {https://0-doi-org.pugwash.lib.warwick.ac.uk/10.1007/s00039-014-0306-3},
		}
		
		\bib{DLS16centermfld}{article}{
			AUTHOR = {De Lellis, Camillo},
			author={Spadaro, Emanuele},
			TITLE = {Regularity of area minimizing currents {II}: center manifold},
			JOURNAL = {Ann. of Math. (2)},
			VOLUME = {183},
			YEAR = {2016},
			NUMBER = {2},
			PAGES = {499--575},
			ISSN = {0003-486X},
			DOI = {10.4007/annals.2016.183.2.2},
			URL = {https://0-doi-org.pugwash.lib.warwick.ac.uk/10.4007/annals.2016.183.2.2},
		}

		\bib{DLS16blowup}{article}{
			AUTHOR = {De Lellis, Camillo},
			author = {Spadaro, Emanuele},
			TITLE = {Regularity of area minimizing currents {III}: blow-up},
			JOURNAL = {Ann. of Math. (2)},
			VOLUME = {183},
			YEAR = {2016},
			NUMBER = {2},
			PAGES = {577--617},
			ISSN = {0003-486X},
			DOI = {10.4007/annals.2016.183.2.3},
			URL = {https://0-doi-org.pugwash.lib.warwick.ac.uk/10.4007/annals.2016.183.2.3},
		}

		\bib{Federer}{book}{
			AUTHOR = {Federer, Herbert},
			TITLE = {Geometric measure theory},
			SERIES = {Die Grundlehren der mathematischen Wissenschaften, Band 153},
			PUBLISHER = {Springer-Verlag New York Inc., New York},
			YEAR = {1969},
			PAGES = {xiv+676},
		}
		
		\bib{Federer1970}{article}{
			AUTHOR = {Federer, Herbert},
			TITLE = {The singular sets of area minimizing rectifiable currents with
				codimension one and of area minimizing flat chains modulo two
				with arbitrary codimension},
			JOURNAL = {Bull. Amer. Math. Soc.},
			VOLUME = {76},
			YEAR = {1970},
			PAGES = {767--771},
			ISSN = {0002-9904},
			DOI = {10.1090/S0002-9904-1970-12542-3},
			URL = {https://doi.org/10.1090/S0002-9904-1970-12542-3},
		}
		
		
		\bib{KW}{article}{
			title={Fine properties of branch point singularities: stationary two-valued graphs and stable minimal hypersurfaces near points of density $< 3$}, 
			author={Brian Krummel and Neshan Wickramasekera},
			year={2021},
			eprint={2111.12246},
			archivePrefix={arXiv},
			primaryClass={math.AP}
		}
		
		\bib{MW21}{article}{
			title={A Structure Theory for Stable Codimension 1 Integral Varifolds with Applications to Area Minimising Hypersurfaces mod p}, 
			author={Paul Minter}
			author={Neshan Wickramasekera},
			year={2021},
			eprint={2111.11202},
			archivePrefix={arXiv},
			primaryClass={math.DG}
		}
		
		\bib{NV_Annals}{article}{
			AUTHOR = {Naber, Aaron},
			author={Valtorta, Daniele},
			TITLE = {Rectifiable-{R}eifenberg and the regularity of stationary and
				minimizing harmonic maps},
			JOURNAL = {Ann. of Math. (2)},
			VOLUME = {185},
			YEAR = {2017},
			NUMBER = {1},
			PAGES = {131--227},
			ISSN = {0003-486X},
			DOI = {10.4007/annals.2017.185.1.3},
			URL = {https://doi.org/10.4007/annals.2017.185.1.3},
		}
		
		\bib{NV_varifolds}{article}{
			AUTHOR = {Naber, Aaron},
			author = {Valtorta, Daniele},
			TITLE = {The singular structure and regularity of stationary varifolds},
			JOURNAL = {J. Eur. Math. Soc. (JEMS)},
			VOLUME = {22},
			YEAR = {2020},
			NUMBER = {10},
			PAGES = {3305--3382},
			ISSN = {1435-9855},
			DOI = {10.4171/jems/987},
			URL = {https://doi.org/10.4171/jems/987},
		}
		
		\bib{Simon_GMT}{book}{
			AUTHOR = {Simon, Leon},
			TITLE = {Lectures on geometric measure theory},
			SERIES = {Proceedings of the Centre for Mathematical Analysis,
				Australian National University},
			VOLUME = {3},
			PUBLISHER = {Australian National University, Centre for Mathematical
				Analysis, Canberra},
			YEAR = {1983},
			PAGES = {vii+272},
			ISBN = {0-86784-429-9},
		}
		
		                \bib{Simon_rectifiability}{article}{
			    	AUTHOR = {Simon, Leon},
			    	TITLE = {Rectifiability of the singular sets of multiplicity {$1$}
				    		minimal surfaces and energy minimizing maps},
			    	BOOKTITLE = {Surveys in differential geometry, {V}ol. {II} ({C}ambridge,
				    		{MA}, 1993)},
			    	PAGES = {246--305},
			    	PUBLISHER = {Int. Press, Cambridge, MA},
			    	YEAR = {1995},
			    }
		
		    	\bib{Simon_cylindrical}{article}{
				author = {Leon Simon},
				title = {{Cylindrical tangent cones and the singular set of minimal submanifolds}},
				volume = {38},
				journal = {Journal of Differential Geometry},
				number = {3},
				publisher = {Lehigh University},
				pages = {585 -- 652},
				year = {1993},
				doi = {10.4310/jdg/1214454484},
				URL = {https://doi.org/10.4310/jdg/1214454484}
			}
		
		
		\bib{Spolaor_15}{article}{
			AUTHOR = {Spolaor, Luca},
			TITLE = {Almgren's type regularity for semicalibrated currents},
			JOURNAL = {Adv. Math.},
			VOLUME = {350},
			YEAR = {2019},
			PAGES = {747--815},
			ISSN = {0001-8708},
			DOI = {10.1016/j.aim.2019.04.057},
			URL = {https://doi.org/10.1016/j.aim.2019.04.057},
		}
		
		\bib{JTaylor}{article}{
			AUTHOR = {Taylor, Jean E.},
			TITLE = {Regularity of the singular sets of two-dimensional
				area-minimizing flat chains modulo {$3$} in {$R^{3}$}},
			JOURNAL = {Invent. Math.},
			FJOURNAL = {Inventiones Mathematicae},
			VOLUME = {22},
			YEAR = {1973},
			PAGES = {119--159},
			ISSN = {0020-9910},
			MRCLASS = {49F22},
			MRNUMBER = {333903},
			MRREVIEWER = {J. E. Brothers},
			DOI = {10.1007/BF01392299},
			URL = {https://doi.org/10.1007/BF01392299},
		}

		\bib{White86}{book}{
			AUTHOR = {White, Brian},
			TITLE = {A regularity theorem for minimizing hypersurfaces modulo
				{$p$}},
			BOOKTITLE = {Geometric measure theory and the calculus of variations
				({A}rcata, {C}alif., 1984)},
			SERIES = {Proc. Sympos. Pure Math.},
			VOLUME = {44},
			PAGES = {413--427},
			PUBLISHER = {Amer. Math. Soc., Providence, RI},
			YEAR = {1986},
			MRCLASS = {49F22},
			MRNUMBER = {840290},
			MRREVIEWER = {A. B. N\'{e}meth},
			DOI = {10.1090/pspum/044/840290},
			URL = {https://doi.org/10.1090/pspum/044/840290},
		}

		
		\bib{W14}{article}{
			AUTHOR = {Wickramasekera, Neshan},
			TITLE = {A general regularity theory for stable codimension 1 integral
				varifolds},
			JOURNAL = {Ann. of Math. (2)},
			FJOURNAL = {Annals of Mathematics. Second Series},
			VOLUME = {179},
			YEAR = {2014},
			NUMBER = {3},
			PAGES = {843--1007},
			ISSN = {0003-486X},
			MRCLASS = {49Q20 (49N60 58E05 58K99)},
			MRNUMBER = {3171756},
			MRREVIEWER = {S\l awomir Kolasi\'{n}ski},
			DOI = {10.4007/annals.2014.179.3.2},
			URL = {https://doi.org/10.4007/annals.2014.179.3.2},
		}
	\end{biblist}
\end{bibdiv}

\end{document}